\newenvironment{customthm}[1]
  {\innercustomthm}
  {\endinnercustomthm}
\newtheorem{theorem}{Theorem}[section]
\newtheorem{lemma}[theorem]{Lemma}
\newtheorem{prop}[theorem]{Proposition}
\theoremstyle{definition}
\newtheorem{definition}[theorem]{Definition}
\newtheorem{example}[theorem]{Example}
\newtheorem{remark}[theorem]{Remark}
\theoremstyle{definition}
	\crefname{app-corollary}{Corollary}{Corollaries}
	\Crefname{app-corollary}{Corollary}{Corollaries}
	\crefname{app-definition}{Definition}{Definitions}
	\Crefname{app-definition}{Definition}{Definitions}
	\crefname{figure}{Figure}{Figures}
	\Crefname{figure}{Figure}{Figures}
	\crefname{lemma}{Lemma}{Lemmata}
	\Crefname{lemma}{Lemma}{Lemmata}
	\crefname{app-lemma}{Lemma}{Lemmata}
	\Crefname{app-lemma}{Lemma}{Lemmata}
	\crefname{app-proposition}{Proposition}{Proposition}
	\Crefname{app-proposition}{Proposition}{Proposition}
	\crefname{app-theorem}{Theorem}{Theorems}
	\Crefname{app-theorem}{Theorem}{Theorems}
\newcommand\R{\mathbb{R}}
\newcommand\N{\mathbb{N}}
\newcommand{\set}[1]{\left\{ #1 \right\} }
\newcommand{\given}{\;|\;}
\renewcommand{\epsilon}{\varepsilon}
\definecolor{highlight}{HTML}{cdeef2}
\newcommand{\Ass}{\text{Ass}}
\newcommand{\Spec}{\text{Spec}}
\newcommand{\Min}{\text{Min}}
\renewcommand{\dim}{\mathrm{dim}}
\title{Controlling Formal Fibers of \\Countably Many Principal Prime Ideals}
\author{David Baron, Ammar Eltigani, S. Loepp, AnaMaria Perez, M. Teplitskiy}
\date{\today}
\begin{document}

\setlength{\parindent}{10 pt}
\setlength{\abovedisplayskip}{2pt}
\setlength{\belowdisplayskip}{2pt}
\setlength{\abovedisplayshortskip}{0pt}
\setlength{\belowdisplayshortskip}{0pt}

\setlength\fboxsep{10pt}

\maketitle

%%DO NOT EDIT THIS FILE

\begin{abstract}
    Let $T$ be a complete local (Noetherian) ring.  For each $i \in \mathbb{N}$, let $C_i$ be a nonempty countable set of nonmaximal pairwise incomparable prime ideals of $T$, and suppose that if $i \neq j$, then either $C_i = C_j$ or no element of $C_i$ is contained in an element of $C_j$. We provide necessary and sufficient conditions for $T$ to be the completion of a local integral domain $A$ satisfying the condition that, for all $i \in \mathbb{N}$, there is a nonzero prime element $p_i$ of $A$, such that $C_i$ is exactly the set of maximal elements of the formal fiber of $A$ at $p_iA$. We then prove related results where the domain $A$ is required to be countable and/or excellent.
    %We then prove a similar statement where $A$ is required to be countable. Finally, if $T$ contains the rationals, we prove similar statements where $A$ satisfies the additional condition that it is excellent. 
\end{abstract}

\section{Introduction}
% We call a local ring $A$ a \textit{precompletion} of a complete local ring $T$ if $\widehat{A}\cong T$. A given complete local ring $T$ may have numerous precompletions exhibiting different algebraic properties. In fact, there are known results that show the existence of precompletions of $T$ with some set of desired properties. However, since these results are not constructive, they do not provide insight on the prime spectra of a given precompletion.

%Although Cohen's Structure Theorem fully characterizes complete local (Noetherian) rings, the structure of a local ring $A$ that is not complete remains elusive. A standard technique used to understand a given local ring $A$ with maximal ideal $M$ is to study the properties it shares with its $M$-adic completion. In fact, given a complete local ring $T$, there are results in the literature that provide necessary and sufficient conditions for the existence of a local ring $A$ with desirable algebraic properties (e.g. integral domain, unique factorization domain) such that the completion of $A$ with respect to its maximal ideal is $T$. However, given a complete local ring $T$, the techniques used to construct such an $A$ are often not constructive, and, when this is the case, the relationship between the $A$ and $T$ is not well understood.

Completions of local (Noetherian) rings have proven to be a useful tool in commutative algebra. Unfortunately, not all aspects of the relationship between a local ring and its completion with respect to its maximal ideal are well understood. In this paper, we are interested in gaining a better understanding of the relationship between the prime ideals of a local ring and the prime ideals of its completion. 

Recall that the formal fiber of a local ring $A$ at a prime ideal $P$ of $A$ is defined to be $\Spec(T \otimes_A k(P))$ where $k(P)= A_P/PA_P$ and $T$ is the completion of $A$ with respect to its maximal ideal. We note that there is a one-to-one correspondence between the formal fiber of $A$ at $P$ and the prime ideals $Q$ of $T$ satisfying $Q \cap A = P$.  Thus, it is often useful to think of the formal fiber of $A$ at $P$ as the set of prime ideals of the completion of $A$ that lie over $P$, and this is how we will think of the formal fiber of $A$ at $P$ for the remainder of this paper. Since $T$ is a faithfully flat extension of $A$, the map $\Spec(T) \longrightarrow \Spec(A)$ given by $Q \longrightarrow Q \cap A$ is onto. It follows that every prime ideal of $T$ is in the formal fiber of some prime ideal of $A$. Hence, the formal fibers of a local ring $A$ form a partition of the prime ideals of $T$. We aim to understand which partitions of the prime ideals of $T$ are possible. More specifically, we are interested in the following question.

\medskip

\noindent {\bf Question:} Let $T$ be a complete local ring and let $\mathcal{C} = \{C_{\alpha}\}_{\alpha \in \Omega}$ be a partition of $\Spec(T)$. Under what conditions does there exist a local ring $A$ such that the completion of $A$ with respect to its maximal ideal is $T$ and such that, for all $C_{\alpha} \in \mathcal{C}$, there exists a prime ideal $P_{\alpha}$ of $A$ such that $C_{\alpha}$ is exactly the formal fiber of $A$ at $P_{\alpha}$?

\medskip

\noindent In other words, we ask, given a complete local ring $T$, under what conditions is it possible to find a local ring $A$ with completion $T$ such that all formal fibers of $A$ are controlled? We believe that answering this question is very difficult, and so we focus on controlling formal fibers of specific prime ideals of $A$. For example, one could ask how well the formal fibers of minimal prime ideals can be controlled.  The results in \cite{Arnosti} give insight into the answer to this question.  In this paper, we are interested in how well formal fibers of height one prime ideals can be controlled, furthering previous work on the topic (see, for example, \cite{Chatlos} and \cite{Boocher}).
In particular, in \cite{Chatlos}, it is shown that the formal fiber of exactly one height one prime ideal can be controlled. 
 We extend this result by showing that the formal fibers of countably many height one prime ideals can be controlled.
 %Chatlos et al. provide the necessary and sufficient conditions needed to glue a finite set of incomparable prime ideals of a complete local ring $T$ to a principally generated prime ideal in a local domain $A$ whose completion is $T$.
%We extend the results of \cite{Chatlos} by proving the existence of a local domain $A$ with completion $T$ in which countably many sets of prime ideals are glued, each to a different principally generated prime ideal in $A$.
Specifically, in Section \ref{uncountable section}, we prove the following result.
\begin{customthm}{\ref{big theorem}}
%Let $(T,M)$ be a complete local ring. Let $\Pi$ denote the prime subring, $\Phi$ a countable index set, and $C_i \subsetneq \Spec(T)$ for all $i \in \Phi$ such that the elements in $C:=\bigcup_{i \in \Phi} C_i$ are non-maximal and pairwise incomparable in $T$. Then, there exists a local domain $A$ and distinct elements $q_i \in A$ for all $i \in \Phi$ such that $\widehat{A} \cong T$ and $q_iA$ is a prime ideal in $A$ whose formal fiber is semi-local with maximal ideals the elements of $C_i$ for all $i \in \Phi$ if and only if:
%\begin{enumerate}[(i)]
%        \item There exist nonzero elements $q_i \in T$ such that $q_i \in \bigcap_{Q \in C_i}Q \setminus \bigcup_{Q' \in (C \setminus C_i)}Q'$ for all $i \in \Phi$,
%        \item $P \cap \Pi[q_1,\dots, q_i,\dots]=(0)$ for all $P \in \Ass(T)$,
%        \item For each $i \in \Phi$, if $P' \in \Ass(T/q_iT)$, then $P' \subseteq Q$ for some $Q \in C_i$, and
%        \item If $i \in \Phi$, $F_{\Pi[q_1,\dots, q_i,\dots]}\cap Q \subseteq q_iT$ for all $Q \in C_i$.
%    \end{enumerate}
Let $T$ be a complete local ring and let $\Pi$ denote the prime subring of $T$. For each $i \in \mathbb{N}$, let $C_i$ be a nonempty countable set of nonmaximal pairwise incomparable prime ideals of $T$ and suppose that, if $i \neq j$, then either $C_i = C_j$ or no element of $C_i$ is contained in an element of $C_j$. Then $T$ is the completion of a local domain $A \subseteq T$ satisfying the condition that, for all $i \in \mathbb{N}$, there is a nonzero prime element $p_i$ of $A$ such that $C_i$ is exactly the set of maximal elements of the formal fiber of $A$ at $p_iA$ if and only if there exists a set of nonzero elements $\mathfrak{q} = \{q_i\}_{i = 1}^{\infty}$ of $T$ satisfying the following conditions
\begin{enumerate}[(i)]
        \item For $i \in \mathbb{N}$ we have $q_i \in \bigcap_{Q \in C_i}Q$ and if $C_j \neq C_i$ and $Q' \in C_j$, then $q_i \not\in Q'$,
        \item $P \cap \Pi[\mathfrak{q}]=(0)$ for all $P \in \Ass(T)$,
        \item If $i \in \mathbb{N}$ and $P' \in \Ass(T/q_iT)$, then $P' \subseteq Q$ for some $Q \in C_i$, and
        \item If $i \in \mathbb{N}$ and $Q \in C_i$, then $F_{\Pi[\mathfrak{q}]}\cap Q \subseteq q_iT$ where $F_{\Pi[\mathfrak{q}]}$ is the quotient field of $\Pi[\mathfrak{q}]$.
    \end{enumerate}
%\end{theorem}
\end{customthm}

\noindent Showing that the above four conditions are sufficient is much more difficult than showing that they are necessary, and so, the bulk of our work will be showing that the conditions are sufficient. To do this, we start with a complete local ring $T$, the sets $C_i$ satisfying our hypotheses, and a set of elements $\mathfrak{q} = \{q_i\}_{i = 1}^{\infty}$ of $T$ satisfying the four conditions. We then construct a local domain $A$ that satisfies the properties given in Theorem \ref{big theorem}. We define the notion of a $Ct\mathfrak{q}$-subring of $T$ which will aid in our construction of $A$. We end Section 2 by showing these conditions are in fact necessary, thereby completing the proof of our theorem. In Section 3, we show that we can have some control over formal fibers of countably many height one prime ideals of \textit{countable} domains, and in
%the analagous result where we require the local domain to be \textit{countable}, and in  
%By adding the condition that $T/M$ is countable, we prove these conditions to be necessary and sufficient by constructing another local domain.
Section 4, we show that we can have some control over formal fibers of countably many height one prime ideals of quasi-excellent and excellent domains. 
%In particular, we show that our constructions from Sections 2 and 3 can be done while maintaining quasi-excellence and excellence. 
In Section 5, we combine the results from Section 3 and Section 4 to show that we can have some control over formal fibers of countably many height one prime ideals of countable quasi-excellent domains and of countable excellent domains.

Throughout the paper, $\N$ will be assumed to be the positive integers. Moreover, we say a ring $R$ is quasi-local if it has unique maximal ideal $M$, whilst $R$ is local if it is both quasi-local and Noetherian. We let $(R,M)$ denote a quasi-local ring $R$ with maximal ideal $M$, and we denote the completion of a local ring $R$ with respect to its maximal ideal by $\widehat{R}$. We call a local ring $A$ a \textit{precompletion} of a complete local ring $T$ if $\widehat{A}\cong T$. When we say that $C$ is a set of incomparable (or pairwise incomparable) prime ideals of a ring $R$, we mean that for all pairs of prime ideals $P,P' \in C$, we have $P \not\subseteq P'$. Finally, if $R$ is an integral domain, we use $F_R$ to donote the quotient field of $R$.

\section{The Main Theorem}\label{uncountable section}
The goal of this section is to prove Theorem \ref{big theorem}. As mentioned in the previous section, most of our work is dedicated to showing that the four conditions given in Theorem \ref{big theorem} are sufficient. To show that they are sufficient, we start with a complete local ring $T$, the sets $C_i$, and the set of elements $\mathfrak{q}$ of $T$ satisfying the four conditions of the theorem. We then adjoin the set $\mathfrak{q}$ to the prime subring of $T$. The next step is to carefully successively adjoin elements to this ring to create an ascending chain of subrings of $T$ with each subring satisfying very specific properties. The union of these subrings will be the desired precompletion of $T$. 
%We show that the conditions on our subrings ensure that the formal fibers of the prime ideals of $A$ at $q_iA$ have the desired form.

\subsection{Preliminaries}
We begin with preliminary results that will help with the construction of our precompletion. Cardinality arguments will play a central role in our construction. The following proposition, taken from \cite{Dundon}, will be used for some of these cardinality arguments. 

%At some points, we will need to make certain cardinality arguments about rings of the form $T/P$ where $P$ is some non-maximal prime ideal in $T$. Therefore, we include the following proposition from \cite{Barrett}.

\begin{prop} \label{Yu 2.10}
    (\cite{Dundon}, Lemma 2.2) Let $(T,M)$ be a complete local ring with $\dim(T) \geq 1$. Let $P$ be a nonmaximal prime ideal of $T$. Then, $|T/P| = |T| \geq c$, where $c$ denotes the cardinality of $\R$.
\end{prop}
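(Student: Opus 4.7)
The plan is to show that for any complete local Noetherian ring $(S,N)$ of dimension at least $1$ with residue field $k$, one has $|S|=|k|^{\aleph_0}\ge c$. Applied to both $T$ and $T/P$ (the latter is complete local Noetherian of dim $\ge 1$ since $P$ is nonmaximal, with the same residue field $k=T/M$), this would immediately yield both $|T|\ge c$ and $|T/P|=|T|$.

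For the lower bound $|S|\ge c$, I would use the standard non-nilpotent-element trick. Since $\dim(S)\ge 1$, some minimal prime $\mathfrak{p}\subsetneq N$ exists, so any $x\in N\setminus\mathfrak{p}$ is non-nilpotent. For each $A\subseteq\N$, the sum $\sum_{n\in A}x^n$ converges in $S$ by completeness; these sums are pairwise distinct, since equality with least differing index $m$ would give $x^m(1-ux)=0$ for some $u\in S$, and since $1-ux$ is a unit in the local ring $S$, this would force $x^m=0$, contradicting non-nilpotence. Hence $|S|\ge 2^{\aleph_0}=c$.

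To upgrade to $|S|=|k|^{\aleph_0}$, Cohen's structure theorem supplies both bounds. For the upper bound, $S$ is a quotient of a power series ring $V[[x_1,\ldots,x_n]]$ where $V$ is a coefficient ring of $S$ (either $k$ in equi-characteristic, or a complete DVR with residue field $k$ in mixed characteristic), so $|S|\le|V|^{\aleph_0}=|k|^{\aleph_0}$. For the matching lower bound, in mixed characteristic $V\hookrightarrow S$ already gives $|S|\ge|V|=|k|^{\aleph_0}$; in equi-characteristic $V=k$ gives only $|S|\ge|k|$, so one extends to $k[[y]]\hookrightarrow S$ via $y\mapsto x$ (for a non-nilpotent $x\in N$), and injectivity follows from the DVR structure of $k[[y]]$: a nonzero kernel would be $(y^N)$, forcing $x^N=0$ and contradicting non-nilpotence. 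Applying this to both $T$ and $T/P$ pins down both cardinalities as $|k|^{\aleph_0}$.

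The main obstacle is the equi-characteristic lower bound $|S|\ge|k|^{\aleph_0}$ when $|k|>\aleph_0$, where one must upgrade the trivial embedding $k\hookrightarrow S$ to a power-series embedding $k[[y]]\hookrightarrow S$; this is the only step that genuinely requires more than the non-nilpotent element trick, although it reduces routinely to the fact that a DVR has a very restrictive ideal lattice.
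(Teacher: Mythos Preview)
The paper does not give its own proof of this proposition; it is simply quoted from \cite{Dundon} (Lemma 2.2) and used as a black box. So there is nothing in the paper to compare your argument against.

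Your proposal is essentially correct and is a standard way to prove this fact. One small gap: in your case split for the lower bound $|S|\ge |k|^{\aleph_0}$, ``mixed characteristic'' tacitly assumes $\operatorname{char} S=0$, so that the coefficient ring $V$ is a complete DVR with $|V|=|k|^{\aleph_0}$. If instead $\operatorname{char} S=p^n$ with $n\ge 2$, then $V$ is Artinian and $|V|$ is only $|k|$ (or $|k|^n$), so the inequality $|S|\ge |V|=|k|^{\aleph_0}$ fails as written. This is easy to patch in either of two ways. First, you can pass to $S_{\mathrm{red}}$, which is equi-characteristic $p$ with the same dimension and residue field, apply your $k[[y]]\hookrightarrow S_{\mathrm{red}}$ argument there, and use the surjection $S\twoheadrightarrow S_{\mathrm{red}}$ to get $|S|\ge |S_{\mathrm{red}}|\ge |k|^{\aleph_0}$. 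Second, and more directly for the proposition itself, note that $T/P$ is a domain, hence has characteristic $0$ or a prime, so the edge case never arises for $T/P$; your argument then gives $|T/P|=|k|^{\aleph_0}$ cleanly, the surjection $T\twoheadrightarrow T/P$ gives $|T|\ge |T/P|$, and your Cohen upper bound gives $|T|\le |k|^{\aleph_0}=|T/P|$.
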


%We are now ready to state some of the results that are central to our construction. 
Recall that, given a complete local ring $T$, one of the conditions we want our local domain $A$ to satisfy is that its completion is $T$.  The following proposition from \cite{HEITMANN1994} provides conditions for a quasi-local subring of $T$ to be Noetherian and have $T$ as its completion. 

\begin{prop} \label{completion proving machine}
    (\cite{HEITMANN1994}, Proposition 1)  If $(R, R \cap M )$ is a quasi-local subring of a complete local ring $(T,M)$, the map $R \rightarrow T/M^2$ is onto, and $IT \cap R = I$ for every finitely generated ideal $I$ of $R$, then $R$ is Noetherian and the natural homomorphism $\widehat{R} \rightarrow T$ is an isomorphism.
\end{prop}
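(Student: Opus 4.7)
The plan is to establish four claims in order: (a) $R$ is Noetherian; (b) $\mathfrak{m} := R \cap M$ satisfies $\mathfrak{m} T = M$; (c) the natural maps $R/\mathfrak{m}^n \to T/M^n$ are isomorphisms for every $n \geq 1$; and (d) passing to inverse limits yields $\widehat{R} \cong T$. For (a), let $J$ be any ideal of $R$. Since $T$ is Noetherian, $JT$ has finitely many generators, which I may take to lie in $J$, say $j_1, \ldots, j_k$. Set $I = (j_1, \ldots, j_k) \subseteq J$, so $IT = JT$; then $J \subseteq JT \cap R = IT \cap R = I$ by hypothesis, so $J = I$ is finitely generated.

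For (b), the hypothesis that $R \to T/M^2$ is surjective gives $M \subseteq R + M^2$, and intersecting with $M$ gives $M \subseteq \mathfrak{m} + M^2 \subseteq \mathfrak{m} T + M^2$. Multiplying by $M$ and substituting iteratively gives $M \subseteq \mathfrak{m} T + M^n$ for every $n$. Since $\mathfrak{m} T$ is a finitely generated ideal of the Noetherian ring $T$ by (a), Krull's intersection theorem applied in $T/\mathfrak{m} T$ gives $\bigcap_n (\mathfrak{m} T + M^n) = \mathfrak{m} T$, so $M = \mathfrak{m} T$, and hence $M^n = \mathfrak{m}^n T$ for every $n$.

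For (c), the map $R/\mathfrak{m}^n \to T/M^n$ has kernel $(R \cap M^n)/\mathfrak{m}^n = (R \cap \mathfrak{m}^n T)/\mathfrak{m}^n$, which vanishes by the contraction hypothesis applied to the finitely generated ideal $I = \mathfrak{m}^n$. Surjectivity is proved by induction on $n$, the cases $n = 1, 2$ being immediate from the hypotheses. Given surjectivity at stage $n$ and any $t \in T$, I pick $r \in R$ with $t - r \in M^n = \mathfrak{m}^n T$ and write $t - r = \sum_i v_i \tau_i$ with $v_i \in \mathfrak{m}^n$ and $\tau_i \in T$; approximating each $\tau_i$ by some $\rho_i \in R$ with $\tau_i - \rho_i \in M$ yields $v_i \tau_i \equiv v_i \rho_i \pmod{M^{n+1}}$, so $t \equiv r + \sum_i v_i \rho_i \pmod{M^{n+1}}$ with the right-hand side in $R$.

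Finally, (d) follows at once from (c): the isomorphisms $R/\mathfrak{m}^n \cong T/M^n$ are compatible with the projections, so $\widehat{R} = \varprojlim R/\mathfrak{m}^n \cong \varprojlim T/M^n = T$, using that $T$ is already complete. I expect the main obstacle to be (b): converting the weak approximation hypothesis ``$R$ hits all of $T$ modulo $M^2$'' into the strong ideal-theoretic conclusion $\mathfrak{m} T = M$ forces one to first dispatch (a) so that $\mathfrak{m} T$ is finitely generated and Krull's intersection theorem becomes applicable. Once (b) is in hand, the inductive step in (c) is essentially routine.
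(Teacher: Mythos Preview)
The paper does not supply its own proof of this proposition; it is quoted verbatim from Heitmann's 1994 paper and used as a black box throughout. So there is no in-paper argument to compare against.

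Your proof is correct and is essentially the standard argument. A few minor comments: in (b), the appeal to (a) to justify that $\mathfrak{m} T$ is finitely generated is unnecessary, since $T$ is already Noetherian by hypothesis; what you actually need from (a) in part (c) is that $\mathfrak{m}^n$ is a finitely generated ideal of $R$, so that the contraction hypothesis applies to it. The iterative substitution in (b) could be stated a touch more explicitly (from $M \subseteq \mathfrak{m} T + M^2$ one gets $M^2 \subseteq M\mathfrak{m} T + M^3 \subseteq \mathfrak{m} T + M^3$, hence $M \subseteq \mathfrak{m} T + M^3$, and so on), but the idea is clear. The inductive step in (c) is clean and correct.
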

 
Our final local domain $A$ will be a subring of $T$ that satisfies the conditions of Proposition \ref{completion proving machine}. To ensure that $A$ satisfies these conditions, we repeatedly use the following result from \cite{Vu}. Note that Proposition \ref{stronger coset avoidance} can be thought of as a generalization of the countable prime avoidance theorem for complete local rings.

%, we inductively build a ascending chain of quasi-local subrings $R$ of $T$, and at each step, we check that that each ring satisfies the conditions from Proposition \ref{completion proving machine}. The following lemma is a generalization of the Prime Avoidance Theorem, which is essential to make the desired map onto.

%%% Coset avoidance
\begin{prop}\label{stronger coset avoidance}
    (\cite{Vu}, Lemma 2.7) Let $(T,M)$ be a complete local ring such that $\dim (T) \geq 1,$ let $C$ be a countable set of incomparable nonmaximal prime ideals of $T$ and let $D$ be a subset of $T$ such that $|D| < |T|$. Let $I$ be an ideal of $T$ such that $I\nsubseteq P$ for all $P\in C$. Then $I\nsubseteq \bigcup\set{r+P\given r\in D,P\in C}$.
\end{prop}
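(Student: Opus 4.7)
The plan is to build an element $x \in I$ avoiding every coset $r+P$ in the union by constructing it as an $M$-adically convergent series in $T$, adding one ``correction'' per prime $P_n \in C$. The essential tool is Proposition \ref{Yu 2.10}: for each $P \in C$, the ring $T/P$ is a complete local domain of dimension $\geq 1$, so $|T/P| = |T|$, and any nonzero ideal of $T/P$ has cardinality $|T|$ (pick a nonzero element $\bar{a}$; then $\bar{a}T/P$ is in bijection with $T/P$), which is strictly larger than $|D|$.

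Enumerate $C = \{P_1, P_2, \dots\}$ (the finite case is analogous and strictly simpler). First, since $(I+P_1)/P_1$ is a nonzero ideal of $T/P_1$ of cardinality $|T| > |D|$, choose $x_1 \in I$ with $x_1 \not\equiv r \pmod{P_1}$ for every $r \in D$. Inductively, suppose $x_{n-1} \in I$ has been constructed avoiding $r + P_i$ for all $r \in D$ and all $i < n$. Set $J_n := I \cap P_1 \cap \cdots \cap P_{n-1}$. Incomparability of $C$ together with the prime avoidance principle $\prod_{i<n} P_i \subseteq P_n \Rightarrow P_i \subseteq P_n$ for some $i$ gives $\bigcap_{i<n} P_i \not\subseteq P_n$; combined with $I \not\subseteq P_n$ and primality of $P_n$, this yields $J_n \not\subseteq P_n$. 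Nonmaximality of $P_n$ forces $M^n \not\subseteq P_n$ (else $M \subseteq P_n$), whence $J_n \cap M^n \supseteq J_n \cdot M^n \not\subseteq P_n$. Thus the image of $J_n \cap M^n$ in $T/P_n$ is a nonzero ideal of cardinality $|T| > |D|$, so one can pick $\delta_n \in J_n \cap M^n$ so that $x_n := x_{n-1} + \delta_n$ satisfies $x_n \not\equiv r \pmod{P_n}$ for every $r \in D$. Because $\delta_n \in P_i$ for all $i < n$, the earlier avoidance conditions modulo $P_i$ are preserved.

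Since each $\delta_n \in M^n$ and $T$ is $M$-adically complete, the series $x := x_1 + \sum_{n \geq 2} \delta_n$ converges in $T$; and $I$ is closed in the $M$-adic topology (as an ideal of a Noetherian complete local ring), so $x \in I$. For any fixed $n$, the tail $\sum_{m>n} \delta_m$ lies in $P_n$ (each summand does, and $P_n$ is closed), hence $x \equiv x_n \pmod{P_n}$ and $x \notin r + P_n$ for every $r \in D$. This yields the desired element $x \in I \setminus \bigcup\{r+P : r \in D,\, P \in C\}$.

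The main obstacle is the simultaneous management of two competing requirements at each inductive step: the correction $\delta_n$ must sit deep enough in the $M$-adic filtration (taking $\delta_n \in M^n$ suffices) to secure convergence, yet its image in $T/P_n$ must live in an ideal large enough to dodge the bad cosets. The nonmaximality of each $P \in C$ is precisely what makes these compatible, since it guarantees $M^n \not\subseteq P_n$, while the incomparability of $C$ is what lets prime avoidance secure $\bigcap_{i<n} P_i \not\subseteq P_n$.
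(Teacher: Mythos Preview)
The paper does not supply its own proof of this proposition; it is quoted from \cite{Vu} without argument. Your proof is correct and follows the standard technique for such coset-avoidance statements over complete local rings: build the element as an $M$-adic limit, at stage $n$ exploiting $|T/P_n| = |T| > |D|$ (via Proposition~\ref{Yu 2.10}) to dodge all forbidden cosets modulo $P_n$, while taking the correction $\delta_n$ inside $I \cap M^n \cap P_1 \cap \cdots \cap P_{n-1}$ so that convergence is guaranteed and the avoidances already secured modulo $P_1,\ldots,P_{n-1}$ are preserved. All the supporting claims (that $J_n \cap M^n \not\subseteq P_n$ via primality, that nonzero ideals of the domain $T/P_n$ have cardinality $|T|$, and that ideals of $T$ are $M$-adically closed) are handled correctly. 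This is the same approach used in the source \cite{Vu}, so your argument matches the cited proof in substance.
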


%Note that Proposition \ref{stronger coset avoidance} only applies whenever $\dim(T) \geq 1$. However, if $\dim(T) = 0$, then $T$ is a field and the unique precompletion of $T$ is $T$ itself.

The next four lemmas are useful results regarding subrings of Noetherian rings where the subrings are not assumed to be Noetherian.

\begin{lemma}\label{factoring}
    Let $T$ be a ring and let $R$ be a subring of $T$ that contains no zerodivisors of $T$. Let $q$ be a nonzero element of $R$ such that $qT\cap R = qR$. Then for every $\ell \in \mathbb{N}$, we have $(qT)^\ell\cap R = (qR)^\ell$. 
    %Moreover, if $a$ is a nonzero element of $R$ with $a\in qR$, then there is a positive integer $k$ such that $a\in q^kR$ and $a\not\in q^{k+ 1}R$.
\end{lemma}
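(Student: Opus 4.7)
My plan is to proceed by induction on $\ell$. The base case $\ell = 1$ is exactly the given hypothesis $qT \cap R = qR$, and the reverse inclusion $(qR)^\ell \subseteq (qT)^\ell \cap R$ is trivial for every $\ell$ since $q^\ell \in R \subseteq T$. So the real content is the forward inclusion $(qT)^\ell \cap R \subseteq (qR)^\ell$ in the inductive step.

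Assume inductively that $q^\ell T \cap R = q^\ell R$, and take any $x \in q^{\ell+1}T \cap R$. Since $q^{\ell+1}T \subseteq q^\ell T$, the element $x$ lies in $q^\ell T \cap R$, which by the inductive hypothesis equals $q^\ell R$. Write $x = q^\ell r$ for some $r \in R$, and also $x = q^{\ell+1} t$ for some $t \in T$. Subtracting gives $q^\ell(r - qt) = 0$ in $T$. Here is where I would use the hypothesis on $R$: since $q \in R$ and $R$ contains no zerodivisors of $T$, the element $q$ is a non-zerodivisor in $T$, and therefore so is $q^\ell$. Cancelling $q^\ell$ yields $r = qt$ in $T$, so $r \in qT \cap R$. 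Applying the base case hypothesis, $r \in qR$, say $r = qs$ with $s \in R$. Then $x = q^\ell r = q^{\ell+1} s \in q^{\ell+1} R$, completing the induction.

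There is no real obstacle here. The only subtle point is recognizing that the hypothesis ``$R$ contains no zerodivisors of $T$'' is invoked in a specific way, namely to guarantee that $q^\ell$ is a non-zerodivisor of $T$ and therefore cancels from the equation $q^\ell(r - qt) = 0$ considered in $T$; this is what lets us promote membership in $q^\ell R$ to a factorization of the form $q^\ell \cdot (qR)$.
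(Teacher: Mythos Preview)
Your proof is correct and follows essentially the same induction as the paper. The only cosmetic difference is the order: the paper first applies the base case $qT\cap R=qR$ to write $x=qr$ and then uses the inductive hypothesis to place $r$ in $(qR)^\ell$, whereas you first apply the inductive hypothesis to write $x=q^\ell r$ and then the base case to place $r$ in $qR$; the cancellation of powers of $q$ via regularity is the same in both.
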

\begin{proof}
    We induct on $\ell$. By hypothesis, the statement holds for $\ell = 1$. Assume $(qT)^\ell\cap R = (qR)^\ell$. Let $x \in (qT)^{\ell + 1}\cap R$ and write $x=q(q^{\ell}t)$ for some $t \in T$. Then $x \in qT \cap R = qR$, and so $x = qr$ for some $r \in R$. Thus $q(q^{\ell}t) = qr$ and since $q$ is not a zerodivisor, we have $r = q^{\ell}t \in (qT)^\ell\cap R = (qR)^\ell$. It follows that $x = q(q^{\ell}t) \in (qR)(q R)^\ell = (q R)^{\ell+ 1}$. Therefore, $(qT)^{\ell+ 1}\cap R \subseteq (qR)^{\ell+ 1}$.  Since the reverse inclusion holds, the result follows.
\end{proof}

%Our goal is to make the map $R \rightarrow T/M^2$ surjective for each $R$ in the ascending chain. To construct a quasi-local subring $R$ of $T$ with the desired property, we adjoin a carefully chosen element of $T$ to a base subring $R'$ of $T$. We chose these elements such that they are transcendental over both $R'$ and $R'/(R'\cap Q)$ for some $Q$ that lies over a principal ideal in $R'$. The following lemma shows that a transcendental element over $R'/(R'\cap Q)$ is necessarily transcendental over $R'$.

\begin{lemma}\label{largestk}
Let $(T,M)$ be a local ring and let $R$ be a subring of $T$.  Suppose $q$ and $a$ are elements of $R$ such that $a \neq 0$ and $q \in M$. If $a\in qR$, then there is a positive integer $k$ such that $a\in q^kR$ and $a\not\in q^{k+ 1}R$.
\end{lemma}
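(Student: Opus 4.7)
The plan is to argue by contradiction and invoke Krull's intersection theorem. Let $S = \{k \in \mathbb{N} : a \in q^k R\}$. By hypothesis, $1 \in S$, so $S$ is nonempty. I want to show $S$ is bounded above, so that it has a maximum, which will be the desired $k$.

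Suppose, for the sake of contradiction, that $S$ is unbounded. Since $S$ is closed under taking smaller positive integers (if $a \in q^{k+1} R$, writing $a = q^{k+1} r$ shows $a \in q^k R$), this means $a \in q^k R$ for every $k \in \mathbb{N}$. Then for every $k$, I have $a \in q^k R \subseteq q^k T \subseteq M^k$, using that $q \in M$. Since $T$ is local (hence Noetherian, by the convention set in the introduction), Krull's intersection theorem gives $\bigcap_{k \geq 1} M^k = (0)$ in $T$. Therefore $a = 0$, contradicting the hypothesis that $a \neq 0$.

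Hence $S$ is bounded above. Since $S$ is a nonempty bounded subset of $\mathbb{N}$, it has a maximum $k$, and this $k$ has exactly the two properties we want: $a \in q^k R$ (because $k \in S$) and $a \notin q^{k+1} R$ (because $k+1 \notin S$).

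There is no real obstacle here; the only subtlety worth flagging is the convention that the word \emph{local} includes the Noetherian hypothesis on $T$, which is what licenses the use of Krull's intersection theorem. Note that we do not need $R$ itself to be Noetherian, which is important since in the paper $R$ will often be a non-Noetherian subring of $T$.
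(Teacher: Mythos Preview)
Your proof is correct and follows essentially the same approach as the paper: argue by contradiction, deduce that $a \in q^kR \subseteq M^k$ for all $k$, and apply Krull's intersection theorem in the Noetherian ring $T$ to conclude $a=0$. Your write-up is slightly more detailed in explicitly noting that the set $S$ is downward closed, which the paper leaves implicit.
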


\begin{proof}
Assume that no such postive integer exists.  Then $a \in \bigcap_{n = 1}^{\infty} (qR)^n \subseteq \bigcap_{n = 1}^{\infty} (qT)^n \subseteq \bigcap_{n = 1}^{\infty} (M)^n.$ Since $T$ is Noetherian, Krull's intersection theorem gives us that $\bigcap_{n = 1}^{\infty} (M)^n = (0)$.  It follows that $a = 0$, a contradiction.
\end{proof}

\begin{lemma}\label{krull's intersection}
    Let $(T,M)$ be a local ring and let $(R,R \cap M)$ be a quasi-local subring of $T$ that contains no zerodivisors of $T$. If $y$ is a nonzero element of $R$, then $y$ is contained in at most finitely many principal prime ideals of $R$.
\end{lemma}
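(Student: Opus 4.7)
The plan is a proof by contradiction: suppose $y$ lies in infinitely many distinct principal prime ideals $p_1R, p_2R, \ldots$ of $R$. For each $i$, since $p_i$ is a nonzero element of $R$ (hence a non-zerodivisor of $T$), Krull's principal ideal theorem applied in the Noetherian ring $T$ forces every minimal prime of $p_iT$ to have height exactly one; and because $yT \subseteq p_iT$, each such minimal prime contains — and, by equal heights, must equal — a minimal prime of $yT$. The set of minimal primes of $yT$ is finite, so each $p_i$ lies in one of finitely many height-one primes. A first pigeonhole step lets us pass to an infinite subsequence whose $p_i$ all lie in a single minimal prime $Q$ of $yT$.

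Next, localize at $Q$. The ring $T_Q$ is one-dimensional Noetherian local, and since $y$ is a non-zerodivisor of $T_Q$ with $\sqrt{yT_Q}=QT_Q$, the quotient $T_Q/yT_Q$ has finite length as a $T_Q$-module. Consequently the ideals of $T_Q$ containing $yT_Q$ form a finite set, and each $p_iT_Q$ lies in this set. A second pigeonhole step yields a further infinite subsequence on which all $p_iT_Q$ equal a common ideal $J = p_1T_Q$; this gives relations $p_i = p_1u_i$ with $u_i$ a unit of $T_Q$.

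The delicate final step is to promote $p_i = p_1u_i$ from $T_Q$ back to $p_iR = p_1R$ in $R$. Setting $P := Q \cap R$, the inclusion $R_P \hookrightarrow T_Q$ holds (because $R$ contains no zerodivisors of $T$, a property preserved under localization), and its maximal ideal $PR_P$ equals $QT_Q \cap R_P$; hence any unit of $T_Q$ that happens to lie in $R_P$ is automatically a unit of $R_P$, so $u_i \in R_P$ would give $p_iR_P = p_1R_P$ and, on contracting back to $R$, the contradiction $p_iR = p_1R$. To force $u_i$ into $R_P$, invoke Lemma \ref{largestk} to write $y = p_i^{k_i}s_i$ with $s_i \notin p_iR$; the sequence $\{k_i\}$ must be bounded (else $y \in \bigcap_n Q^n = 0$ by Krull's intersection theorem in the Noetherian local $T$), so a third pigeonhole step makes $k_i = k$ constant. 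The multiplicativity $p_i^k s_i = p_1^k s_1 = y$ together with $p_iT_Q = p_1T_Q$ then forces $s_iT_Q = s_1T_Q$, so $u_i^{-k} = s_i/s_1$ in $T_Q$. In the favorable case $s_1 \notin P$, we get $u_i^{-k} \in R_P^\times$; then $p_i^k = p_1^k u_i^k$ shows $p_i^k \in p_1R_P$, and primality of $p_1R_P$ yields $p_i \in p_1R_P$ (with the symmetric containment also holding), giving $p_iR_P = p_1R_P$ as desired. I expect the main obstacle to be the residual case $s_1 \in P$, where the cofactor itself lies in $Q$; this will presumably require an iterated factorization argument that peels successive powers of $Q$ out of the
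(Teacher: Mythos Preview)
Your approach is incomplete (you explicitly leave the case $s_1 \in P$ open) and far more elaborate than necessary. The detour through minimal primes of $yT$, localization at $Q$, and finite-length arguments in $T_Q$ forces you at the end to pull an equality of ideals in $T_Q$ back to an equality of principal ideals in $R$, and it is precisely this step that breaks down: the relationship between $R$ and $T$ is too loose (merely a quasi-local subring avoiding zerodivisors) to guarantee that units of $T_Q$ landing in $R_P$ can be recovered from your cofactor data when $s_1 \in P$.

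The paper's proof is entirely elementary and works almost exclusively inside $R$. Suppose $y$ lies in infinitely many distinct principal primes $q_1R, q_2R, \ldots$ of $R$. One shows by induction on $k$ that $y = q_1 q_2 \cdots q_k r_k$ for some $r_k \in R$. Given $y = q_1 \cdots q_k r_k \in q_{k+1}R$, primality of $q_{k+1}R$ forces either $r_k \in q_{k+1}R$ (good: write $r_k = q_{k+1}r_{k+1}$ and continue) or some $q_i \in q_{k+1}R$ with $i \le k$. In the latter case $q_iR \subseteq q_{k+1}R$; writing $q_i = q_{k+1}r$ and using that $q_iR$ is prime, one gets either $q_{k+1} \in q_iR$ (so $q_iR = q_{k+1}R$, contradicting distinctness) or $r \in q_iR$ (so $q_i = q_{k+1}q_i r'$, and cancelling the non-zerodivisor $q_i$ makes $q_{k+1}$ a unit, contradicting primality of $q_{k+1}R$). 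Since each $q_i \in R \cap M \subseteq M$, this gives $y \in M^k$ for every $k$, and Krull's intersection theorem in the Noetherian ring $T$ yields $y = 0$.

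The key point you missed is that the hypothesis ``$q_iR$ is a \emph{prime ideal of $R$}'' is already strong enough to peel off factors one at a time directly in $R$; passing to $T$ throws this away and leaves you trying to reconstruct it from weaker data in a localization.
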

\begin{proof}
Let $y \in R$, and suppose $\{q_iR\}_{i \in \mathbb{N}}$ are prime ideals of $R$ with $y \in q_iR$ for all $i \in \mathbb{N}$ and with $q_iR = q_jR$ if and only if $i = j$. We claim that, for $k \in \mathbb{N}$, there is an element $r_k$ in $R$ such that $y = (q_1q_2\cdots q_k)r_k$. To show this, we induct on $k$. Since $y \in q_1R$, there is an $r_1 \in R$ such that $y = q_1r_1$.  Now suppose $k \geq 1$ and assume there is an $r_k \in R$ such that $y = (q_1q_2\cdots q_k)r_k$. Now $y \in q_{k + 1}R$, and since $q_{k + 1}R$ is a prime ideal of $R$, we have that $r_k \in q_{k + 1}R$ or $q_i \in q_{k + 1}R$ for some $i = 1,2, \ldots ,k$. If $r_k \in q_{k + 1}R$, then $r_k = q_{k + 1}r_{k + 1}$ for some $r_{k + 1} \in R$.  Hence, $y = (q_1q_2\cdots q_kq_{k + 1})r_{k+ 1}$, and our claim holds by induction. So suppose $q_i \in q_{k + 1}R$ for some $i = 1,2, \ldots ,k$. Then $q_iR \subseteq q_{k + 1}R$ and so $q_i = q_{k + 1}r$ for some $r \in R$. Since $q_iR$ is a prime ideal of $R$, $q_{k + 1} \in q_iR$ or $r \in q_iR$. If $q_{k + 1} \in q_iR$ then $q_iR = q_{k + 1}R$, a contradiction. If $r \in q_iR$ then $r = q_ir'$ for some $r' \in R$ and we have $q_i = q_{k + 1}q_ir'$. Since $q_i$ is not a zerodivisor in $T$, we have $1 = q_{k + 1}r'$, and so $q_{k + 1}$ is a unit, contradicting that $q_{k + 1}R$ is a prime ideal of $R$. Thus our claim holds.  It follows that $y \in \bigcap_{k \in \mathbb{N}} (M)^k$,  and so by Krull's intersection theorem, $y = 0$.
%
%$y =$ for all $N \in \mathbb{N}$. Since $q_1$ is a prime element of $R$, it is in $R \cap M \subset M$ and so $y \in q_1R \subseteq M$. Now suppose $k \geq 1$ and $y \in $
%     
%    Suppose $I$ is a finitely generated ideal of $R$ and consider an arbitrary non-zero generator $y$ of $I$. We prove that $I$ is a subset of at most finitely many principal ideals of $\Spec(R)$ by showing that $y$ cannot be contained in infinitely many principal prime ideals of $R$. Let $\Lambda$ be an infinite index set. Suppose, by way of contradiction, that $y \in q_iR$ for $q_i \in R$ and all $i \in \Lambda$. Fix $N \in \N$ and write $y = q_1 r_1 \in q_1R$. By assumption, $q_2R$ is prime, and $q_iR$ and $q_jR$ are incomparable whenever $i\neq j$. So, if $y \in q_2R$, then $r_1 \in q_2R$. Hence, $y = q_1 q_2 r_2$ for some $r_2 \in q_3R$. We repeat the argument $N-2$ more times to get that $y = q_1 \cdots q_N r_N$. This implies that $y \in (M \cap R)^N \subseteq M^NT$. Since $y \in q_iR$ for infinitely many $I$'s, it follows that $y \in \bigcap_{i=1}^{\infty} M^i T$ and since $T$ is Noetherian, $\bigcap_{i=1}^{\infty}M^iT = (0)$ by Krull's Intersection Theorem. Therefore $y = 0$, which is a contradiction.
\end{proof}

If $R$ is a subring of the ring $T$ and $Q$ is an ideal of $T$, then there is an injective map $R/(R \cap Q) \longrightarrow T/Q$. Therefore, it makes sense to say that an element $t + Q \in T/Q$ is either algebraic or transcendental over the ring $R/(R \cap Q)$.

\begin{lemma} \label{transcendentals}
Let $(T,M)$ be a local ring and let $R$ be a subring of $T$. Let $q$ be an element of $R$ such that $q \in M$ and $q$ is a regular element of $T$.  Suppose $Q$ is an ideal of $T$ such that  $R\cap Q = qR$. If $t \in T$ with $t+Q\in T/Q$ transcendental over $R/(R\cap Q)$, then $t$ is transcendental over $R$.
\end{lemma}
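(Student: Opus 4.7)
The plan is to argue by contradiction. Suppose $t$ is algebraic over $R$, so there is a nonzero polynomial $f(x)=\sum_{i=0}^{n} r_i x^i \in R[x]$ with $f(t)=0$. I will combine the transcendence hypothesis on $t+Q$ with the regularity of $q$ to show that every coefficient $r_i$ lies in $q^kR$ for all $k\in\mathbb{N}$, and then invoke \Cref{largestk} to derive a contradiction.

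The first step is to reduce $f(t)=0$ modulo $Q$. Because $R\cap Q = qR$, the natural map $R/qR \hookrightarrow T/Q$ is injective, and the transcendence of $t+Q$ over $R/(R\cap Q)$ inside $T/Q$ forces each $r_i + qR = 0$ in $R/qR$; that is, $r_i \in qR$ for every $i$. Writing $r_i = q s_i^{(1)}$ with $s_i^{(1)}\in R$, the relation $f(t)=0$ becomes $q\sum_i s_i^{(1)} t^i = 0$, and since $q$ is a regular element of $T$ I can cancel it to obtain $\sum_i s_i^{(1)} t^i = 0$.

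Now I iterate. The polynomial $g_1(x)=\sum_i s_i^{(1)} x^i \in R[x]$ vanishes at $t$, so the same argument applied to $g_1$ yields $s_i^{(1)} \in qR$ for every $i$, giving $r_i \in q^2 R$; inductively, $r_i \in q^k R$ for every $k\in\mathbb{N}$ and every $i$. Since $f\neq 0$, some coefficient $r_j$ is nonzero. Then $r_j$ is a nonzero element of $R$ with $q\in M$ and $r_j \in qR$, so \Cref{largestk} supplies a largest positive integer $k$ for which $r_j \in q^k R$. This directly contradicts $r_j \in q^k R$ for all $k$, completing the proof.

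The argument is short and the potential pitfalls are minor: one must verify that the injection $R/(R\cap Q)\hookrightarrow T/Q$ does let transcendence of $t+Q$ pass to vanishing of coefficients modulo $qR$, and that the regularity of $q$ in $T$ (not merely in $R$) is what allows the cancellation in each iteration. Neither step is deep, so I do not expect a serious obstacle; the main conceptual content is recognizing that the interplay between \Cref{largestk} and the mod-$Q$ reduction turns a single algebraic relation into infinite divisibility by $q$.
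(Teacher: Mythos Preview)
Your proof is correct and follows essentially the same approach as the paper's: use the transcendence of $t+Q$ to force all coefficients into $qR$, cancel $q$ by regularity, and obtain a contradiction via \Cref{largestk}. The only cosmetic difference is that the paper factors out the maximal common power $q^{j}$ (with $j=\min\{k_i\}$) in a single step to reach the contradiction, whereas you iterate one factor of $q$ at a time to deduce $r_i\in q^kR$ for all $k$; the two presentations are interchangeable.
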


\begin{proof}
%    We prove the contrapositive, that is, we show that if $t$ is algebraic over $R$ then $t+Q$ is algebraic over $R/(R\cap Q)$. 

Suppose $t\in T$ is algebraic over $R$. Then $a_0 + a_1t + a_2t^2 +\dots+ a_nt^n = 0$ for some $a_i \in R$ with $a_n \neq 0$. Since $t+Q\in T/Q$ is transcendental over $R/(R\cap Q)$, we have $a_i \in R \cap Q = qR$ for all $i \in \{0,1, \ldots ,n\}$. If $a_i \neq 0,$ let $k_i$ be the largest positive integer such that $a_i\in q^{k_i}R$. Note that $k_i$ exists by Lemma \ref{largestk}.  
%If $a_i = 0,$ define $a_i' = 0$ while if $a_i \neq 0$, define $a_i'\in R$ such that $a_i = q^{k_i}a_i'$. Note that if $a_i \neq 0$ then $a_i'\notin qR$. 
Let $j = \min\set{k_i}$. Since $j \leq k_i$ for all $i$,
$$0 = a_0 + a_1t + a_2t^2 +\dots+ a_nt^n = q^j(b_0 + b_1t + b_2t^2 +\dots+ b_nt^n)$$
%    \begin{align*}
%        f(x) &= q^{k_0}a_0' + q^{k_1}a_1'x + q^{k_2}a_2'x^2 +\dots+ q^{k_n}a_n'x^n\\
%        &= q^j(q^{k_0-j}a'_0 + q^{k_1-j}a'_1x +\dots + q^{k_n-j}a'_nx^n)\\
%        &= q^jg(x)
%    \end{align*}
for some $b_i \in R$ where at least one $b_i$ is not in $qR$. Since $q$ is a regular element of $T$, we have $b_0 + b_1t + b_2t^2 +\dots+ b_nt^n = 0$. Since $t+Q\in T/Q$ is transcendental over $R/(R\cap Q)$, we have $b_i \in R \cap Q = qR$ for all $i$, a contradiction. 
%    Since $\set{k_i}$ is a finite set of elements, $j = k_i$ for some $i$, thus the coefficient for $x^i$ in $g(x)$ is $q^{k_i-k_i}a_i' = a_i'$. Since $a_i'\notin qR$, then $g(x) \notin qR[x] = (R\cap Q)[x]$. Thus, in $R[x]/(R\cap Q)$, $g(x) + (R\cap Q)[x]$ is a non-zero polynomial. Furthermore, since $f(t) = q^jg(t) = 0$, and since $q$ is a nonzero regular element, then $g(t) = 0$ and so  $g(t) + (R\cap Q) = 0 + (R\cap Q)$ implying that $t$ is algebraic over $R/(R\cap Q)$.
\end{proof}

% To prove that for each subring $R$ of $T$ in our chain, $IT \cap R = IR$ for all finitely generated ideal $I$ of $R$, we will need to show that elements in $R$ are in at most finite powers of an arbitrary principal ideal of $R$. It is important to note that this follows if $R$ is Noetherian. Most of the rings in the ascending chain, however, are quasi-local and not local. The next two lemmas show that we can accomplish this whenever $R$ is a subring of a Noetherian ring .

% One of the few uses of this lemma is to make sure that no generator of $I$ can be in all of the $q_iA$'s. Once we make sure that no generator of $I$ can be in more than one $q_iA$, we use the following lemma. This lemma let's us factor out powers of $q_i$ from the generator in $q_iA$ to rewrite the generator in a form that is not contained in any of the $q_iA$'s.

In this paper, we generalize the main results of \cite{Chatlos}. In particular, in \cite{Chatlos}, it is shown that, given a complete local ring $T$ along with a finite set of prime ideals $C$ of $T$, there are conditions for which $T$ has a precompletion $A$ such that $A$ has a principal prime ideal whose formal fiber's maximal elements are exactly the elements of $C$.  We generalize this result in two ways.  First, we show that the set $C$ can be infinitely countable.  Second, we find conditions for which $T$ has a precompletion $A$ such that $A$ has countably many principal prime ideals that have countably many maximal elements of their formal fibers.  Moreover, this set of maximal elements can be controlled. 
%which glue a set of prime ideals in $T$ to a single principally generated ideal $P$ in $\Spec(A)$, while maintaining control of the maximal elements of the formal fiber of $A$ at $P$.
%
%Our generalization allows us to glue \textit{countably} many sets of prime ideals in $T$, each to a different principally generated ideal in a local domain $A$ whose completion is $T$. 
Many of our results and the proofs for our results are analogous to those from \cite{Chatlos}. Therefore, before stating our own results, we first state the following important definition from \cite{Chatlos}.

\begin{definition}[\cite{Chatlos}, Definition 2.2] \label{Chatlos 2.2}
     Let $(T,M)$ be a complete local ring, and suppose we have a finite, pairwise incomparable set $C = \{Q_1, Q_2,\dots ,Q_k\} \subseteq \Spec (T)$. Let $p \in \bigcap_{i=1}^k Q_i$ be a nonzero regular element of $T$. Suppose that $(R,R \cap M)$ is a quasi-local subring of $T$ containing $p$ with the following properties:
    \begin{enumerate}[(i)]
        \item $\Gamma(R) < |T|$, where $\Gamma(R)=\sup\{ |\N|, |R|\}$,
        \item If $P$ is an associated prime ideal of $T$ then $R \cap P = (0)$, and
        \item For all $i \in \{1,\dots,k\}$, $F_R \cap Q_i \subseteq pT$.
    \end{enumerate}
    Then we call $R$ a \textit{$p$ca-subring of T}.
\end{definition}

%Note that in this paper, we will often refer to $q_i$ca-subrings which replaces $p$ with the element $q_i$. We are now ready to begin our construction.

 %%%%%YAYYYYYYYYYYYYYYYYYYYYYYYYYYYYYYYYYYYYYYYYYYYYYYYYYY%%%%%%%

\subsection{The Construction}
We introduce the definition of a $Ct\mathfrak{q}$-subring of a complete local ring $T$, an essential component in our construction. Note that $Ct\mathfrak{q}$-subrings are generalizations of $p$ca-subrings from \cite{Chatlos}. 

\begin{definition}%[$Ct\mathfrak{q}$-subring] 
\label{Definition 6.1}
    Let $(T,M)$ be a complete local ring. For each $i \in \mathbb{N}$, let $C_i$ be a nonempty countable set of nonmaximal pairwise incomparable prime ideals of $T$ and such that, if $i \neq j$, then either $C_i = C_j$ or no element of $C_i$ is contained in an element of $C_j$. For each $i \in \mathbb{N}$, let $q_i$ be a nonzero regular element of $T$ such that $q_i \in \bigcap_{Q \in C_i}Q$ and, if $C_j \neq C_i$ and $Q' \in C_j$, then $q_i \not\in Q'$. Suppose that $(R, R\cap M)$ is an infinite quasi-local subring of $T$ containing $q_i$ for all $i \in \mathbb{N}$ such that:
    \begin{enumerate}[(i)]
        \item $|R| < |T|$,
        \item If $P \in \Ass(T)$, then $R \cap P = (0)$,  and
        \item If $i \in \mathbb{N}$, then $F_R \cap Q \subseteq q_iT$ for all $Q\in C_i$.
    \end{enumerate}
    Let $\mathfrak{q} = \{q_i \given i \in \mathbb{N}\}$ and let $C = \{C_i \given i \in \mathbb{N}\}$. Then we call $R$ a \textit{C to} $\mathfrak{q}$ \textit{subring of T}, or a \textit{$Ct\mathfrak{q}$-subring of $T$} for short.
\end{definition}

When the ring $T$ is understood, we will often call $R$ a $Ct\mathfrak{q}$-subring instead of a $Ct\mathfrak{q}$-subring of $T$. Notice that condition $(ii)$ implies that $Ct\mathfrak{q}$-subrings are integral domains. Moreover, $Ct\mathfrak{q}$-subrings contain regular elements in $T$ of the form $q_i$ for all $i \in \mathbb{N}$. These elements will be used to generate principal prime ideals $p_iA$ in our final local domain $A$ such that the formal fiber of $A$ at $p_iA$ has maximal elements exactly the elements of $C_i$. We note that if $T$ is a complete local ring that contains a $Ct\mathfrak{q}$-subring, then $T$ has prime ideals that are not maximal that contain regular elements of $T$. It follows that the dimension of $T$ is at least two.

In order to construct the desired local domain $A$ with completion $T$, we adjoin elements to $Ct\mathfrak{q}$-subrings and then localize. By doing this, we construct a chain of $Ct\mathfrak{q}$-subrings and show that their union is our desired precompletion $A$. We next show that, under certain circumstances, a localization of a $Ct\mathfrak{q}$-subring is a $Ct\mathfrak{q}$-subring, and the union of a chain of $Ct\mathfrak{q}$-subrings is a $Ct\mathfrak{q}$-subring. These results help us maintain the $Ct\mathfrak{q}$-subring properties throughout most of our construction. We note that Lemma \ref{localization lemma} is the analog of Lemma 2.4 from \cite{Chatlos} and Lemma \ref{unioning lemma} is the analog of Lemma 2.5 from \cite{Chatlos}, and the proofs of Lemmas \ref{localization lemma} and \ref{unioning lemma} are largely based on the proofs of those lemmas in \cite{Chatlos}.
%However, adjoining an element to a local ring and taking the union of several $Ct\mathfrak{q}$-subrings does not necessarily result in a $Ct\mathfrak{q}$-subring, or even a local ring. We address this in the following two corollaries, which are the $Ct\mathfrak{q}$-subrings analogs of Lemmas 2.4 and 2.5 from \cite{Chatlos}.

\begin{lemma} \label{localization lemma}
   Let $(T,M)$, $C_i$, and $q_i$ for $i \in \mathbb{N}$ be as in Definition \ref{Definition 6.1}. Suppose $R$ is an infinite subring of $T$. If $R$ satisfies conditions (i)-(iii) from Definition \ref{Definition 6.1}, then $R_{(R\cap M)}$ satisfies conditions (i)-(iii) from Definition \ref{Definition 6.1}.  In particular, if $R$ contains $q_i$ for all $i \in \mathbb{N}$, then $R_{(R\cap M)}$ is a $Ct\mathfrak{q}$-subring of $T$.
\end{lemma}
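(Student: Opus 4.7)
The plan is to verify conditions (i)--(iii) of Definition \ref{Definition 6.1} for $R' := R_{(R\cap M)}$, viewed as a subring of $T$. First I would note that since $M$ is the unique maximal ideal of the local ring $T$, every $s \in R \setminus (R \cap M)$ lies outside $M$ and is therefore a unit of $T$; in particular, no such $s$ is a zerodivisor in $T$, so the universal property of localization yields an injective ring map $R' \hookrightarrow T$ extending $R \hookrightarrow T$. I would then identify $R'$ with its image in $T$, so that expressions like $r/s$ are literal equalities in $T$.

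For condition (i), since $R$ is infinite we have $|R'| = |R| < |T|$. For condition (iii), the quotient fields satisfy $F_{R'} = F_R$, so $F_{R'} \cap Q = F_R \cap Q \subseteq q_iT$ for every $i \in \mathbb{N}$ and every $Q \in C_i$ by the assumption on $R$. For condition (ii), let $P \in \Ass(T)$ and suppose $x = r/s \in R' \cap P$ with $r \in R$ and $s \in R \setminus (R \cap M)$. Since $s$ is a unit of $T$, the equation $r = sx$ holds in $T$, and because $P$ is an ideal of $T$, $r \in P$. Hence $r \in R \cap P = (0)$ by hypothesis on $R$, which gives $x = 0$.

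For the final assertion, suppose $R$ contains every $q_i$. Then $R'$ contains every $q_i$, and $R'$ is infinite since it contains $R$. It remains to verify that $R'$ is quasi-local with maximal ideal $R' \cap M$. The localization $R'$ is quasi-local with maximal ideal $(R \cap M) R'$, and I would show this coincides with $R' \cap M$: the inclusion $(R \cap M) R' \subseteq R' \cap M$ is immediate, while conversely, given $x = r/s \in R' \cap M$ with $s$ a unit of $T$, we have $r = sx \in M \cap R = R \cap M$, so $x \in (R \cap M) R'$. The argument is routine; the only mildly subtle point is condition (ii), where one must exploit that denominators become units in $T$ to intersect the equation $r = sx$ with the associated prime $P$ inside $T$ itself.
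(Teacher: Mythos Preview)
Your proof is correct and follows essentially the same approach as the paper's: verify the cardinality bound via $|R'|=|R|$, use $F_{R'}=F_R$ for condition (iii), and for condition (ii) write an element of $R'\cap P$ as a fraction, push the numerator into $R\cap P=(0)$, and cancel. You are, if anything, more careful than the paper---you explicitly justify the embedding $R'\hookrightarrow T$ via units in $T$ and verify that $(R\cap M)R' = R'\cap M$, points the paper leaves implicit.
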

\begin{proof}
    Let $R$ be an infinite subring of $T$ satisfying properties $(i)$, $(ii)$, and $(iii)$ from Definition \ref{Definition 6.1}. Since $R$ is infinite, we have $|R_{(R \cap M)}| = |R| < |T|$. Now suppose $P \in \Ass(T)$ and let $x \in R_{(R \cap M)} \cap P$. Then $r_1x = r_2$ for $r_1,r_2 \in R$ with $r_1 \neq 0$. Thus $r_2 \in R \cap P = (0)$ and it follows that $x = 0$.  Hence, condition $(ii)$ is satisfied. Finally, let $i \in \mathbb{N}$ and let $Q \in C_i$. Suppose $y \in F_{R_{(R \cap M)}} \cap Q$. Then $y \in F_R \cap Q \subseteq q_iT$ and so condition $(iii)$ is satisfied. Finally, note that $R_{(R\cap M)}$ is an infinite quasi-local subring of $T$. If $R$ contains $q_i$ for all $i$, then so does $R_{(R \cap M)}$ and so $R_{(R \cap M)}$ is a $Ct\mathfrak{q}$-subring of $T$.
    %Then, by the proof of Lemma \ref{pca iff}, $R$ satisfies conditions (i)-(iii) of a $q_i$ca-subring for all $i \in \Phi$. Therefore, by Lemma 2.4 from \cite{Chatlos}, $R_{(R\cap M)}$ satisfies conditions (i)-(iii) of a $q_i$ca-subring for all $i \in \Phi$. So, again by the proof of Lemma \ref{pca iff}, $R_{(R\cap M)}$ satisfies conditions (i)-(iii) from Definition \ref{Definition 6.1}.
\end{proof}

A main part of our construction is to build a chain of $Ct\mathfrak{q}$-subrings of $T$ and then claim that the union of the $Ct\mathfrak{q}$-subrings in the chain satisfies most properties of being a $Ct\mathfrak{q}$-subring of $T$.  We use the next lemma to prove this claim.

\begin{lemma} \label{unioning lemma}
    Let $(T,M)$, $C_i$, and $q_i$ for $i \in \mathbb{N}$ be as in Definition \ref{Definition 6.1}. Let $\Omega$ be a well-ordered set, and let $\{(R_{\alpha}, R_{\alpha} \cap M) \given \alpha \in \Omega\}$ be a set of $Ct\mathfrak{q}$-subrings of $T$ indexed by $\Omega$ with the property that $R_{\alpha} \subseteq R_{\beta}$ for all $\alpha$ and $\beta$ such that $\alpha < \beta$. Let $S = \bigcup_{\alpha \in \Omega}R_{\alpha}$. Then $S$ is quasi-local with maximal ideal $S \cap M$ and satisfies conditions (ii), and (iii) of Definition \ref{Definition 6.1}. Furthermore, if, for some $\lambda$, $|R_{\alpha}| \leq \lambda$ for all $\alpha \in \Omega$, we have $|S| \leq \lambda |\Omega|$. So, if $|\Omega| \leq \lambda$ and $|R_{\alpha}|=\lambda$ for some $\alpha$, $|S|=\lambda$.
\end{lemma}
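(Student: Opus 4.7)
The plan is to verify each of the four claims about $S$ in turn, relying throughout on the following principle: because $\Omega$ is well-ordered and $\{R_{\alpha}\}$ is a chain under inclusion, any \emph{finite} collection of elements of $S$ lies in a common $R_{\alpha}$, namely the one indexed by the maximum of the finitely many indices chosen to witness membership in $S$.

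First I would verify that $S$ is a ring and that it is quasi-local with maximal ideal $S \cap M$. Closure of $S$ under sum and product follows from the principle above applied to pairs of elements. For quasi-locality, I would take $s \in S \setminus (S \cap M)$, pick $\alpha$ with $s \in R_{\alpha}$, and observe that $s \notin R_{\alpha} \cap M$; since $R_{\alpha}$ is quasi-local with maximal ideal $R_{\alpha} \cap M$, the element $s$ is a unit in $R_{\alpha}$, and hence a unit in $S$. Thus every element outside $S \cap M$ is a unit, so $S$ is quasi-local with maximal ideal $S \cap M$.

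Next I would confirm conditions (ii) and (iii) of Definition \ref{Definition 6.1}. For (ii), if $P \in \Ass(T)$ and $x \in S \cap P$, then choosing $\alpha$ with $x \in R_{\alpha}$ gives $x \in R_{\alpha} \cap P = (0)$ by the corresponding condition for $R_{\alpha}$, so $S \cap P = (0)$. For (iii), fix $i \in \mathbb{N}$ and $Q \in C_i$, and write a typical element of $F_S \cap Q$ as $a/b$ with $a, b \in S$ and $b \neq 0$; choosing $\alpha$ large enough that $a, b \in R_{\alpha}$ simultaneously shows $a/b \in F_{R_{\alpha}} \cap Q \subseteq q_i T$.

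Finally, for the cardinality bounds I would estimate $|S| \leq \sum_{\alpha \in \Omega} |R_{\alpha}| \leq \lambda \cdot |\Omega|$. Assuming in addition that $|\Omega| \leq \lambda$, the fact that each $R_{\alpha}$ is required to be infinite (Definition \ref{Definition 6.1}) forces $\lambda$ to be infinite, so $\lambda \cdot |\Omega| = \lambda$; combining this with the reverse inequality $|S| \geq |R_{\alpha}| = \lambda$ for the witnessing $\alpha$ yields $|S| = \lambda$. The whole argument is essentially bookkeeping; the only mildly delicate point is the appearance of a fraction field in (iii), but the chain structure together with the existence of maxima of finite index sets handles it immediately.
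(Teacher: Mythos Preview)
Your proof is correct and follows essentially the same approach as the paper's own proof: pick an index large enough to contain the finitely many relevant elements, then invoke the corresponding property of $R_{\alpha}$. The paper is terser---it dismisses the cardinality statement as not requiring proof and asserts quasi-locality in one line---but the logical content is identical.
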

\begin{proof}
    The cardinality statement does not require a proof. Suppose $\{(R_{\alpha}, R_{\alpha} \cap M) \,| \,\alpha \in \Omega\}$ is a set of $Ct\mathfrak{q}$-subrings indexed by $\Omega$ with the property $R_{\alpha} \subseteq R_{\beta}$ for all $\alpha$ and $\beta$ such that $\alpha < \beta$. Since each $R_{\alpha}$ is quasi-local with maximal ideal $R_{\alpha} \cap M$, $S$ is quasi-local with maximal ideal $S \cap M$.
    Let $P \in \Ass(T)$. Since $R_{\alpha} \cap P = (0)$ for every $\alpha \in \Omega$, we have $S \cap P = (0)$. Now let $i \in \mathbb{N}$ and suppose $Q \in C_i$.  Let $x \in F_S \cap Q$. Then $s_1x = s_2$ for $s_1,s_2 \in S$. Choose $\alpha \in \Omega$ such that $s_1,s_2 \in R_{\alpha}$. Then $x \in F_{R_{\alpha}} \cap Q \subseteq q_iT$.
    %Then, by the proof of Lemma \ref{pca iff}, each $R_{\alpha}$ satisfies conditions (ii) and (iii) of a $q_i$ca-subring for all $i \in \Phi$.
    %
    %Pick a $j \in \Phi$. Then, by Lemma 2.5 from \cite{Chatlos}, $S$ is quasi-local and satisfies conditions (ii) and (iii) of a $q_j$ca-subring. Since $j$ is arbitrary in $\Phi$, $S$ is a $q_i$ca-subring for all $i \in \Phi$. Therefore, by the proof of Lemma \ref{pca iff}, $S$ is a quasi-local subring that satisfies conditions (ii) and (iii) of a $Ct\mathfrak{q}$-subring.
\end{proof}

A crucial part of our construction will be finding $Ct\mathfrak{q}$-subrings $S$ of $T$ that satisfy the condition that $q_iT \cap S = q_iS$ for all $i \in \mathbb{N}$. Although not all of our $Ct\mathfrak{q}$-subrings will satisfy this condition, we show that, for any given $Ct\mathfrak{q}$-subring $R$, we can find another $Ct\mathfrak{q}$-subring $S$ containing $R$ that does satisfy the condition that $q_iT \cap S = q_iS$ for all $i \in \mathbb{N}$.

\begin{lemma} \label{lemma 6.5}
    Let $(T,M)$, $C_i$, and $q_i$ for $i \in \mathbb{N}$ be as in Definition \ref{Definition 6.1}. Let $(R, R \cap M)$ be a quasi-local subring of $T$ satisfying all conditions to be a $Ct\mathfrak{q}$-subring of $T$ except that $R$ may be finite. Then $S = F_R \cap T$ satisfies all conditions to be a $Ct\mathfrak{q}$-subring of $T$ except that $S$ might be finite. Moreover, $R \subseteq S \subseteq T$ and $q_iT \cap S = q_iS$ for all $i \in \mathbb{N}$. If $R$ is infinite, then $S$ is a $Ct\mathfrak{q}$-subring of $T$ satisfying $|S| = |R|$. 
    %quasi-local subring of $T$ that satisfies conditions (i), (ii), and (iii) from Definition \ref{Definition 6.1} and contains $q_i$ for all $i \in \Phi$. Then a quasi-local subring $S$ of $T$ exists such that $S$ satisfies all three conditions, $R \subseteq S \subseteq T$, and for all $i \in \Phi$, we have $q_iT \cap S = q_iS$. Moreover, if $R$ is infinite, then $|S|=|R|$ and $S$ is a $Ct\mathfrak{q}$-subring of $T$.
\end{lemma}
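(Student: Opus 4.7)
The plan is to verify each required property of $S = F_R \cap T$ directly. First I would set up the ambient framework: condition (ii) for $R$ forces every nonzero element of $R$ to be a regular element of $T$ (otherwise such an element would lie in some $P \in \Ass(T)$, contradicting $R \cap P = (0)$). In particular $R$ is a domain, so $F_R$ is defined, and since elements of $R\setminus\{0\}$ are nonzerodivisors in $T$, both $F_R$ and $T$ embed into the total quotient ring $Q(T)$, which is where I interpret the intersection $F_R \cap T$. The containments $R \subseteq S \subseteq T$ are then immediate, and $S$ contains every $q_i$ because $R$ does.

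Next I would verify the structural and cardinality conditions. For the quasi-local structure: if $s \in S \setminus (S \cap M)$, then $s \notin M$, so $s$ is a unit in $T$; writing $s = a/b$ with $a,b \in R$ and $b \neq 0$ gives $s^{-1} = b/a \in F_R \cap T = S$, so $s$ is a unit in $S$ and $S \cap M$ is the unique maximal ideal of $S$. For condition (ii), if $x \in S \cap P$ with $P \in \Ass(T)$, write $x = a/b$ with $a,b \in R$, $b \neq 0$; then $a = bx \in R \cap P = (0)$, and since $b$ is a regular element of $T$ we conclude $x = 0$. For condition (iii), observe that $R \subseteq S \subseteq F_R$ forces $F_S = F_R$, so the hypothesis $F_R \cap Q \subseteq q_iT$ transfers verbatim to $S$. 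The cardinality statement $|R| \le |S| \le |F_R|$ is immediate, and when $R$ is infinite this gives $|S| = |R| < |T|$, so $S$ is a genuine $Ct\mathfrak{q}$-subring.

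The one genuinely new property is $q_iT \cap S = q_iS$. The inclusion $q_iS \subseteq q_iT \cap S$ is trivial. For the reverse, let $x \in q_iT \cap S$ and write $x = q_i t$ with $t \in T$. Since $x \in S \subseteq F_R$ and $q_i \in R \subseteq F_R$ with $q_i \neq 0$, the element $x/q_i$ is defined in $F_R$; and in $Q(T)$ it equals $t$ because $q_i$ is regular in $T$. Hence $t \in F_R \cap T = S$, giving $x = q_i t \in q_iS$. This is really the whole point of intersecting with $F_R$: it lets us freely divide by nonzero elements of $R$ inside $T$.

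I do not anticipate a significant obstacle here: everything reduces to the observation that $S = F_R \cap T$ is the largest subring of $T$ whose field of fractions equals $F_R$, so it inherits condition (iii) for free while also being closed under whatever division is possible inside $T$. The only thing one must be careful about is justifying the quotients $x/q_i$ and $s^{-1}$ consistently in $Q(T)$, which is handled by the observation that nonzero elements of $R$ are regular in $T$.
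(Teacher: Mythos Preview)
Your proposal is correct and follows essentially the same route as the paper's proof: both verify quasi-locality, conditions (i)--(iii), and the identity $q_iT\cap S=q_iS$ directly, using that nonzero elements of $R$ are regular in $T$ and that $F_S=F_R$. Your added framing in terms of the total quotient ring $Q(T)$ is a bit more explicit than the paper's, but the arguments are otherwise the same.
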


\begin{proof}
    We first show that $S$ is quasi-local with maximal ideal $S \cap M$. In particular, we show that $x \in F_R \cap T$ is a non-unit if and only if $x \in M \cap F_R \cap T$. Let $x \in F_R \cap T$. Suppose $x \notin M$. Then $x$ is a unit in $T$, and so there exists $t \in T$ such that $tx=1$. Now,  $xr_1=r_2$ for $r_1, r_2 \in R$, and so $r_1 = tr_2$. This implies that $t \in F_R$, and so $x$ is a unit in $F_R \cap T$. Thus, if $x \in F_R \cap T$ is a non-unit then $x \in M \cap F_R \cap T$. The other direction follows since if $x \in M$, then $x$ is not a unit. Thus, $S=F_R\cap T$ is quasi-local with maximal ideal $S \cap M$.

    Note that $R \subseteq S \subseteq T$ and since $R$ contains $q_i$ for all $i \in \mathbb{N}$, so does $S$. If $R$ is finite, then so is $S$ and if $R$ is infinite, then $|S| = |R|$. As a consequence, $|S|<|T|$. Now let $P \in \Ass(T)$ and suppose $x \in S \cap P$. Then $r_1x = r_2$ for some $r_1,r_2 \in R$ with $r_1 \neq 0$ and so $r_1x \in R \cap P = (0)$. Since $r_1$ is not a zerodivisor of $T$, we have $x = 0$. Therefore, $S \cap P = (0)$. Now let $i \in \mathbb{N}$ and suppose $Q \in C_i$. Let $y \in F_S \cap Q$. Then $y \in F_R \cap Q \subseteq q_iT$. It follows that $S$ satisfies all conditions to be a $Ct\mathfrak{q}$-subring of $T$ except that it might be finite. If $R$ is infinite, then $S$ is infinite, and so it is a $Ct\mathfrak{q}$-subring of $T$ satisfying $|S| = |R|$.
    
%    $R$ is a quasi-local subring of $T$ satisfying conditions (i), (ii), and (iii) from Definition \ref{Definition 6.1} by assumption. Therefore, as shown in the proof of Lemma \ref{pca iff}, $R$ is a $q_i$ca-subring for each $i \in \Phi$. Then, by Lemma 2.3 in \cite{Chatlos}, $S$ is a $q_i$ca-subring for all $i \in \Phi$. Therefore, $S$ must satisfy conditions (i), (ii), and (iii) from Definition \ref{Chatlos 2.2}. If $R$ is infinite, then $F_R$ is also infinite. Therefore, $S$ is infinite, which means $S$ is a $Ct\mathfrak{q}$-subring.

    Finally, we show that $q_iT \cap S = q_iS$ for all $i \in \mathbb{N}$. Fix $i \in \mathbb{N}$ and let $x \in q_iT \cap S$. Then, we can write $q_it=x$ for $t \in T$ and $xr_2=r_1$ with $r_1$, $r_2 \in R$. This implies that $q_itr_2=r_1$ and so $t \in S$. Therefore, $x \in q_iS$. It follows that $q_iT \cap S \subseteq q_iS$ for all $i \in \mathbb{N}$. The reverse inclusion is a consequence of the fact that $S\subseteq T$. 
\end{proof}

When we adjoin an element of $T$ to a $Ct\mathfrak{q}$-subring, we want the resulting subring of $T$ to also be a $Ct\mathfrak{q}$-subring. The next lemma shows that there are conditions under which we can do this.

\begin{lemma}\label{adjoining transcendentals}
    Let $(T,M)$, $C_i$, and $q_i$ for $i \in \mathbb{N}$ be as in Definition \ref{Definition 6.1}. Moreover, suppose that for every $i \in \mathbb{N}$ and for every $P \in \Ass (T/q_iT)$, we have that $P \subseteq Q$ for some $Q \in C_i$. Let $(R, R\cap M)$ be a quasi-local subring of $T$ containing $q_i$ for all $i \in \mathbb{N}$ and such that, for all $i \in \mathbb{N}$, $q_iT \cap R = q_iR$. Suppose also that $R$ satisfies conditions $(i), (ii)$, and $(iii)$ from Definition \ref{Definition 6.1}. If $x+Q \in T/Q$ is transcendental over $R/(R\cap Q)$ for all $Q \in \bigcup_{i \in \mathbb{N}}C_i$ then $S = R[x]_{(R[x]\cap M)}$ is a $Ct\mathfrak{q}$-subring of $T$. 
    %Moreover, if $R$ is infinite, then $|S| = |R|$.
\end{lemma}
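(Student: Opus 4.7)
The plan is to verify each clause of Definition \ref{Definition 6.1} for $S$: infiniteness, quasi-locality with maximal ideal $S \cap M$, containing every $q_i$, the cardinality bound (i), the associated-prime condition (ii), and the quotient-field condition (iii). That $S$ is quasi-local, contains every $q_i$ (because $R \subseteq S$), and is infinite (since it contains the polynomial ring $R[x]$) is immediate from the construction. The cardinality bound is also routine: $|S| = |R[x]| \leq \max\{|R|, \aleph_0\}$, which is strictly less than $|T|$ by condition (i) of Definition \ref{Definition 6.1} applied to $R$ together with Proposition \ref{Yu 2.10} applied to $T$ (which has dimension at least two whenever a $Ct\mathfrak{q}$-subring exists). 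So the real content lies in conditions (ii) and (iii).

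Before attacking those, I would first record the preparatory identity $R \cap Q = q_iR$ for every $Q \in C_i$. The containment $q_iR \subseteq R \cap Q$ is clear from $q_i \in Q$; conversely, any $r \in R \cap Q$ lies in $F_R \cap Q \subseteq q_iT$ by condition (iii) of Definition \ref{Definition 6.1}, so $r \in q_iT \cap R = q_iR$ by hypothesis. Feeding this identity into Lemma \ref{transcendentals} shows that $x$ is actually transcendental over $R$ in $T$, so $R[x]$ is genuinely a polynomial ring, and for any $f(x) \in R[x]$, reduction modulo $Q$ shows that $f(x) \in Q$ is equivalent to every coefficient of $f$ lying in $q_iR$.

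With this in hand, condition (iii) for $S$ becomes a short computation. Given $y \in F_S \cap Q$ with $Q \in C_i$, write $y = f(x)/g(x)$ where $f(x), g(x) \in R[x]$ and $g(x) \notin M$, so that $g(x)$ is a unit in $T$. Then $f(x) = y g(x) \in R[x] \cap Q$, and the coefficient description above forces $f(x) = q_ih(x)$ for some $h(x) \in R[x]$; therefore $y = q_ih(x)/g(x) \in q_iT$, as required.

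The main obstacle is condition (ii), and this is where the new hypothesis on $\Ass(T/q_iT)$ enters. I would prove the stronger statement $R[x] \cap P = (0)$ for every $P \in \Ass(T)$, from which $S \cap P = (0)$ follows by clearing denominators exactly as above. Suppose for contradiction $0 \neq f(x) \in R[x] \cap P$, and write $P = \mathrm{ann}_T(t_0)$. Because $q_i$ is regular and $\bigcap_k q_i^kT = (0)$ by Krull's intersection theorem, repeatedly replacing $t_0$ by $t_0/q_i$ (whose annihilator is again $P$, by regularity of $q_i$) terminates at some $t_0 \notin q_iT$. Then in $T/q_iT$ we have $\overline{f(x)} \cdot \overline{t_0} = 0$ with $\overline{t_0} \neq 0$, so $\overline{f(x)}$ is a zerodivisor and hence lies in some $P' \in \Ass(T/q_iT)$. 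The added hypothesis gives $P' \subseteq Q$ for some $Q \in C_i$, so $f(x) \in P' \subseteq Q$, and the coefficient argument forces $f(x) \in q_iR[x]$. Since $q_i \notin P$ (because $R \cap P = (0)$) and $P$ is prime, writing $f(x) = q_if_1(x)$ puts $f_1(x) \in R[x] \cap P$; iterating places $f(x)$ in $\bigcap_k q_i^kR[x] \subseteq \bigcap_k q_i^kT = (0)$, a contradiction.
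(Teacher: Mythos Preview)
Your treatment of the preliminaries and of condition (ii) is fine, and in fact your argument for (ii) is essentially the paper's (the paper first shows the stronger fact that every $P\in\Ass(T)$ is contained in some $Q\in C_i$, but your element-by-element version suffices).

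The gap is in condition (iii). You write ``Given $y \in F_S \cap Q$ \ldots\ write $y = f(x)/g(x)$ where $f(x), g(x) \in R[x]$ and $g(x) \notin M$.'' But $F_S$ is the full fraction field of $S$, which equals $F_{R[x]}$, and an element of $F_{R[x]}\cap T$ need not admit a representation with denominator outside $M$; that would amount to assuming $y\in S$ already. So your ``short computation'' only proves $S\cap Q\subseteq q_iT$, not $F_S\cap Q\subseteq q_iT$. Notice that your argument for (iii) never invokes the hypothesis on $\Ass(T/q_iT)$; that hypothesis is in fact needed here, not only for (ii).

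The paper's argument handles arbitrary $f,g\in R[x]$ with $g\neq 0$: writing $y=p\in Q$ with $f=pg$, one extracts the maximal powers $q_i^{n'}$ and $q_i^{m'}$ from the coefficients of $f$ and $g$, obtaining $f',g'\in R[x]$ each with some coefficient outside $q_iT$. Transcendence forces $n'>m'$, hence $pg'\in q_iT$. If $p\notin q_iT$ then $g'$ lies in some $P'\in\Ass(T/q_iT)\subseteq Q'$ for some $Q'\in C_i$; the coefficient argument applied at $Q'$ then forces all coefficients of $g'$ into $q_iT$, a contradiction. Thus $p\in q_iT$. An equivalent descent works: from $gy=f\in q_iR[x]$ and $y\notin q_iT$ one gets $g\in P'\subseteq Q'$ and hence $g\in q_iR[x]$; cancelling $q_i$ and iterating contradicts $g\neq 0$.
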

\begin{proof}
Our proof follows parts of the proof of Lemma 2.8 in \cite{Chatlos} very closely. First note that, since $R$ contains $q_i$ for all $i \in \mathbb{N}$, so does $S$. Since $x + Q$ is transcendental over $R/(R \cap Q)$ for some prime ideal $Q$ of $T$, we have that $R[x]$ is infinite and so $S$ is infinite.  In addition, if $R$ is finite, then $S$ is countable and if $R$ is infinite then $|S| = |R|$. Since $C_1$ is a nonempty set of prime ideals that are not maximal, dim$T \geq 1$. By Proposition \ref{Yu 2.10}, $T$ is uncountable, and so we have $|S| < |T|$.

Fix $i$ and suppose $P \in \Ass(T)$. We claim $P \subseteq Q$ for some $Q \in C_i$. Let $z \in P$. Then there is a nonzero $y \in T$ such that $zy = 0$.  Since $T$ is Noetherian, there is a nonnegative integer $k$ such that $y \in (q_i)^kT$ and $y \not\in (q_i)^{k+ 1}T$. Hence, $y = q_i^kt$ for some $t \in T$ with $t \not\in q_iT$. Now $zq_i^kt = 0$ and since $q_i$ is a regular element of $T$, we have $zt = 0$. Therefore, $z(t + q_iT) = 0 + q_iT$ with $t + q_iT \neq 0 + q_iT$ and it follows that $z \in P'$ for some $P' \in \Ass(T/q_iT)$. By hypothesis, $P' \subseteq Q$ for some $Q \in C_i.$ 
It follows that $P \subseteq \bigcup_{Q \in C_i}Q$. Now use Proposition \ref{stronger coset avoidance} with $C = C_i$, $D = \{0\}$, and $I = P$ to conclude that 
$P \subseteq Q$ for some $Q \in C_i$ as claimed.

 Let $f \in R[x] \cap P \subseteq R[x] \cap Q$ with $f \neq 0$. Then $f = r_nx^n + \cdots + r_1x + r_0 \in Q$ for some $r_j \in R$ with $r_n \neq 0$. Since $x + Q$ is transcendental over $R/(R \cap Q)$, we have $r_j \in R \cap Q$ for all $j = 1,2, \ldots,n$, and since $R$ satisfies condition $(iii)$ of $Ct\mathfrak{q}$-subrings of $T$, we have $R \cap Q \subseteq q_iT$. Let $m$ be the largest positive integer such that $r_j \in q_i^mT$ for all $j = 1,2, \ldots ,n$. Now, $q_i^mT \cap R = q_i^mR$ by Lemma \ref{factoring}. Thus, $r_j = q_i^mr'_j$ for $r'_j \in R$ with at least one $r'_j \not\in q_iT$, and, since $R \cap Q \subseteq q_iT$ we have that at least one $r'_j$ is not in $Q$. Observe that $f = q_i^m(r'_nx^n + \cdots + r'_1x + r'_0) \in P$ and since $q_i$ is a regular element of $T$, it is not in $P$. Hence $r'_nx^n + \cdots + r'_1x + r'_0 \in P \subseteq Q$. This contradicts that $x+Q \in T/Q$ is transcendental over $R/(R\cap Q)$, and so $f = 0$ and we have that $R[x] \cap P = (0)$. It follows that $R[x]$ satisfies condition $(ii)$ of Definition \ref{Definition 6.1}.

 Now fix $i$ and suppose $Q \in C_i$ and $f/g \in F_{R[x]} \cap Q$ where $f,g \in R[x]$ with $g \neq 0$. Then there is an element $p \in Q$ such that $f = pg$. So there exist $r_j,s_j \in R$ such that $f = r_nx^n + \cdots + r_1x + r_0$, $g = s_mx^m + \cdots + s_1x + s_0$ with $s_j \neq 0$ for some $j$ and $$r_nx^n + \cdots + r_1x + r_0 = p(s_mx^m + \cdots + s_1x + s_0).$$ If $f = 0$ then $f/g \in q_iT$ so suppose $f \neq 0$. Then $r_j \neq 0$ for some $j$.  Let $n'$ be the largest integer such that $r_j \in q_i^{n'}T$ for all $j = 1,2, \ldots ,n$ and let $m'$ be the largest integer such that $s_j \in q_i^{m'}T$ for all $j = 1,2, \ldots ,m$. Then we have $$q_i^{n'}(r'_nx^n + \cdots + r'_1x + r'_0) = pq_i^{m'}(s'_mx^m + \cdots + s'_1x + s'_0)$$ where at least one $r'_j \not\in q_iT$ and at least one $s'_j \not\in q_iT$. If $n' \leq m'$ then $r'_nx^n + \cdots + r'_1x + r'_0 \in Q$, contradicting that $x+Q \in T/Q$ is transcendental over $R/(R\cap Q)$. So $n' > m'$. It follows that $p(s'_mx^m + \cdots + s'_1x + s'_0) \in q_iT$. If $p \not\in q_iT$ then $s'_mx^m + \cdots + s'_1x + s'_0 \in P'$ for some $P' \in \Ass(q_iT)$. By hypothesis, we have $s'_mx^m + \cdots + s'_1x + s'_0 \in Q'$ for some $Q' \in C_i$. Now $x+Q' \in T/Q'$ is transcendental over $R/(R\cap Q')$ and so $s'_j \in R \cap Q'$ for all $j = 1,2, \ldots,m$. Since $R \cap Q' \subseteq q_iT$ we have $s'_j \in q_iT$ for all $j = 1,2, \ldots,m$, a contradiction. It follows that $p \in q_iT$ and we have $F_{R[x]} \cap Q \subseteq q_iT$. In other words, $R[x]$ satisfies condition $(iii)$ of Definition \ref{Definition 6.1}.
By Lemma \ref{localization lemma}, $S = R[x]_{(R[x]\cap M)}$ is a $Ct\mathfrak{q}$-subring of $T$. 
%
%   Although our setting is more general, the proof of this lemma follows from the second part of the proof of Lemma 2.8 in \cite{Chatlos}. In their proof, the authors show $R[x]$ satisfies conditions (i), (ii), and (iii) of a $q_i$ca for some $i \in \Phi$. But, this holds for all $i \in \Phi$, meaning it satisfies conditions (i), (ii), and (iii) from the definition of a $Ct\mathfrak{q}$-subring. Therefore, by Corollary \ref{localization lemma}, $R[x]_{(R[x] \cap M)}$ is a $Ct\mathfrak{q}$-subring of $T$.
\end{proof}

% \begin{lemma}\label{assT in Q}
% Let $(T,M)$, $\Phi$, $C_i$, $C$, and $q_i$ be as in Definition \ref{Definition 6.1}. Moreover, suppose that for every $P \in \Ass (T/q_iT)$ and $i \in \Phi$, we have that $P \subseteq Q$ for some $Q \in C_i$. Then, $\Ass (T) \subseteq Q$ for some $Q \in C_i$.
% \end{lemma}
% \begin{proof}
%     Let $z \in P$ be arbitrary. Then $z$ is a zero-divisor of $T$ and there must exist $y \in T$ such that $zy=0$. Let $\ell \geq 0$ be the largest integer so that $y \in q_i^\ell T$, and write $y = q_i^\ell t$ with $t \notin q_iT$. Since $q_i$ is regular in $T$, we have $zt=0$.  Since $t+q_iT \neq q_iT$ in $T/q_iT$, $z$ must be contained in some $P' \in \Ass (T/q_iT)$. By assumption, $P' \subseteq Q$ for some $Q \in C_i$. Thus, $z \in Q$, which implies that $P \subseteq Q$.
% \end{proof}

Lemma \ref{adjoining transcendentals} gives us conditions on a subring $R$ of $T$ and an element $x$ of $T$ that ensures that the ring $S = R[x]_{(R[x] \cap M)}$ is a $Ct\mathfrak{q}$-subring of $T$. We rely heavily on this result for our construction.

We now work to ensure that the domain $A$ we construct has completion $T$. Recall from Proposition $\ref{completion proving machine}$ that, to do this, it suffices to construct $A$ so that the map $A \longrightarrow T/M^2$ is onto and $IT \cap A = I$ for every finitely generated ideal $I$ of $A$. Given a prime ideal $J$ that is not in any element of any $C_i$ and given an element $u + J$ of $T/J$, the next result allows us to adjoin an element of $u + J$ to a $Ct\mathfrak{q}$-subring of $T$ to obtain another of $Ct\mathfrak{q}$-subring of $T$. We repeatedly apply this lemma to ensure that the map $A \longrightarrow T/M^2$ is onto. The lemma will also be useful in Section 4 and in Section 5, when we construct $A$ to be quasi-excellent.

%that the criteria for a local subring $R$ of $T$ to be a precompletion of $T$ are (i) the map $R \rightarrow T/M^2$ is onto, and (ii) $IT\cap R=IR$ for every finitely generated ideal $I$ of $R$. Moreover, we construct $A$ such that it is equal to the union of $Ct\mathfrak{q}$-subrings in an ascending chain. Thus, conditions $(i)$ and $(ii)$ must also be satisfied by each $Ct\mathfrak{q}$-subring in the chain. The following proposition shows that given a $Ct\mathfrak{q}$-subring $R$, we can always find a $Ct\mathfrak{q}$-subring $S$ such that $R \subseteq S$ and the map $S \rightarrow T/M^2$ is onto.

\begin{lemma} \label{making it onto}
     Let $(T,M)$, $C_i$, and $q_i$ for $i \in \mathbb{N}$ be as in Definition \ref{Definition 6.1}. Moreover, suppose that for every $i \in \mathbb{N}$ and for every $P \in \Ass (T/q_iT)$, we have that $P \subseteq Q$ for some $Q \in C_i$. Let $(R, R \cap M)$ be a $Ct\mathfrak{q}$-subring of $T$ such that $q_iT \cap R = q_iR$ for all $i \in \mathbb{N}$ and let $u+J \in T/J$ where $J$ is an ideal of $T$ with $J \not\subseteq Q$ for all $Q \in \bigcup_{i \in \mathbb{N}}C_i$. Then there exists a $Ct\mathfrak{q}$-subring $S$ of $T$ that satisfies the following conditions:
    \begin{enumerate}[(i)]
        \item $R \subseteq S \subseteq T$,
        \item $|R|=|S|$,
        \item $q_iT \cap S=q_iS$ for all $i \in \mathbb{N}$,
        \item $u+J$ is in the image of the map $S \rightarrow T/J$, and
        \item If $u \in J$, then $S \cap J \not\subseteq Q$ for all $Q \in \bigcup_{i \in \mathbb{N}}C_i$.
    \end{enumerate}
\end{lemma}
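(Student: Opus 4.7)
The plan is to find an element $x \in u + J$ such that for every $Q \in \bigcup_{i \in \mathbb{N}} C_i$, the image $x + Q \in T/Q$ is transcendental over $R/(R \cap Q)$. Once $x$ is in hand, I will invoke Lemma \ref{adjoining transcendentals} to conclude that $R' := R[x]_{(R[x] \cap M)}$ is a $Ct\mathfrak{q}$-subring of $T$, and then apply Lemma \ref{lemma 6.5} to pass to $S := F_{R'} \cap T$, which is the $Ct\mathfrak{q}$-subring with $q_iT \cap S = q_iS$ that we want.

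To construct $x$, I will apply Proposition \ref{stronger coset avoidance} with $I = J$ and $C = \bigcup_{i \in \mathbb{N}} C_i$ (a countable union of countable sets, hence countable; the incomparability hypothesis of Proposition \ref{stronger coset avoidance} follows from the hypothesis in Definition \ref{Definition 6.1} that if $C_i \neq C_j$ then no element of $C_i$ is contained in an element of $C_j$). For each $Q \in C$, the elements of $T/Q$ algebraic over $R/(R \cap Q)$ form a set of cardinality at most $|R|$, since $R$ is infinite. Choosing one preimage in $T$ for each such algebraic element and then translating by $-u$, I build a subset $D \subseteq T$ of cardinality at most $|R| \cdot \aleph_0 = |R| < |T|$. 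Since $J \not\subseteq Q$ for every $Q \in \bigcup_i C_i$, Proposition \ref{stronger coset avoidance} yields $j \in J$ avoiding every coset $(r - u) + Q$ with $r \in D$ and $Q \in C$. Setting $x := u + j$, we get $x + J = u + J$, while for each $Q$ the image $x + Q$ avoids every algebraic coset representative of $T/Q$ over $R/(R \cap Q)$, hence is transcendental over $R/(R \cap Q)$.

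Now Lemma \ref{adjoining transcendentals} gives that $R' = R[x]_{(R[x] \cap M)}$ is a $Ct\mathfrak{q}$-subring of $T$, and Lemma \ref{lemma 6.5} applied to $R'$ produces $S = F_{R'} \cap T$ as a $Ct\mathfrak{q}$-subring with $|S| = |R'| = |R|$ and $q_iT \cap S = q_iS$ for all $i$; this takes care of conditions $(i)$, $(ii)$, and $(iii)$. Condition $(iv)$ is immediate since $x \in S$ and $x \equiv u \pmod{J}$. For condition $(v)$, when $u \in J$ we also have $j \in J$, hence $x = u + j \in S \cap J$; since $x + Q$ is transcendental over $R/(R \cap Q)$ it is in particular nonzero in $T/Q$, so $x \notin Q$ for every $Q \in \bigcup_i C_i$, giving $S \cap J \not\subseteq Q$ for every such $Q$.

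The main obstacle is the cardinality bookkeeping that legitimates the application of Proposition \ref{stronger coset avoidance}: one must ensure that the total collection of ``bad'' cosets, amassed over all countably many $Q \in \bigcup_i C_i$ and all at most $|R|$ algebraic elements per quotient, still has cardinality strictly less than $|T|$. This rests on Proposition \ref{Yu 2.10} (so that $|T| \geq c$ and $|T|$ absorbs countable unions) together with the $Ct\mathfrak{q}$-subring bound $|R| < |T|$; the rest of the argument is a direct chaining of Lemmas \ref{adjoining transcendentals} and \ref{lemma 6.5} together with the translation trick that forces $x$ into the prescribed coset $u + J$.
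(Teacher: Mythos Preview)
Your proposal is correct and follows essentially the same route as the paper: build a ``bad'' set $D$ of coset representatives making $u+t$ algebraic modulo some $Q$, apply Proposition \ref{stronger coset avoidance} with $I=J$ to find $j\in J$ avoiding all such cosets, set $x=u+j$, then chain Lemma \ref{adjoining transcendentals} and Lemma \ref{lemma 6.5}. The only cosmetic wrinkle is that after saying $D$ is already translated by $-u$ you then refer to cosets ``$(r-u)+Q$ with $r\in D$,'' which double-counts the shift; the intended cosets are $d+Q$ for $d\in D$, and with that fix your argument matches the paper's exactly.
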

\begin{proof}
    Let $Q \in \bigcup_{i \in \mathbb{N}}C_i$ and let $D_{(Q)}$ be a full set of coset representatives of the cosets $t+Q \in T/Q$ that make $(u+t)+Q$ algebraic over $R/(R\cap Q)$. Note that, since $R$ is infinite, $|D_{(Q)}|\leq|R|$ for each $Q \in \bigcup_{i \in \mathbb{N}}C_i$. Let $D=\bigcup_{Q \in \bigcup_{i \in \mathbb{N}}C_i} D_{(Q)}$ and note that $|D|<|T|$. 
    
   Now use Proposition $\ref{stronger coset avoidance}$ with $C = \bigcup_{i \in \mathbb{N}}C_i$ and $I=J$, to find $x \in J$ such that $x \not\in \bigcup \{r+Q \given r \in D, \ Q \in C \}$. Since $(u+x)+Q$ is transcendental over $R/(R\cap Q)$ for all $Q \in \bigcup_{i \in \mathbb{N}}C_i$ and the hypotheses of Lemma $\ref{adjoining transcendentals}$ are met, $S'=R[u+x]_{(R[u+x]\cap M)}$ is a $Ct\mathfrak{q}$-subring of $T$. Now, use Lemma \ref{lemma 6.5} to obtain a $Ct\mathfrak{q}$-subring $S$ of $T$ such that $|R|=|S'|=|S|$, $S' \subseteq S \subseteq T$, and $q_iT \cap S = q_iS$ for all $i \in \mathbb{N}$.

    Since $S' \subseteq S$, the image of $S$ in $T/J$ contains $(u+x)+J = u+J$. Furthermore, if $u \in J$, then $u+x \in J\cap S$, but since $(u+x)+Q$ is transcendental over $R/R \cap Q$ for each $Q \in \bigcup_{i \in \mathbb{N}}C_i$, we have that $u +x \notin Q$. Therefore, $J \cap S \not\subseteq Q$ for all $Q \in \bigcup_{i \in \mathbb{N}}C_i$.
\end{proof}

We now prove a sequence of lemmas that are fundamental in showing that $IT \cap A = I$ for every finitely generated ideal $I$ of $A$.

\begin{lemma}\label{Q equal q_i}
    Let $(T,M)$, $C_i$, and $q_i$ for $i \in \mathbb{N}$ be as in Definition \ref{Definition 6.1}. Let $(R, R\cap M)$ be a quasi-local subring of $T$ containing $q_i$ for all $i \in \mathbb{N}$. Also suppose that for all $i \in \mathbb{N}$ and for all $Q \in C_i$ we have $q_iT \cap R = q_iR$ and $F_R \cap Q \subseteq q_iT$. Then $Q \cap R =q_iR$ for all $Q \in C_i$. In particular, $q_iR$ is a prime ideal of $R$.
\end{lemma}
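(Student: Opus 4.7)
The plan is to establish the two inclusions $q_iR \subseteq Q \cap R$ and $Q \cap R \subseteq q_iR$ separately, and then deduce the prime ideal statement from the fact that contractions of prime ideals are prime.

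For the inclusion $q_iR \subseteq Q \cap R$, I would just unpack the setup of Definition \ref{Definition 6.1}: by assumption, $q_i \in \bigcap_{Q' \in C_i} Q'$, so in particular $q_i \in Q$; combined with $q_i \in R$, this gives $q_iR \subseteq Q \cap R$ immediately.

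For the reverse inclusion $Q \cap R \subseteq q_iR$, I would take an arbitrary $x \in Q \cap R$. Since $x \in R \subseteq F_R$, we have $x \in F_R \cap Q$, and the hypothesis that $F_R \cap Q \subseteq q_iT$ gives $x \in q_iT$. Combined with $x \in R$, this yields $x \in q_iT \cap R$. Finally, invoking the assumption $q_iT \cap R = q_iR$ gives $x \in q_iR$, as desired.

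Having established $Q \cap R = q_iR$, the prime ideal conclusion follows automatically: $Q$ is a prime ideal of $T$, so its contraction $Q \cap R$ to the subring $R$ is a prime ideal of $R$, hence $q_iR$ is prime in $R$. There is no real obstacle here — the lemma is essentially a direct bookkeeping consequence of the hypotheses, and the only thing to be careful about is making sure we use both hypotheses in the correct order (first $F_R \cap Q \subseteq q_iT$ to move from $Q$ into $q_iT$, then $q_iT \cap R = q_iR$ to contract back into $R$).
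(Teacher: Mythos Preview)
Your proof is correct and matches the paper's own argument essentially line for line: both establish $q_iR \subseteq Q \cap R$ from $q_i \in Q$, then use the chain $R \cap Q \subseteq F_R \cap Q \subseteq q_iT$ together with $q_iT \cap R = q_iR$ for the reverse inclusion, and conclude primality from contraction.
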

\begin{proof}
      Let $i \in \mathbb{N}$ and let $Q \in C_i$. Notice that $q_iR \subseteq q_iT \cap R \subseteq Q \cap R$. Now, if $x \in R \cap Q$ then $x \in R \cap Q \subseteq F_R \cap Q \subseteq q_iT$, and so $x \in q_iT \cap R = q_iR$.  It follows that $Q \cap R = q_iR$.
%      
%      , so $R \cap Q \subseteq R \cap q_iT = q_iR$ by assumption. Thus, $Q \cap R=q_iR$ for al. Moreover, since $Q \in \Spec(T)$, it follows that $q_iR$ is a prime ideal of $R$.
\end{proof}

%Note that this lemma implies that if $A$ is a precompletion of $T$, then every $Q \in C_i$ is in the formal fiber of $q_iA$ for each $i \in \Phi$.

% Since $I$ is a finitely generated ideal of $A$, we can rewrite it as $I = (y_1,...,y_m)$. When we prove that $IT \cap A=IA$, we need to make sure that these generators are not contained in any of the $Q$'s in $C$. This might not always be true, so we may need to replace these specific generators of $I$ with different generators that still generate the same ideal.

%When proving that $IT \cap R = IR$ for all finitely generated ideals $I$ of $R$, we run into a few issues. Given a finitely generated ideal $I$ of $R$, our proof requires that at least one generator of $I$ avoids all the principal ideals $q_iR$ of $R$ for $i \in \Phi$. If no such generator exists in $R$, we find some larger $Ct\mathfrak{q}$-subring $R'$ of $T$ such that $IR'$ contains a generator that avoids $q_i R'$ for all $i \in \Phi$. The following lemma allows us to do this.

Many of our results for $Ct\mathfrak{q}$-subrings have analogous statements for $pca$-subrings in \cite{Chatlos}.  We note that the next lemma does not have an analogous result in \cite{Chatlos} and it is a key component in generalizing the main theorem from \cite{Chatlos}. 

\begin{lemma} \label{new generators for I}
     Let $(T,M)$, $C_i$, and $q_i$ for $i \in \mathbb{N}$ be as in Definition \ref{Definition 6.1}. Moreover, suppose that for every $i \in \mathbb{N}$ and for every $P \in \Ass (T/q_iT)$, we have that $P \subseteq Q$ for some $Q \in C_i$. Let $(R, R \cap M)$ be a $Ct\mathfrak{q}$-subring of $T$ such that $q_iT \cap R = q_iR$ for all $i \in \mathbb{N}$ and let $I=(y_1,\dots,y_m)R$ be a finitely generated ideal of $R$. If $I \nsubseteq q_i R$ for all $i \in \mathbb{N}$, then there exists a $Ct\mathfrak{q}$-subring $R'$ of $T$ such that
     \begin{enumerate}[(i)]
         \item $IR' = (y_1,\dots,y_m)R' = (\Tilde{y}, y_2, \dots, y_m)R'$ where $\Tilde{y} \notin q_i R'$ for all $i \in \mathbb{N}$,
         \item $R \subseteq R' \subseteq T$, 
         \item $|R'| = |R|$, and
         \item $q_i T \cap R' = q_i R'$ for all $i \in \mathbb{N}$.
     \end{enumerate}
\end{lemma}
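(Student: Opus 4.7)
The plan is to construct $R'$ by iteratively adjoining coefficients $s_2, \ldots, s_m \in T$ to $R$ so that $\tilde{y} := y_1 + s_2 y_2 + \cdots + s_m y_m$ lies in $R'$ and satisfies $\tilde{y} \notin q_iT$ for every $i$. The first step is to rephrase the hypothesis in terms of the primes in $C := \bigcup_{i \in \mathbb{N}} C_i$: because $R$ is a $Ct\mathfrak{q}$-subring with $q_iT \cap R = q_iR$, Lemma \ref{Q equal q_i} gives $Q \cap R = q_iR$ for every $Q \in C_i$, and hence $I \not\subseteq q_iR$ forces $IT \not\subseteq Q$ for every $Q \in C$. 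Since $q_iT \subseteq Q$ whenever $Q \in C_i$, it is enough to produce $\tilde{y}$ with $\tilde{y} \notin Q$ for every $Q \in C$, because this automatically yields $\tilde{y} \notin q_iT$ for every $i$.

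To handle all $Q \in C$ at once, I would set up the filtration $C_j^* := \{Q \in C : y_k \in Q \text{ for all } k > j\}$, which is monotone: $C_1^* \subseteq C_2^* \subseteq \cdots \subseteq C_m^* = C$. Writing $v_j := y_1 + s_2 y_2 + \cdots + s_j y_j$, the aim is to maintain the invariant $v_j \notin Q$ for every $Q \in C_j^*$. The base case $j = 1$ (where $v_1 = y_1$) holds because any $Q \in C_1^*$ contains $y_2, \ldots, y_m$, and $IT \not\subseteq Q$ then forces $y_1 \notin Q$.

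For the inductive step at stage $j \geq 2$, the decisive observation is that for $Q \in C_{j-1}^* \subseteq C_j^*$ we already have $y_j \in Q$, so $v_j \equiv v_{j-1} \not\equiv 0 \pmod Q$ regardless of the choice of $s_j$, while for $Q \in C_j^* \setminus C_{j-1}^*$ we have $y_j \notin Q$, and the equation $\bar{s_j}\bar{y_j} = -\bar{v_{j-1}}$ in the domain $T/Q$ has at most one solution, ruling out at most one coset of $Q$ for $s_j$. To simultaneously invoke Lemma \ref{adjoining transcendentals}, I also need $s_j + Q$ to be transcendental over $R^{(j-1)}/(R^{(j-1)} \cap Q)$ for every $Q \in C$, which forbids at most $|R|$ further cosets per $Q$. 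Since $|C| \leq \aleph_0$ and $|R^{(j-1)}| = |R| < |T|$, the union of bad cosets admits a set of representatives of size less than $|T|$, and Proposition \ref{stronger coset avoidance} (applied with $I = T$ and $C' = C$) produces the required $s_j \in T$.

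Having chosen $s_j$, Lemma \ref{adjoining transcendentals} shows that $R^{(j-1)}[s_j]_{(R^{(j-1)}[s_j] \cap M)}$ is a $Ct\mathfrak{q}$-subring, and Lemma \ref{lemma 6.5} then yields $R^{(j)} := F_{R^{(j-1)}[s_j]_{(R^{(j-1)}[s_j] \cap M)}} \cap T$ satisfying $q_iT \cap R^{(j)} = q_iR^{(j)}$ and $|R^{(j)}| = |R|$. After stage $j = m$, the ring $R' := R^{(m)}$ contains each $s_j$ and therefore contains $\tilde{y} = v_m$; the invariant gives $\tilde{y} \notin Q$ for every $Q \in C$, so $\tilde{y} \notin q_iT$, and the contraction $q_iT \cap R' = q_iR'$ forces $\tilde{y} \notin q_iR'$. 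The ideal equality $(y_1,\ldots,y_m)R' = (\tilde{y}, y_2, \ldots, y_m)R'$ is immediate from $\tilde{y} = y_1 + \sum_{j \geq 2} s_j y_j$ with all $s_j \in R'$. The main obstacle is the correct design of the filtration $C_j^*$: the potentially troublesome case $y_j \in Q$ is absorbed by the previous stage's invariant, so that at each stage only a single non-membership coset per $Q$ (together with the transcendence cosets) needs to be avoided, keeping the construction within the reach of Proposition \ref{stronger coset avoidance}.
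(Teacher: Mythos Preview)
Your proof is correct and takes a genuinely different route from the paper's. The paper adjoins a \emph{single} transcendental $t\in M$ and sets $\tilde y=y_1+at$, where $a\in I$ is found by ordinary finite prime avoidance after invoking Lemma~\ref{krull's intersection} to reduce the set $\{i:y_1\in q_iR\}$ to a finite list; the ideal equality then comes from Nakayama (using $at\in MI$), and $\tilde y\notin q_iT$ is deduced by first checking $q_iT\cap R[t]=q_iR[t]$. Your argument instead adjoins $m-1$ transcendentals $s_2,\dots,s_m$ and uses the filtration $C_j^*$ to propagate the invariant $v_j\notin Q$; this sidesteps Lemma~\ref{krull's intersection} and Nakayama entirely, and the ideal equality $(\tilde y,y_2,\dots,y_m)R'=(y_1,\dots,y_m)R'$ is immediate since each $s_j\in R'$. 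A small redundancy worth noting: for $Q\in C_j^*\setminus C_{j-1}^*$ the single ``non-membership'' coset $-\overline{v_{j-1}}/\overline{y_j}+Q$ is already algebraic over $R^{(j-1)}/(R^{(j-1)}\cap Q)$ (because $v_{j-1},y_j\in R^{(j-1)}$), so it is automatically excluded once you impose transcendence of $s_j+Q$; you need only the transcendence cosets in your application of Proposition~\ref{stronger coset avoidance}. The paper's version is more economical (one extension rather than $m-1$), while yours works uniformly at the level of the primes in $C$ and needs fewer auxiliary lemmas.
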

\begin{proof}
    Without loss of generality, we may assume that $y_j \neq 0$ for all $j = 1,2, \ldots,m$. 
%    We first show that $y_1$ is contained in at most finitely many ideals of the form $q_iR$. 
    By Lemma \ref{Q equal q_i}, $q_iR$ is a prime ideal of $R$ for all $i \in \mathbb{N}$.
    %Furthermore, $q_iR$ and $q_jR$ are incomparable whenever $i \neq j$ by Definition \ref{Definition 6.1}. 
    Thus, by Lemma \ref{krull's intersection}, $y_1$ is contained in at most finitely many  ideals of the form $q_i R$. If $y_1 \not\in q_iR$ for all $i \in \mathbb{N}$ then define $R' = R$ and $\Tilde{y} = y_1$, and observe that the lemma holds in this case. So for the rest of the proof, assume that $y_1 \in q_iR$ for at least one $i \in \mathbb{N}$.

     We will define $\Tilde{y} = y_1 + at$ for some carefully chosen $a \in I$ and $t \in T$. Assume, without loss of generality, that $y_1 \in q_kR$ for all $1 \leq k \leq n$ and if $k > n$ with $y_1 \in q_kR$ then $q_kR = q_jR$ for some $1 \leq j \leq n$. Use the Prime Avoidance Theorem to find $a \in R$ such that $a \in I$ and $a \notin \bigcup_{k=1}^n q_kR$. Note that we have chosen $a \in I$ such that for $i \in \mathbb{N}$, if $y_1 \in q_i R$, then $a \notin q_i R$.

    To choose $t$, let $Q \in \bigcup_{i \in \mathbb{N}}C_i$ and let $D_{(Q)}$ be a full set of coset representatives of the elements $t + Q$ of $T/Q$ that are algebraic over $R / (R \cap Q)$. Let $D = \bigcup_{Q \in \bigcup_{i \in \mathbb{N}}C_i} D_{(Q)}$. Now, $T$ is uncountable by Proposition \ref{Yu 2.10}. Since $|R| < |T|$, we have $|R / (R \cap Q)| < |T|$ and the algebraic closure of $R / (R \cap Q)$ in $T/Q$ also has cardinality smaller than $|T|$. Therefore, for every $Q \in \bigcup_{i \in \mathbb{N}}C_i$, $|D_{(Q)}| < |T|$. Recall that there are countably many $C_i$'s and each $C_i$  has countably many elements. Thus, $|D|<|T|$.  Note that $M \nsubseteq Q$ for all $Q \in \bigcup_{i \in \mathbb{N}}C_i$ because the elements of each $C_i$ are nonmaximal. Apply Proposition \ref{stronger coset avoidance} with $C = \bigcup_{i \in \mathbb{N}}C_i$ and $I = M$ to find $t \in M$ such that $t + Q$ is transcendental over $R / (R \cap Q)$ for all $Q \in \bigcup_{i \in \mathbb{N}}C_i$.
    
    By Lemma \ref{adjoining transcendentals}, $S = R[t]_{(R[t] \cap M)}$ is a $Ct\mathfrak{q}$-subring of $T$.  Note that $R \subseteq S \subseteq T$ and $|S| = |R| < |T|$. If $Q \in C_1$ then, by Lemma \ref{Q equal q_i}, $Q \cap R = q_1R$. By Lemma \ref{transcendentals}, $t$ is transcendental over $R$. We claim that $\Tilde{y} = y_1 + a t \not\in q_i R[t]$ for all $i \in \mathbb{N}$. By way of contradiction, assume that $\Tilde{y} \in q_i R[t]$ for some $i \in \mathbb{N}$. Since $t$ is transcendental over $R$, $y_1+at \in q_i R[t]$ implies that $y_1 \in q_iR$ and $a \in q_iR$. However, this is a contradiction since we chose $a$ such that $y_1 \in q_i R$ implies that $a \notin q_i R$. Therefore, $\Tilde{y} \notin q_i R[t]$ for all $i \in \mathbb{N}$. 
    %and so $\Tilde{y} \not\in q_iS$ for all $i \in \mathbb{N}$.
    Now suppose that $i \in \mathbb{N}$ and $f \in q_iT \cap R[t]$. Then, for some $a_j \in R$ we have $f = a_nt^n + \cdots + a_1t + a_0 \in q_iT \subseteq Q$ for all $Q \in C_i$. Since $t + Q$ is transcendental over $R / (R \cap Q)$, we have $a_j \in R \cap Q = q_iR$. It follows that $f \in q_iR[t]$ and so we have $q_iT \cap R[t] = q_iR[t]$.  As a consequence, $\Tilde{y} \notin q_i T$ for all $i \in \mathbb{N}$.

   We now show that $(y_1,\dots,y_m)S = (\Tilde{y},\dots,y_m)S$. Since $\Tilde{y} = y_1 + at$ with $a \in (y_1, \ldots, y_m)S$, we have that $(\Tilde{y},\dots,y_m)S \subseteq (y_1,\dots,y_m)S = IS$.
   Notice that $\Tilde{y} - y_1 = at \in (M \cap S)IS$, and so we have $(\Tilde{y}, y_2, \dots, y_m)S + (M \cap S)IS = IS$. By Nakayama's Lemma, we have $IS = (\Tilde{y}, y_2, \dots, y_m)S$. 
   %It follows that $(y_1,\dots,y_m)S = (\Tilde{y},\dots,y_m)S$.

   By Lemma \ref{lemma 6.5}, $R'= F_{S} \cap T$ is a $Ct\mathfrak{q}$-subring of $T$ such that $S \subseteq R' \subseteq T$, $|R'| = |S|$, and and $q_i T \cap R' = q_i R'$ for all $i \in \mathbb{N}$. Note also that, since $IS = (\Tilde{y}, y_2, \dots, y_m)S$, we have $IR' = (y_1,\dots,y_m)R' = (\Tilde{y}, y_2, \dots, y_m)R'$. 
   
   We now claim that $\Tilde{y} \notin q_i R'$ for all $i \in \mathbb{N}$. Suppose on the contrary that $\Tilde{y} \in q_i R'$ for some $i \in \mathbb{N}$. Then $\Tilde{y} \in q_iR' \subseteq q_iT$, a contradiction.  Hence, $R'$ is the desired $Ct\mathfrak{q}$-subring of $T$.
\end{proof}

We are ready to show that, if $I$ is a finitely generated ideal of a $Ct\mathfrak{q}$-subring $R$ of $T$, and $c \in IT\cap R$, then we can find a larger $Ct\mathfrak{q}$-subring $S$ of $T$ with $c\in IS$. The statement of the lemma corresponds to the statement of Lemma 2.9 from \cite{Chatlos}, but for $Ct\mathfrak{q}$-subrings rather than $p$ca-subrings. The first part of the proof of Lemma \ref{closing up ideals} below, in particular, is very close to the first part of the proof of Lemma 2.9 in \cite{Chatlos}.

\begin{lemma} \label{closing up ideals}
 Let $(T,M)$, $C_i$, and $q_i$ for $i \in \mathbb{N}$ be as in Definition \ref{Definition 6.1}. Moreover, suppose that for every $i \in \mathbb{N}$ and for every $P \in \Ass (T/q_iT)$, we have that $P \subseteq Q$ for some $Q \in C_i$. Let $(R, R \cap M)$ be a $Ct\mathfrak{q}$-subring of $T$ such that $q_iT \cap R = q_iR$ for all $i \in \mathbb{N}$. Let $I$ be a finitely generated ideal of $R$ and let $c \in IT \cap R$. Then there exists a $Ct\mathfrak{q}$-subring $S$ of $T$ meeting the following conditions:
    \begin{enumerate}[(i)]
        \item $R \subseteq S \subseteq T$,
        \item $|S| = |R|$,
        \item $c \in IS$, and
        \item $q_iT \cap S = q_iS$ for all $i \in \mathbb{N}$.
    \end{enumerate}
\end{lemma}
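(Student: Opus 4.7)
The plan is to reduce to the case $I\not\subseteq q_iR$ for every $i$ and then to use Lemmas~\ref{new generators for I},~\ref{adjoining transcendentals}, and~\ref{lemma 6.5} in combination to build $S$ by an iterated transcendental adjunction that places new coefficients for $c$ inside $S$. If $c=0$, take $S=R$. Otherwise, whenever $I\subseteq q_{i_0}R$, write $y_j=q_{i_0}z_j$ and use $q_{i_0}T\cap R=q_{i_0}R$ together with the regularity of $q_{i_0}$ to factor $c=q_{i_0}c'$ with $c'\in JT\cap R$ where $J=(z_1,\dots,z_m)R$; producing $S$ with $c'\in JS$ then yields $c\in IS$ after multiplying by $q_{i_0}$. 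Iterating this step would force $c\in M^k$ for every $k$, contradicting Krull's intersection theorem, so after finitely many reductions we may assume $I\not\subseteq q_iR$ for every $i$.

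Now apply Lemma~\ref{new generators for I} to pass to a $Ct\mathfrak{q}$-subring $R'\supseteq R$ with $|R'|=|R|$, $q_iT\cap R'=q_iR'$, and $IR'=(\tilde y,y_2,\dots,y_m)R'$ where $\tilde y\notin q_iT$ for every $i$; by Lemma~\ref{Q equal q_i}, $\tilde y\notin Q$ for every $Q\in\bigcup_iC_i$. Write $c=t_1\tilde y+t_2y_2+\cdots+t_my_m$ with $t_j\in T$ and observe the parametric identity
\[
c=\bigl(t_1-\textstyle\sum_{j\geq 2}x_jy_j\bigr)\tilde y+\sum_{j\geq 2}(t_j+\tilde y x_j)y_j
\]
valid for any $x_2,\dots,x_m\in T$, so that setting $s_j=t_j+\tilde y x_j$ for $j\geq 2$ and $s_1=t_1-\sum_{j\geq 2}x_jy_j$ gives $c=s_1\tilde y+\sum_{j\geq 2}s_jy_j$. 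The plan is to choose the $x_j$ inductively so that each $s_j+Q$ is transcendental over $R_{j-1}/(R_{j-1}\cap Q)$ for every $Q\in\bigcup_iC_i$, where $R_1=R'$ and each successive $R_j$ is built from $R_{j-1}$ by adjoining $s_j$ via Lemma~\ref{adjoining transcendentals} and closing up via Lemma~\ref{lemma 6.5}.

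The main obstacle is the choice of $x_j$, because the coset avoidance condition sits most naturally on $z=\tilde y x_j$ rather than on $x_j$ itself. Since $\tilde y\notin Q$ for any $Q\in\bigcup_iC_i$, the ideal $\tilde y T$ is not contained in any such $Q$. For each such $Q$, gather lifts to $T$ of the at-most-$|R_{j-1}|$ elements of $T/Q$ algebraic over $R_{j-1}/(R_{j-1}\cap Q)$, translate by $-t_j$, and assemble the resulting set $D_j$, which satisfies $|D_j|<|T|$. Proposition~\ref{stronger coset avoidance} with $I=\tilde y T$, $C=\bigcup_iC_i$, and $D=D_j$ produces $z\in\tilde y T$ outside $\bigcup\{r+Q:r\in D_j,\,Q\in\bigcup_iC_i\}$. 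Since $R'\cap P=(0)$ for every $P\in\Ass(T)$, the element $\tilde y$ is regular in $T$, so $x_j=z/\tilde y\in T$ is well defined, and $s_j+Q=(t_j+z)+Q$ is transcendental over $R_{j-1}/(R_{j-1}\cap Q)$ for every such $Q$. Lemma~\ref{adjoining transcendentals} then makes $R_{j-1}[s_j]_{(R_{j-1}[s_j]\cap M)}$ a $Ct\mathfrak{q}$-subring, and Lemma~\ref{lemma 6.5} yields $R_j$ with $q_iT\cap R_j=q_iR_j$ and the same cardinality as $R_{j-1}$.

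After $m-1$ such adjunctions, $R_m$ contains every $s_j$ for $j\geq 2$ as well as every $y_j$ and $\tilde y$. Because $\tilde y\in R_m$ is regular in $T$ and $\tilde y s_1=c-\sum_{j\geq 2}s_jy_j\in R_m$, we have $s_1\in F_{R_m}\cap T$. Since $R_m$ was produced by Lemma~\ref{lemma 6.5}'s closure construction, $F_{R_m}\cap T=R_m$, so $s_1\in R_m$. Setting $S=R_m$ yields $c=s_1\tilde y+\sum_{j\geq 2}s_jy_j\in(\tilde y,y_2,\dots,y_m)S=IS$, and $R\subseteq S\subseteq T$, $|S|=|R|$, and $q_iT\cap S=q_iS$ follow by construction.
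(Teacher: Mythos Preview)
Your proof is correct, and the overall architecture differs from the paper's in an interesting way. The paper inducts on the number of generators $m$: the base case $I=aR$ is handled by noting that $u=c/a$ satisfies $F_{R[u]}=F_R$, so $R[u]_{(R[u]\cap M)}$ is directly a $Ct\mathfrak{q}$-subring; the inductive step factors out the relevant $q_i$-powers all at once (using Lemmas~\ref{factoring}, \ref{largestk}, \ref{krull's intersection}), applies Lemma~\ref{new generators for I}, adjoins \emph{one} transcendental of the form $t_1+y_2't$, and then invokes the induction hypothesis on the remaining $m-1$ generators. Your argument instead peels off one $q_{i_0}$ at a time (terminating by Krull's intersection), applies Lemma~\ref{new generators for I} just once to obtain the pivot $\tilde y$, and then adjoins $m-1$ transcendentals $s_2,\dots,s_m$ in a single pass, using $I=\tilde yT$ in Proposition~\ref{stronger coset avoidance} each time because $\tilde y\notin Q$ for every $Q\in\bigcup_iC_i$. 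The last coefficient $s_1$ is then recovered for free from the closure property $F_{R_m}\cap T=R_m$ guaranteed by Lemma~\ref{lemma 6.5}; this is exactly the fraction-field trick the paper uses in its base case, but you place it at the end rather than at the bottom of an induction.

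What your approach buys is a cleaner linear structure with a single invocation of Lemma~\ref{new generators for I}, whereas the paper re-enters that lemma at each inductive level. What the paper's approach buys is a slightly more self-contained base case that makes the fraction-field idea explicit without appealing to the internal construction of Lemma~\ref{lemma 6.5}. One small point worth noting in your write-up: when $m=1$ you have $R_m=R_1=R'$, and the claim $F_{R'}\cap T=R'$ relies on the fact (visible in the proof of Lemma~\ref{new generators for I}) that $R'$ is itself produced via Lemma~\ref{lemma 6.5}; you might make this dependence explicit, or simply apply Lemma~\ref{lemma 6.5} once more at the end to be safe.
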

\begin{proof}
    We induct on the number of generators of $I$. Suppose $I = aR$. If $a = 0$, then $c = 0$. So, $S = R$ is the desired $Ct\mathfrak{q}$-subring. If $a \neq 0$, then let $c = au$ for some $u \in T$.  We show that $S' = R[u]_{(R[u] \cap M)}$ is a $Ct\mathfrak{q}$-subring satisfying the first three conditions and then we apply Lemma \ref{lemma 6.5} to find $S$ such that all conditions are satisfied. Note that $|R|=|R[u]|$, so $R[u]$ satisfies condition $(i)$ of Definition \ref{Definition 6.1}. Let $P \in \Ass(T)$ and let $f \in R[u] \cap P$. Then $f = r_nu^n + \cdots + r_1u + r_0$ where $r_n, \dots ,r_1, r_0 \in R$. It follows that
    \[a^nf = r_nc^n + \cdots + r_1ca^{n-1} + r_0a^n,\]
    and we see that $a^nf \in R \cap P=(0)$ because by assumption, $R$ is a $Ct \mathfrak{q}$-subring. Since $R$ is a domain, $a$ is not a zero-divisor in $T$. It follows that $f=0$ and $R[u] \cap P=(0)$ for all $P \in \Ass(T)$. Therefore, $R[u]$ satisfies condition $(ii)$ of Definition \ref{Definition 6.1}. 
    Now let $i \in \mathbb{N}$ and suppose $x \in F_{R[u]}\cap Q$ for some $Q \in C_i$. Then, $f_2x=f_1$ for $f_1,f_2 \in R[u]$. Find a positive integer $m$ such that $a^mf_1, a^mf_2 \in R$. Then, $a^mf_2x=a^mf_1$, and so $x \in F_{R}\cap Q \subseteq q_iT$ by the properties of $Ct\mathfrak{q}$-subrings. It follows that $R[u]$ satisfies condition $(iii)$ of Definition \ref{Definition 6.1}. By Lemma $\ref{localization lemma}$, $S'= R[u]_{(R[u] \cap M)}$ is a $Ct\mathfrak{q}$-subring of $T$. Lastly, we apply Lemma $\ref{lemma 6.5}$ to find a $Ct\mathfrak{q}$-subring $S$ of $T$ that satisfies all four properties of the lemma.

    Now suppose that $I = (y_1, \dots,y_m)R$ is generated by $m > 1$ elements, and assume that the statement holds for all ideals with $m - 1$ generators. Assume, without loss of generality, that $y_j \neq 0$ for all $1 \leq j \leq m$. Since $c \in IT$, there exists $t_1, t_2,\dots ,t_m \in T$ such that $c = y_1 t_1 + y_2 t_2 + \cdots + y_m t_m$. By Lemma \ref{Q equal q_i}, $q_iR$ is a prime ideal of $R$ for all $i \in \mathbb{N}$. If $I$ were contained in infinitely many ideals of the form $q_iR$ for $i \in \mathbb{N}$, then $y_1$ would be in infinitely many ideals of the form $q_iR$, violating Lemma \ref{krull's intersection}.  It follows that $I$ is contained in at most finitely many ideals of the form $q_iR$. If $I \not\subseteq q_iR$ for all $i \in \mathbb{N}$, then let $I' = I$. 
 On the other hand, if $I \subseteq q_iR$ for at least one $i \in \mathbb{N}$ then, without loss of generality, let $I$ be contained in $q_1 R, \dots, q_s R$ where, if $i,j \in \{1,2, \ldots ,s\}$, then $q_iR = q_j R$ if and only if $i = j$, and, if $j > s$ and $I \subseteq q_jR$, then $q_jR = q_iR$ for some $i \in \{1,2, \ldots ,s\}$.  Now, $y_1 \in q_1R$ and so, by Lemma \ref{largestk}, there is a largest positive integer $k_1$ such that $y_1 \in q_1^{k_1}R$. Similarly, there is a largest positive integer $k_2$ such that $y_2 \in q_1^{k_2}R$. Continue to find $k_3, \ldots ,k_s$.  Let $\ell_1 = \min{\{k_1,\ldots, k_s\}}$. In a similar manner, define $\ell_2, \ldots ,\ell_s$ using $q_2R, \ldots ,q_sR$.
 Now $c = q_1^{\ell_1} \cdots q_s^{\ell_s} (y_1' t_1 + \cdots + y_m' t_m)$ where $y'_1, \ldots ,y'_m \in R$ with $y_k = q_1^{\ell_1} \cdots q_s^{\ell_s}y_k'$ for all $1 \leq k \leq m$ and, if $j \in \{1,2, \ldots ,s\}$ then there is a $y' \in \{y'_1, \ldots ,y'_m\}$ satisfying $y' \not\in q_jR$. Letting $I' = (y_1', \dots, y_m')$, it follows that $I' \not\subseteq q_jR$ for all $j \in \{1,2, \ldots,s\}$. If $j > s$ and $I' \subseteq q_jR$, then $I \subseteq I' \subseteq q_jR$, a contradiction.  Hence, $I' \not\subseteq q_iR$ for all $i \in \mathbb{N}$.
 
 Let $c' = y_1' t_1 + \cdots + y_m' t_m$ and notice that $c = q_1^{\ell_1} \cdots q_s^{\ell_s} c'$. 
 %We claim that $c' \in R$ so that we have $c' \in I'T \cap R$. 
 %To see that $c' \in R$, 
 Now $c \in q_1^{\ell_1}T \cap R = q_1^{\ell_1}R$ by Lemma \ref{factoring} and so $q_1^{\ell_1} \cdots q_s^{\ell_s} c' = q_1^{\ell_1}c_1$ for some $c_1 \in R$. Cancelling, we get that $q_2^{\ell_2} \cdots q_s^{\ell_s}c' = c_1 \in R$. Repeat the argument to cancel $q_2^{\ell_2}, \ldots, q_s^{\ell_s}$ and conclude that $c' \in R.$

%Since, given $j \in \{1,2, \ldots ,s$, there is
% We now claim that $I' \nsubseteq q_i R$ for all $i \in \mathbb{N}$. Since $I \subseteq I'$, if $I \not\subseteq q_iR$ for some $i \in \Phi$, then $I' \not\subseteq q_iR$. So, it suffices to show that $I'$ is not contained in $q_kR \supseteq I$ for $k \in \{1,\dots,s\}$. However, by the maximality of each $\ell_k$, it follows that there must exist a generator $y'_j$ of $I'$ such that $y'_j \notin q_kR$. Therefore, $I' \not\subseteq q_iR$ for all $i \in \Phi$. 
   
    To proceed, use Lemma \ref{new generators for I} to find a $Ct\mathfrak{q}$-subring $R'$ of $T$ such that
    \begin{enumerate}[(i)]
        \item $(y'_1,y_2',\dots,y_m')R' = (\Tilde{y},y_2',\dots,y_m')R'$ where $\Tilde{y} \notin  q_i R'$ for all $i \in \mathbb{N}$,
        \item $R \subseteq R' \subseteq T$,
        \item $|R'| = |R|$, and
        \item $q_i T \cap R' = q_i R'$ for all $i \in \mathbb{N}$.
    \end{enumerate}
    %If there exists a generator of $I'$, $y_j'$, such that $y_j' \notin q_iT \cap R = q_i R$ for all $i \in \Phi$, then we set $R' = R$ and $\Tilde{y} = y_j'$. Otherwise, we use Lemma \ref{new generators for I} to find an $R'$ that satisfies (i)-(iv).

    %We will now find a $Ct\mathfrak{q}$-subring $S'$ that satisfies the first three conditions listed in the statement of this lemma. 
    Without loss of generality, reorder the generators of $I'R' = (\Tilde{y},y_2',\dots,y_m')R'$ so that $y_2' \notin q_i R'$ for all $i \in \mathbb{N}$. Our goal is now to find a $t \in T$ that allows us to adjoin $t_1 + y_2' t$ to $R'$ without disturbing the $Ct\mathfrak{q}$-subring properties. 

    First note that if $Q \in \bigcup_{i \in \mathbb{N}}C_i$ and  $(t_1 + y_2' t) + Q = (t_1 + y_2' t') + Q$ for $t,t' \in T$, then we have that $y_2'(t - t') \in Q$. By Lemma \ref{Q equal q_i}, $Q \cap R' = q_i R'$ for all $Q \in C_i$. Since $y_2' \notin q_iR'$ for all $i \in \mathbb{N}$, we have $y_2' \notin Q$. Thus, $(t - t') \in Q$, and therefore, $t + Q = t' + Q$. As a result, if $t + Q \neq t' + Q$, then $(t_1 + y_2' t) + Q \neq (t_1 + y_2' t') + Q$.

    Let $Q \in \bigcup_{i \in \mathbb{N}}C_i$ and let $D_{(Q)}$ be a full set of coset representatives of elements $t' + Q$ of $T/Q$ that make $(t_1 + y_2' t') + Q$ algebraic over $R' / (R' \cap Q)$. Let $D = \bigcup_{Q \in \bigcup_{i \in \mathbb{N}}C_i} D_{(Q)}$ and note that $|D| < |T|$. Use Proposition \ref{stronger coset avoidance} with $I = T$ and $C = \bigcup_{i \in \mathbb{N}}C_i$ to find an element $t \in T$ such that $t \notin \bigcup \{r + P | r \in D, P \in C \}$. Let $x = t_1 + y_2' t$. Since $x+Q$ is transcendental over $R'/ (R' \cap Q)$ for all $Q \in \bigcup_{i \in \mathbb{N}}C_i$, $R'_1 = R'[x]_{(R'[x] \cap M)}$ is a $Ct\mathfrak{q}$-subring of $T$ by Lemma \ref{adjoining transcendentals}. Note that $|R'_1| = |R'| = |R|$. Using Lemma \ref{lemma 6.5}, let $R''$ be a $Ct\mathfrak{q}$-subring of $T$ such that $R'_1 \subseteq R'' \subseteq T$, $|R''| = |R'_1| = |R|$, and $q_iT \cap R'' = q_iR''$ for all $i \in \mathbb{N}$.

    We now both add and subtract $y_1' y_2' t$ to see that 
    \begin{align*}
        c' &= y_1' t_1 + y_1' y_2' t - y_1' y_2' t + y_2' t_2 + \cdots + y_m' t_m\\ &= y_1' x + y_2'(t_2 - y_1' t) + y_3' t_3 + \cdots + y_m' t_m.
    \end{align*}

    Let $I'' = (y_2',\dots,y_m')R''$ and $c'' = c' - y_1' x$. Then $c'' \in I''T \cap R''$. Use the induction assumption to find a $Ct\mathfrak{q}$-subring $S$ of $T$ such that $|S| = |R''| = |R|$, $R \subseteq R' \subseteq R'' \subseteq S \subseteq T$, $c'' \in I''S$, and $q_iT \cap S = q_iS$ for all $i \in \mathbb{N}$. Then $c' = y_1' x + c'' \in I'S$ since $x \in R'' \subseteq S$ by construction of $R''$. Now, recall that $c = q_1^{\ell_1}  \cdots q_s^{\ell_s} c'$, so, $c \in (q_1^{\ell_1} \cdots  q_s^{\ell_s})I'S$, which implies that $c \in IS$. It follows that $S$ is the desired $Ct\mathfrak{q}$-subring of $T$.
    %So, $S$ is a $Ct\mathfrak{q}$-subring satisfying the first three conditions of the lemma. 
%
%    Finally, we apply Lemma \ref{lemma 6.5} to find a $Ct\mathfrak{q}$-subring $S$ with $R \subseteq R' \subseteq S'\subseteq S \subseteq T$ and $|S| = |R|$ such that $q_i T \cap S = q_i S$ for all $i \in \mathbb{N}$. We know $c \in IS$, since $c \in IS'$ and $S' \subseteq S$.
\end{proof}

Lemma \ref{Lemma 6.11} is the analogous version of Lemma 2.11 in \cite{Chatlos}, and so the proofs are very similar. The lemma shows that we can find a $Ct\mathfrak{q}$-subring $S$ of $T$ that satisfies several of our desired properties. In particular, $S$ satisfies the condition that $IT \cap S = I$ for every finitely generated ideal $I$ of $S$.

% We note that the proofs of the following two lemmas follow from Lemmas 2.11 and 2.12 from \cite{Chatlos} with a few minor modifications, so we omit them. 
 % First, we replace $p$ca-subrings with \ctq-subrings. Next, since \ctq-subrings are infinite, we replace $\Gamma(\cdot)$ with $|\cdot|$. Finally, we use our corresponding lemmas instead of Lemmas 2.4, 2.5, 2.7, 2.8, and 2.9 from \cite{Chatlos}.

 Before stating Lemma \ref{Lemma 6.11}, we introduce the following useful definition.

 \begin{definition}
Let $\Omega$ be a well ordered set and let $\alpha \in \Omega$. We define $\gamma(\alpha) = \sup\{\beta \in \Omega \, | \, \beta < \alpha\}$.
 \end{definition}

\begin{lemma} \label{Lemma 6.11}
     Let $(T,M)$, $C_i$, and $q_i$ for $i \in \mathbb{N}$ be as in Definition \ref{Definition 6.1}. Moreover, suppose that for every $i \in \mathbb{N}$ and for every $P \in \Ass (T/q_iT)$, we have that $P \subseteq Q$ for some $Q \in C_i$. Let $(R, R \cap M)$ be a $Ct\mathfrak{q}$-subring of $T$ such that $q_iT \cap R = q_iR$ for all $i \in \mathbb{N}$. Let $J$ be an ideal of $T$ with $J \nsubseteq Q$ for all $Q \in \bigcup_{i \in \mathbb{N}}C_i$, and let $u + J \in T/J$. Then there exists a $Ct\mathfrak{q}$-subring $S$ of $T$ such that
    \begin{enumerate}[(i)]
         \item $R \subseteq S \subseteq T$,
        \item $|R|=|S|$,
        \item $u+J$ is in the image of the map $S \longrightarrow T/J$,
        \item If $u \in J$, then $S \cap J \nsubseteq Q$ for all $Q \in \bigcup_{i \in \mathbb{N}}C_i$, and
        \item For every finitely generated ideal $I$ of $S$, we have $IT \cap S = I$.
    \end{enumerate}
\end{lemma}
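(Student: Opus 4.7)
The plan is to build $S$ as the union of a countable ascending chain of $Ct\mathfrak{q}$-subrings $R_0 \subseteq R_1 \subseteq R_2 \subseteq \cdots$, where $R_0$ is chosen so that conditions (iii) and (iv) are arranged once and for all, and each successive stage closes up all pertinent finitely generated ideals of the previous stage against intersections with $T$. Each $R_n$ will be a $Ct\mathfrak{q}$-subring of $T$ with $|R_n|=|R|$ and $q_iT\cap R_n = q_i R_n$ for every $i\in\mathbb{N}$.

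The first step is to apply \Cref{making it onto} to $R$, $u$, and $J$ (whose hypotheses match ours) to produce $R_0$ satisfying (i)--(iv), $|R_0|=|R|$, and $q_iT\cap R_0 = q_iR_0$ for all $i\in\mathbb{N}$. For the inductive step, given $R_n$, let $\Omega_n$ enumerate the pairs $(I,c)$ where $I$ is a finitely generated ideal of $R_n$ and $c\in IT\cap R_n$. Since $R_n$ is infinite, $|\Omega_n|=|R_n|=|R|$. Well-order $\Omega_n$ and recursively build a chain $R_n=R_{n,0}\subseteq R_{n,1}\subseteq\cdots$ indexed by $\Omega_n$: at a successor stage corresponding to the pair $(I,c)$, apply \Cref{closing up ideals} to the ring $R_{n,\alpha}$, the ideal $IR_{n,\alpha}$, and the element $c\in IR_{n,\alpha}T\cap R_{n,\alpha}$, obtaining $R_{n,\alpha+1}$ with $c\in IR_{n,\alpha+1}$, $|R_{n,\alpha+1}|=|R|$, and $q_iT\cap R_{n,\alpha+1}=q_iR_{n,\alpha+1}$; at a limit stage take the union and invoke \Cref{unioning lemma}. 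Define $R_{n+1}=\bigcup_{\alpha\in\Omega_n}R_{n,\alpha}$. Then $|R_{n+1}|\leq|\Omega_n|\cdot|R|=|R|<|T|$, and the property $q_iT\cap R_{n+1}=q_iR_{n+1}$ is preserved because any $x\in q_iT\cap R_{n+1}$ lies in some $R_{n,\alpha}$, whence $x\in q_iR_{n,\alpha}\subseteq q_iR_{n+1}$.

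Set $S=\bigcup_{n\geq 0} R_n$. By \Cref{unioning lemma}, $S$ is quasi-local with maximal ideal $S\cap M$ and satisfies conditions (ii) and (iii) of \Cref{Definition 6.1}; also $|S|=|R|<|T|$ and each $q_i$ lies in $S$, so $S$ is a $Ct\mathfrak{q}$-subring of $T$. Properties (i) and (ii) of the lemma are immediate, while (iii) and (iv) follow since $R_0\subseteq S$, so $u+J$ is still in the image of $S\to T/J$ and $R_0\cap J\subseteq S\cap J$. For (v), take any finitely generated ideal $I=(x_1,\dots,x_k)S$ and any $c\in IT\cap S$. Since only finitely many elements are involved, there is some $n$ with $c,x_1,\dots,x_k\in R_n$. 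Setting $I_n=(x_1,\dots,x_k)R_n$, the pair $(I_n,c)$ is handled at some successor step in the construction of $R_{n+1}$, giving $c\in I_n R_{n+1}\subseteq IS=I$; the reverse inclusion $I\subseteq IT\cap S$ is trivial.

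The main obstacle is the cardinality bookkeeping in the transfinite closure passing from $R_n$ to $R_{n+1}$: we need $|\Omega_n|=|R_n|$ (using that a finitely generated ideal of an infinite ring $R_n$ is determined by a finite tuple from $R_n$, of which there are $|R_n|$) and we must verify that the cardinality does not blow up through the well-ordered chain, which is exactly what \Cref{unioning lemma} ensures when each stage has size at most $|R|$ and the index set has size at most $|R|$. Once this is in place, the countable union over $n$ keeps the cardinality at $|R|$, and the key observation for (v) is simply that any finite ideal-data already appears at some finite stage of the construction.
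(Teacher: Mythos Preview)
Your proposal is correct and follows essentially the same route as the paper's proof: first apply \Cref{making it onto} to produce $R_0$ securing conditions (iii) and (iv), then for each $n$ run a transfinite construction over the set of pairs $(I,c)$ with $I$ finitely generated in $R_n$ and $c\in IT\cap R_n$, applying \Cref{closing up ideals} at successor steps and \Cref{unioning lemma} at limit steps, and finally take $S=\bigcup_n R_n$. The only small point the paper makes explicit and you leave implicit is that $\Omega_n$ should be well ordered so as to have no maximal element (ensuring every pair $(I,c)$ actually gets processed at some successor step), and that the condition $q_iT\cap R_{n,\alpha}=q_iR_{n,\alpha}$ must be checked at limit stages within the transfinite recursion as well, since \Cref{closing up ideals} requires it as a hypothesis; but your argument for $R_{n+1}$ applies verbatim there.
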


\begin{proof} First use Lemma \ref{making it onto} to find a $Ct\mathfrak{q}$-subring $R_0$ of $T$ such that $R \subseteq R_0 \subseteq T$, $|R_0| = |R|$, $q_iT \cap R_0 = q_iR_0$ for all $i \in \mathbb{N}$, $u + J$ is in the image of the map $R_0 \longrightarrow T/J$, and if $u \in J$, then $R_0 \cap J \not\subseteq Q$ for all $Q \in \bigcup_{i \in \mathbb{N}}C_i$. Our final ring $S$ will contain $R_0$ and so $S$ will satisfy conditions $(iii)$ and $(iv)$ automatically.
%    This proof follow from Lemma 2.11 in \cite{Chatlos} with a few minor modifications. First, we replace $p$ca-subrings with $Ct\mathfrak{q}$-subrings. Next, since $Ct\mathfrak{q}$-subrings are infinite, we replace $\Gamma(\cdot)$ with $|\cdot|$. Finally, we use our corresponding lemmas instead of Lemmas 2.4, 2.5, 2.7, 2.8, and 2.9 from \cite{Chatlos}.

Let $$\Omega = \{(I,c) \, | \, I \mbox{ is a finitely generated ideal of } R_0 \mbox{ and } c \in IT \cap R_0 \},$$
and note that $|\Omega| = |R_0| = |R|$.  Well order $\Omega$ such that it does not have a maximal element, and let $0$ denote its initial element. We now inductively define an increasing chain of $Ct\mathfrak{q}$-subrings of $T$, one for every element of $\Omega$.  The subrings will satisfy the condition that $|R_{\alpha}| = |R|$ for every $\alpha \in \Omega$ and $q_iT \cap R_{\alpha} = q_iR_{\alpha}$ for every $i \in \mathbb{N}$. Note that $R_0$ has already been defined. Now let $\alpha \in \Omega$ and assume that $R_{\beta}$ has been defined for all $\beta < \alpha$ such that $R_{\beta}$ is a $Ct\mathfrak{q}$-subring of $T$ with $|R_{\beta}| = |R|$, if $\mu, \rho \in \Omega$ with $\mu < \rho < \beta$, then $R_{\mu} \subseteq R_{\rho} \subseteq R_{\beta}$, and $q_iT \cap R_{\beta} = q_iR_{\beta}$ for all $i \in \mathbb{N}$. If $\gamma(\alpha) < \alpha$ then $\gamma(\alpha) = (I,c)$ for some $(I,c) \in \Omega$. Define $R_{\alpha}$ to be the $Ct\mathfrak{q}$-subring of $T$ obtained from Lemma \ref{closing up ideals} such that $R_{\gamma(\alpha)} \subseteq R_{\alpha} \subseteq T$, $|R_{\alpha}| = |R_{\gamma(\alpha)}| = |R|$, $c \in IR_{\alpha}$, and $q_iT \cap R_{\alpha} = q_iR_{\alpha}$ for all $i \in \mathbb{N}$. If $\gamma(\alpha) = \alpha$ then define $R_{\alpha} = \bigcup_{\beta < \alpha}R_{\beta}$. By Lemma \ref{unioning lemma}, $R_{\alpha}$ is a $Ct\mathfrak{q}$-subring of $T$ with $|R_{\alpha}| = |R|$. If $x \in q_i T \cap R_{\alpha}$ for some $i \in \mathbb{N}$, then $x \in R_{\beta}$ for some $\beta < \alpha$ and so $x \in q_iT \cap R_{\beta} = q_iR_{\beta} \subseteq q_iR_{\alpha}$. Therefore, $q_iT \cap R_{\alpha} = q_iR_{\alpha}$ for every $i \in \mathbb{N}$. 

Now let $R_1 = \bigcup_{\alpha \in \Omega} R_{\alpha}$. By Lemma \ref{unioning lemma}, $R_1$ is a $Ct\mathfrak{q}$-subring of $T$ with $|R_1| = |R|$, and by the argument at the end of the previous paragraph, $q_iT \cap R_1 = q_iR_1$ for every $i \in \mathbb{N}$. Let $I$ be a finitely generated ideal of $R_0$ and let $c \in IT \cap R_0$. Then $(I,c) = \gamma(\alpha)$ for some $\alpha \in \Omega$ satisfying $\gamma(\alpha) < \alpha$. By construction, $c \in IR_{\alpha} \subseteq IR_1$.  It follows that $IT \cap R_0 \subseteq IR_1$ for every finitely generated ideal $I$ of $R_0$.

Repeat this construction with $R_0$ replaced by $R_1$ to obtain a $Ct\mathfrak{q}$-subring $R_2$ of $T$ with $|R_2| = |R|$, $q_iT \cap R_2 = q_iR_2$ for every $i \in \mathbb{N}$, and $IT \cap R_1 \subseteq IR_2$ for all finitely generated ideals $I$ of $R_1$. Continue to obtain $R_j$ for every $j \in \mathbb{N}$.  Then, for every $j \in \mathbb{N}$, we have $R_j$ is a $Ct\mathfrak{q}$-subring of $T$ with $|R_j| = |R|$, $q_iT \cap R_j = q_iR_j$ for every $i \in \mathbb{N}$, and $IT \cap R_j \subseteq IR_{j + 1}$ for every finitely generated ideal $I$ of $R_j$.

We claim that $S = \bigcup_{j \in \mathbb{N}}R_j$ is the desired $Ct\mathfrak{q}$-subring of $T$. Note that $R \subseteq S \subseteq T$. By Lemma \ref{unioning lemma}, $S$ is a $Ct\mathfrak{q}$-subring of $T$ with $|S| = |R|$. Let $I$ be a finitely generated ideal of $S$ and let $x \in IT \cap S$.  Let $I = (s_1, \ldots ,s_k)$ where $s_j \in S$. Choose $K \in \mathbb{N}$ so that $x,s_1, \ldots ,s_k \in R_K$. Then $x \in (s_1, \ldots,s_k)T \cap R_K \subseteq (s_1, \ldots,s_k)R_{K + 1} \subseteq IS$ and it follows that $IT \cap S = I$ for every finitely generated ideal $I$ of $S$.
\end{proof}

We are now ready to construct a local domain $A$ containing elements $q_i$ for $i \in \mathbb{N}$ that satisfies the needed conditions to be a precompletion of $T$, namely, the map $A \rightarrow T/M^2$ is onto and $IT\cap A=IA$ for every finitely generated ideal $I$ of $A$. Furthermore, $A$ satisfies the condition that for each $i \in \mathbb{N}$, $q_iA$ is a prime ideal of $A$ and the maximal elements in the formal fiber of $A$ at $q_iA$ are exactly the elements of $C_i$. The proof of Lemma \ref{Lemma 6.12} is largely based on the proof of Lemma 2.12 from \cite{Chatlos}.

\begin{lemma}\label{Lemma 6.12}
Let $(T,M)$, $C_i$, $q_i$, and $\mathfrak{q}$ for $i \in \mathbb{N}$ be as in Definition \ref{Definition 6.1}. Moreover, suppose that for every $i \in \mathbb{N}$ and for every $P \in \Ass (T/q_iT)$, we have that $P \subseteq Q$ for some $Q \in C_i$. Let $\Pi$ denote the prime subring of $T$. Suppose $P \cap \Pi[\mathfrak{q}]=(0)$ for every $P \in \Ass (T)$ and for all $i \in \mathbb{N}$, if $Q \in C_i$ then $F_{\Pi[\mathfrak{q}]}\cap Q \subseteq q_{i}T$. Then there exists a local domain $A \subseteq T$ containing $q_i$ for every $i \in \mathbb{N}$ such that the following conditions hold.
%
%for all $i \in \Phi$ and $P \in S=\{P \in \Spec(A) \given P \neq (0),\, P \neq q_iA \}$ the following is true: 
\begin{enumerate}[(i)]
    \item $\widehat{A} \cong T$,
    \item For every $i \in \mathbb{N}$, $q_iA$ is a prime ideal of $A$ and the maximal elements of the formal fiber of $A$ at $q_iA$ are exactly the elements of $C_i$, 
%    \item $IT \cap A=IA$ for every finitely generated ideal $I$, 
    \item If $J$ is an ideal of $T$ satisfying that $J \nsubseteq Q$ for all $Q \in \bigcup_{i \in \mathbb{N}}C_i$, then the map $A \rightarrow T/J$ is onto and $J \cap A \nsubseteq Q$ for all $Q \in \bigcup_{i \in \mathbb{N}}C_i$,
%    \item $A/P$ is complete and,
    \item If $P'$ is a nonzero prime ideal of $A$ with $P' \neq q_iA$ for all $i \in \mathbb{N}$, then $T \otimes_A k(P') \cong k(P')$, where $k(P')=A_P'/P'A_P'$.
\end{enumerate}
\end{lemma}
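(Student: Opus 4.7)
The plan is to construct $A$ as the directed union of a transfinite chain of $Ct\mathfrak{q}$-subrings of $T$ built so that every pair $(J,u+J)$ for $J$ an ideal of $T$ not contained in any $Q\in\bigcup_i C_i$ gets handled, and then to invoke Proposition \ref{completion proving machine} for $\widehat{A}\cong T$ and deduce the remaining conditions from the closure properties guaranteed at each stage.

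First, I would produce an initial $Ct\mathfrak{q}$-subring $R_0$ satisfying $q_iT\cap R_0=q_iR_0$ for all $i$. The ring $R'=\Pi[\mathfrak{q}]_{(\Pi[\mathfrak{q}]\cap M)}$ is a quasi-local subring of $T$ of cardinality at most $\aleph_0<|T|$ (the latter by Proposition \ref{Yu 2.10}, using any $Q\in C_1$ to witness $\dim T\geq 1$), and it satisfies conditions (ii)--(iii) of Definition \ref{Definition 6.1} by the standing hypotheses $P\cap\Pi[\mathfrak{q}]=(0)$ and $F_{\Pi[\mathfrak{q}]}\cap Q\subseteq q_iT$. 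Applying Lemma \ref{lemma 6.5} to $R'$ (and, if necessary, one application of Lemma \ref{adjoining transcendentals} to secure infiniteness) produces $R_0$ with the desired properties. Next, well-order the set $\Omega$ of all pairs $(J,u+J)$ with $J$ an ideal of $T$ not contained in any $Q\in\bigcup_i C_i$ and $u\in T$, so that it has no maximum; since $T$ is Noetherian, every ideal is finitely generated and $|\Omega|\leq|T|$. Recursively build a chain $\{R_\alpha\}_{\alpha\in\Omega}$: at a successor stage apply Lemma \ref{Lemma 6.11} to $R_{\gamma(\alpha)}$ with the job assigned to $\gamma(\alpha)$, producing $R_\alpha\supseteq R_{\gamma(\alpha)}$ of the same cardinality that satisfies all conclusions of that lemma while preserving $q_iT\cap R_\alpha=q_iR_\alpha$; at a limit stage, set $R_\alpha=\bigcup_{\beta<\alpha}R_\beta$ and invoke Lemma \ref{unioning lemma}, the bound $|R_\alpha|\leq\max(|R_0|,|\alpha|)<|T|$ keeping us inside $Ct\mathfrak{q}$-subrings and an elementwise argument transferring $q_iT\cap R=q_iR$ to the limit. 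Set $A=\bigcup_{\alpha\in\Omega} R_\alpha$.

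Condition (iii) is built into the enumeration. For (i), the map $A\to T/M^2$ is onto because $(M^2,u+M^2)\in\Omega$ for every $u\in T$, and for each finitely generated ideal $I=(a_1,\dots,a_n)A$ one lifts a witness $x\in IT\cap A$ to a single $R_\alpha$ produced by Lemma \ref{Lemma 6.11} to conclude $x\in(a_1,\dots,a_n)R_\alpha\subseteq I$; Proposition \ref{completion proving machine} then gives $\widehat{A}\cong T$. For (ii), Lemma \ref{Q equal q_i} applied at the union (its hypotheses hold by a direct elementwise argument) yields $Q\cap A=q_iA$ for every $Q\in C_i$, so each $q_iA$ is prime and every $Q\in C_i$ is a maximal element of the formal fiber over $q_iA$ by the incomparability of $C_i$. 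To see these exhaust the maximal elements, let $Q'$ be a prime of $T$ with $Q'\cap A=q_iA$: the case $Q'\nsubseteq Q$ for all $Q\in\bigcup_j C_j$ is excluded by (iii) applied with $J=Q'$ and $u=0$ (which would force $q_iA=Q'\cap A\nsubseteq Q$ for any $Q\in C_i$); the case $Q'\subseteq Q\in C_j$ with $C_j\neq C_i$ is excluded because $q_i\in Q'\subseteq Q$ contradicts the defining property of $q_i$. For (iv), if $P'$ is a nonzero prime of $A$ with $P'\neq q_iA$ for all $i$, then $P'T\nsubseteq Q$ for any $Q\in\bigcup_j C_j$ (otherwise $P'\subseteq Q\cap A=q_jA$ and the height-one prime $q_jA$ forces $P'=q_jA$), so (iii) applied with $J=P'T$ gives $T=A+P'T$, whence $T\otimes_A k(P')=k(P')$.

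The hardest part is the bookkeeping in the transfinite induction, specifically propagating the auxiliary property $q_iT\cap R_\alpha=q_iR_\alpha$ through limit stages while keeping all intermediate cardinalities strictly below $|T|$, and ensuring that an $\Omega$ of size at most $|T|$ suffices to encode every ideal-plus-coset pair (which relies on $T$ being Noetherian so that every ideal is determined by finitely many generators). The most delicate conceptual step is the deduction of (iv) from (iii), which hinges on the observation that a nonzero prime of $A$ properly contained in the height-one prime $q_jA$ cannot exist.
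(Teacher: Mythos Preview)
Your proposal is correct and follows essentially the same approach as the paper: start from $\Pi[\mathfrak{q}]$, pass to an initial $Ct\mathfrak{q}$-subring via Lemmas~\ref{lemma 6.5} and~\ref{adjoining transcendentals}, run a transfinite recursion over the same index set $\Omega$ using Lemma~\ref{Lemma 6.11} at successor stages and unions (Lemma~\ref{unioning lemma}) at limit stages, and verify (i)--(iv) by the arguments you outline. One minor tightening: the paper well-orders $\Omega$ so that each element has fewer than $|\Omega|$ predecessors (not merely so that there is no maximum), which is what actually guarantees $|R_\alpha|<|T|$ at every stage in the case $|\Omega|=|T|$.
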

\begin{proof}
By assumption and since $T$ is uncountable, $R''_0 = \Pi[\mathfrak{q}]_{(\Pi[\mathfrak{q}] \cap M)}$ satisfies all conditions to be a $Ct\mathfrak{q}$-subring of $T$ except that it might be finite. By Lemma \ref{lemma 6.5}, $R'_0 = F_{R''_0} \cap T$ satisfies all conditions to be a $Ct\mathfrak{q}$-subring of $T$ except that it might be finite. Moreover, $R''_0 \subseteq R'_0 \subseteq T$ and  $q_iT \cap R'_0  = q_iR'_0$ for all $i \in \mathbb{N}$. Note that $R'_0$ is countable since $R''_0$ is countable. Let $Q \in \bigcup_{i \in \mathbb{N}}C_i$ and let $D_{(Q)}$ be a full set of coset representatives for the cosets $t + Q \in T/Q$ that are algebraic over $R'_0/(R'_0 \cap Q)$. Let $D = \bigcup_{Q \in \bigcup_{i \in \mathbb{N}}C_i}D_{(Q)},$ and note that $|D| < |T|$. Use Proposition \ref{stronger coset avoidance} with $I = T$ and $C = \bigcup_{i \in \mathbb{N}}C_i$ to find $x \in T$ with $x \not\in \bigcup\set{r+P\given r\in D,P\in C}$. Then, for all $Q \in \bigcup_{i \in \mathbb{N}}C_i$, $x + Q \in T/Q$ is transcendental over $R'_0/(R'_0 \cap Q)$.  By Lemma \ref{adjoining transcendentals}, $\Tilde{R}'_0 = R'_0[x]_{(R'_0 \cap M)}$ is a $Ct\mathfrak{q}$-subring of $T$. Note that $\Tilde{R}'_0$ is countable. Now use Lemma \ref{lemma 6.5} to find a $Ct\mathfrak{q}$-subring $S$ of $T$ such that $\Tilde{R}'_0 \subseteq S \subseteq T$, $|S| = |\Tilde{R}'_0|$, and $q_iT \cap S = q_iS$ for all $i \in \mathbb{N}$. Finally, use Lemma \ref{Lemma 6.11} with $J = T$ and $u = 0$ to find a $Ct\mathfrak{q}$-subring $R_0$ of $T$ such that $S \subseteq R_0 \subseteq T$, $R_0$ is countable and, for every finitely generated ideal $I$ of $R_0$, we have $IT \cap R_0 = I$.

Now let $$\Omega = \{u + J \, | \, J \mbox{ is an ideal of } T \mbox{ with } J \not\subseteq Q \mbox{ for all } Q \in \bigcup_{i \in \mathbb{N}}C_i \}$$
Since $T$ is infinite and Noetherian $|\Omega| \leq |T|$. Well order $\Omega$ so that each element has fewer than $|\Omega|$ predecessors, and let $0$ denote the initial element of $\Omega$. We recursively define a chain of $Ct\mathfrak{q}$-subrings of $T$, one for each element of $\Omega$. We have already defined $R_0$. Let $\alpha \in \Omega$ and assume that $R_{\beta}$ has been defined for all $\beta < \alpha$ such that $R_{\beta}$ is a $Ct\mathfrak{q}$-subring of $T$, $|R_{\beta}| \leq \max\{\aleph_0, |\{\mu \in \Omega \, | \, \mu \leq \beta\}|\}$, if $\rho < \mu \leq \beta$, then $R_{\rho} \subseteq R_{\mu}$, and $IT \cap R_{\beta} = I$ for every finitely generated ideal $I$ of $R_{\beta}$. First suppose $\gamma(\alpha) < \alpha$. Then $\gamma(\alpha) = u + J$ for some $u + J \in \Omega$. Define $R_{\alpha}$ to be the $Ct\mathfrak{q}$-subring of $T$ obtained from Lemma \ref{Lemma 6.11} so that $R_{\gamma(\alpha)} \subseteq R_{\alpha}$, $|R_{\alpha}| = |R_{\gamma(\alpha)}|$, $u + J$ is in the image of the map $R_{\alpha} \longrightarrow T/J$, if $u \in J$, then $R_{\alpha} \cap J \not\subseteq Q$ for all $Q \in \bigcup_{i \in \mathbb{N}}C_i$, and for every finitely generated ideal $I$ of $R_{\alpha}$ we have $IT \cap R_{\alpha} = I$. Now suppose that $\gamma(\alpha) = \alpha$. Then define $R_{\alpha} = \bigcup_{\beta < \alpha}R_{\beta}.$ Suppose $I = (a_1, \ldots ,a_k)$ is a finitely generated ideal of $R_{\alpha}$. Then if $x \in IT \cap R_{\alpha}$, we have for some $\beta < \alpha$ that $x \in (a_1, \ldots ,a_k)T \cap R_{\beta} = (a_1, \ldots ,a_k)R_{\beta} \subseteq I$.  It follows that $IT \cap R_{\alpha} = I$ for every finitely generated ideal $I$ of $R_{\alpha}$. Note that $|R_{\alpha}| \leq \max\{\aleph_0, |\{\mu \in \Omega \, | \, \mu \leq \alpha\}|\} < |T|$ and so by Lemma \ref{unioning lemma}, $R_{\alpha}$ is a $Ct\mathfrak{q}$-subring of $T$.

We claim that $A = \bigcup_{\alpha \in \Omega} R_{\alpha}$ is the desired domain. By Lemma \ref{unioning lemma}, $A$ is quasi-local and satisfies conditions $(ii)$ and $(iii)$ of Definition \ref{Definition 6.1}. Since $A \cap P = (0)$ for all associated prime ideals $P$ of $T$, $A$ is a domain. Since $IT \cap R_{\alpha} = I$ for every finitely generated ideal $I$ of $R_{\alpha}$ we can show, using the same argument as in the previous paragraph, that $IT \cap A = I$ for every finitely generated ideal $I$ of $A$. By construction, condition $(iii)$ of the lemma is satisfied. In particular, if $Q \in \bigcup_{i\in \mathbb{N}}C_i$, then $Q \neq M$ and so $M^2 \not\subseteq Q$. It follows that the map $A \longrightarrow T/M^2$ is onto.  By Proposition \ref{completion proving machine}, $\widehat{A} \cong T$.
    
    Now we show conditions $(ii)$ and $(iv)$ of the lemma are satisfied. If $Q \in C_i$, then by Lemma \ref{Q equal q_i}, $Q \cap A = q_iA$, and so $Q$ is in the formal fiber of $A$ at $q_iA$. If $J$ is a prime ideal of $T$ such that $J \not\subseteq Q$ for all $Q \in \bigcup_{i \in \mathbb{N}}C_i$, then, by construction, $A \cap J \not\subseteq Q$ for all $Q \in \bigcup_{i \in \mathbb{N}}C_i$. It follows that $J$ is not in the formal fiber of $q_iA$ for all $i \in \mathbb{N}$. Therefore, for every $i \in \mathbb{N}$, the maximal elements of the formal fiber of $A$ at $q_iA$ are exactly the elements of $C_i$. Now let $P'$ be a nonzero prime ideal of $A$ such that $P' \neq q_iA$ for all $i \in \mathbb{N}$. Let $J=P'T$. First, suppose that $J \subseteq Q$ for some $Q \in \bigcup_{i\in\mathbb{N}}C_i$. Then, $P' \subseteq J \cap A \subseteq Q\cap A = q_iA$. This implies that ht$(P') \leq \mbox{ht} (q_iA)$. But, $A$ is a domain and $q_iA$ is prime, so ht$(q_iA)=1$. Therefore, either $P'=q_iA$ or $P'=(0)$, which is a contradiction. It follows that $J \not\subseteq Q$ for all $Q \in \bigcup_{i \in \mathbb{N}}C_i$. By construction, we have that $A \rightarrow T/J$ is onto. Now, since $J \cap A = P'T \cap A = P'$, the map $A/P' \rightarrow T/J$ is an isomorphism, and so, $A/P'$ is complete. Then it follows that $T \otimes_A k(P') \cong (T/P'T)_{\overline{A-P'}} \cong (A/P')_{\overline{A-P'}} \cong A_{P'}/P'A_{P'} \cong k(P')$.
\end{proof}

\begin{remark}\label{remark}
Assume the setting of Lemma \ref{Lemma 6.12} and consider the map $f$ from the set $$X = \{J \in \Spec(T) \, | \, J \not\subseteq Q \mbox{ for all }Q \in \bigcup_{i \in \mathbb{N}}C_i\}$$ to the set $$Y = \{P \in \Spec(A) \, | \, P \neq (0) \mbox{ and } P \neq q_iA \mbox{ for every } i \in \mathbb{N} \}$$ given by $J \longrightarrow J \cap A$. By Lemma \ref{Lemma 6.12}, and since $T$ is a faithfully flat extension of $A$, we know that $f$ does map elements of $X$ to elements of $Y$ and $f$ is onto.
%
%    Let $C^* = \{Q' \given Q' \subseteq Q \ \text{for} \ Q \in C\}$. A particularly useful consequence of Lemma \ref{Lemma 6.12} is that $A$ is uncountable since the map $\gamma: \Spec(T) \setminus C^* \to \Spec(A) \setminus (0)\cup\{q_iA\}_{i \in \Phi}$, given by $\gamma (J') = J' \cap A$, is bijective. The argument follows closely to that of Remark 2.4 in \cite{Small2017}.
 %   
    Now let $J \in X$ and let $P = J \cap A \in Y$. Then the map $A \longrightarrow T/J$ is onto and since $J \cap A = P$ we have that $A/P \cong T/J$. In particular, $A/P$ is complete and so $A/P \cong T/PT$.  
 %   
 %   We show that $J' = PT$. It suffices to prove that $J'/PT = PT/PT$. By (v) from Lemma \ref{Lemma 6.12}, $A/P$ is complete and therefore $A/P \cong T/PT$. 
 %   
    Letting $A/P$ denote its image in $T/PT$, we have 
    $$(J/PT) \cap (A/P) = (J \cap A)/P = P/P = (0).$$
    But, since $A/P \cong T/PT$, there can only be one ideal $I$ of $T/PT$ such that $I \cap (A/P) = (0)$. Thus, $J/PT = PT/PT$. 
 It follows that $J = PT$ and as a consequence $f$ is injective. Therefore, $f$ is bijective and so there is a one to one correspondence between elements of $X$ and elements of $Y$.
\end{remark}

We are now ready to prove the main theorem of this section. 

%This theorem allows us to glue countably many sets of prime ideals of $T$, each to a different principally generated ideal in a local domain precompletion $A$ of $T$.

%%%BIG THEOREM
\begin{theorem} \label{big theorem}
Let $T$ be a complete local ring and let $\Pi$ denote the prime subring of $T$. For each $i \in \mathbb{N}$, let $C_i$ be a nonempty countable set of nonmaximal pairwise incomparable prime ideals of $T$ and suppose that, if $i \neq j$, then either $C_i = C_j$ or no element of $C_i$ is contained in an element of $C_j$. Then there exists a local domain $A \subseteq T$ with $\widehat{A} \cong T$ and, for all $i \in \mathbb{N}$, there is a nonzero prime element $p_i$ of $A$ such that $C_i$ is exactly the set of maximal elements of the formal fiber of $A$ at $p_iA$ if and only if there exists a set of nonzero elements $\mathfrak{q} = \{q_i\}_{i = 1}^{\infty}$ of $T$ satisfying the following conditions
\begin{enumerate}[(i)]
        \item For $i \in \mathbb{N}$ we have $q_i \in \bigcap_{Q \in C_i}Q$ and, if $C_j \neq C_i$ and $Q' \in C_j$, then $q_i \not\in Q'$,
        \item $P \cap \Pi[\mathfrak{q}]=(0)$ for all $P \in \Ass(T)$,
        \item If $i \in \mathbb{N}$ and $P' \in \Ass(T/q_iT)$, then $P' \subseteq Q$ for some $Q \in C_i$, and
        \item If $i \in \mathbb{N}$ and $Q \in C_i$, then $F_{\Pi[\mathfrak{q}]}\cap Q \subseteq q_iT$ where $F_{\Pi[\mathfrak{q}]}$ is the quotient field of $\Pi[\mathfrak{q}]$.
    \end{enumerate}
\end{theorem}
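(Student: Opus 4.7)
The plan is to prove the two implications separately. For sufficiency, I would observe that the four conditions on $\mathfrak{q}$ stated in the theorem are exactly the hypotheses of Lemma~\ref{Lemma 6.12}: in particular, condition~(ii) forces each $q_i$ to avoid every associated prime of $T$, hence to be a regular element as required by Definition~\ref{Definition 6.1}. So Lemma~\ref{Lemma 6.12} applies directly and yields a local domain $A \subseteq T$ containing each $q_i$, with $\widehat{A}\cong T$ and with each $q_iA$ a prime ideal of $A$ whose formal-fiber maximal elements are exactly $C_i$. Taking $p_i := q_i$ (nonzero in $A$, since $A \hookrightarrow T$ and $q_i \neq 0$) completes this direction with essentially no extra work.

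For necessity, suppose such an $A$ with prime elements $p_i$ exists, and set $q_i := p_i$. To verify condition~(i), each $Q \in C_i$ satisfies $Q \cap A = p_iA$, so $p_i \in Q$; if $C_j \neq C_i$ and $Q' \in C_j$ contained $p_i$, then $p_i \in Q' \cap A = p_jA$, so $p_j \mid p_i$ in $A$, and since both elements are prime, hence irreducible, we would get $p_iA = p_jA$, making the two formal fibers coincide and forcing $C_i = C_j$, a contradiction. Condition~(ii) is a direct consequence of faithful flatness of $A \to T$: any nonzero $a \in A$ is a nonzerodivisor in $A$, hence in $T$ by flatness, hence lies outside every associated prime of $T$; and $\Pi[\mathfrak{q}] \subseteq A$. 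For condition~(iv), given $a/b \in F_{\Pi[\mathfrak{q}]} \cap Q$ with $a,b \in \Pi[\mathfrak{q}] \subseteq A$ and $Q \in C_i$, the equation $a = b(a/b)$ in $T$ gives $a \in bT \cap A = bA$ by faithful flatness, so $a/b \in A$; then $a/b \in A \cap Q = p_iA \subseteq q_iT$.

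The one step requiring genuine care is condition~(iii), which I would argue as follows. Let $P' \in \text{Ass}_T(T/p_iT)$, and write $P' = \{r \in T \mid rt \in p_iT\}$ for some $t \notin p_iT$. Clearly $P' \cap A \supseteq p_iA$. Conversely, if $a \in P' \cap A$ were outside $p_iA$, then since $A/p_iA$ is a domain, multiplication by $a$ would be injective on $A/p_iA$; by flatness of $A \to T$, multiplication by $a$ would remain injective on $T/p_iT$. But $at \in p_iT$ with $t \notin p_iT$ contradicts this, so $P' \cap A = p_iA$, placing $P'$ in the formal fiber of $A$ at $p_iA$, and hence $P' \subseteq Q$ for some maximal element $Q \in C_i$. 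I expect this flatness-and-associated-primes step in~(iii) to be the main subtlety; all other parts of the necessity reduce to routine faithful-flatness bookkeeping, and the sufficiency is entirely encapsulated in Lemma~\ref{Lemma 6.12}.
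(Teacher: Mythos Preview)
Your proposal is correct and follows essentially the same approach as the paper: sufficiency is reduced directly to Lemma~\ref{Lemma 6.12} after noting that condition~(ii) forces each $q_i$ to be regular, and necessity is handled by setting $q_i=p_i$ and checking the four conditions via faithful flatness of $A\to T$. Your treatment of condition~(iii) using flatness of $A/p_iA\to T/p_iT$ to preserve injectivity of multiplication by $a$ is just a rephrasing of the paper's observation that $T/q_iT$ is the completion of $A/q_iA$ and hence reflects zerodivisors.
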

\begin{proof}
    Suppose there exists a set of nonzero elements $\mathfrak{q} = \{q_i\}_{i = 1}^{\infty}$ of $T$ such that conditions $(i)-(iv)$ hold. Since $P \cap \Pi[\mathfrak{q}] = (0)$ for all $P \in \Ass(T)$, $q_i$ is a regular element of $T$ for all $i \in \mathbb{N}$.  
   % Moreover, we have that for every $i \in \Phi$, if $P' \in \Ass(T/q_iT)$, then $P' \subseteq Q$ for some $Q \in C_i$. 
    By Lemma \ref{Lemma 6.12} the desired $A$ exists with $p_i = q_i$ for every $i \in \mathbb{N}$.
     
    Conversely, suppose $A \subseteq T$ is a local domain with $\widehat{A} \cong T$ and suppose that, for every $i \in \mathbb{N}$, there is a nonzero prime element $p_i$ of $A$ such that $C_i$ is exactly the set of maximal elements of the formal fiber of $A$ at $p_iA$. Let $q_i = p_i$ for every $i \in \mathbb{N}$.  We claim that $\mathfrak{q} = \{q_i\}_{i = 1}^{\infty}$ is the desired set of elements of $T$.
    
    Fix $i \in \mathbb{N}$ and suppose $Q \in C_i$. Then $Q$ is in the formal fiber of $A$ at $q_iA$ and so $Q \cap A = q_iA$. It follows that $q_i \in Q$. Thus, $q_i \in \bigcap_{Q \in C_i}Q$. Now suppose $C_j \neq C_i$ and $Q' \in C_j$. By hypothesis, no element of $C_j$ is contained in an element of $C_i$. Note that $Q' \cap A = q_jA$. If $q_i \in Q'$ then $q_iA \subseteq Q' \cap A = q_jA$ and so $q_iA = q_jA$. Thus $Q'$ is in the formal fiber of $A$ at $q_iA$. Therefore, $Q' \subseteq Q$ for some $Q \in C_i$, a contradiction.  It follows that condition $(i)$ of the theorem holds.
    
  %  We first show that condition $(i)$ is satisfied. Since $A \subseteq T$, $q_i \in T$ for all $i \in \Phi$. For a fixed $i \in \Phi$, note that $q_i \in \bigcap_{Q \in C_i} Q$. Now, suppose, by way of contradiction, $q_i \in Q'$  for some $Q' \in C \setminus C_i$. Since $Q' \in C$, there exists $i \neq j \in \Phi$ such that $Q' \in C_j$ and $Q'$ is a maximal element in the formal fiber of $A$ at $q_j$. So, $q_iT \cap A \subseteq Q' \cap A$ implies $q_iA \subseteq q_jA$. In fact, since $q_iA$ and $q_jA$ are both height one prime ideals, $q_iA = q_jA$. Then $Q'$ must be in the formal fiber of $A$ at $q_iA$, and so, $Q'$ must be contained in $Q$ for some $Q\in C_i$, which is a contradiction to the fact $Q$ and $Q'$ are incomparable. Therefore, if $q_i \in \bigcap_{Q \in C_i} Q$, then $q_i \notin \bigcup_{Q' \in (C \setminus C_i)}Q'$ for every $i \in \Phi$.
    
    To prove condition $(ii)$ is satisfied, observe that, since the extension $A \subseteq \widehat{A} = T$ is faithfully flat, any zerodivisor of $T$ which is in $A$ must be a zerodivisor of $A$. Since $A$ is a domain and $\Pi[\mathfrak{q}] \subseteq A$, we must have that $P \cap \Pi[\mathfrak{q}] = (0)$ for all $P \in \Ass (T)$. 
    
    Now we show condition $(iii)$ holds. Since the completion of $A / (q_iT \cap A) = A / q_iA$ is $T/q_iT$, all zerodivisors of $T/q_iT$ contained in $A/q_iA$ are zero-divisors of $A/q_iA$. But, $A/q_iA$ is a domain since $q_iA$ is prime. 
  %  Therefore, $A/q_iA$ cannot contain any zero-divisors of $T/q_iT$, and any zero-divisors of $T/q_iT$ also in $A$ must be in $q_iA$. 
    Thus if $P'\in \Ass(T/q_iT)$ then $P' \cap A \subseteq q_iT \cap A = q_iA$. Since $q_i \in P'$, we also have $q_iA \subseteq A \cap P'$, which gives us that $P' \cap A = q_iA$. Thus, $P'$ is in the formal fiber of $A$ at $q_iA$, and it follows that $P' \subseteq Q$ for some $Q \in C_i$.

    Finally, to show condition $(iv)$ holds, suppose $i \in \mathbb{N}$ and let $Q \in C_i$. Suppose $x \in F_{\Pi[\mathfrak{q}]} \cap Q$. Then $xg=h$ for some $g,h \in \Pi[\mathfrak{q}] \subseteq A$ with $g \neq 0$. Now $h \in gT \cap A = gA$. Since $P \cap \Pi[\mathfrak{q}] = (0)$ for all $P \in \Ass (T)$, we know that $g$ is not a zero-divisor of $T$. It follows that $x \in A$ and so $x \in Q \cap A=q_iA \subseteq q_iT$.
\end{proof}

We end this section with two examples illustrating Theorem \ref{big theorem}.

\begin{example}\label{Example1}
Let $T = \mathbb{Q}[[x_1,x_2,x_3,x_4,x_5,x_6]]$, $C_1 = \{(x_1,x_2 - \alpha x_3) \, | \, \alpha \in \mathbb{Q} \}$, $C_2 = \{(x_4,x_5 - \alpha x_6) \, | \, \alpha \in \mathbb{Q} \}$, $q_1 = x_1$, $q_2 = x_4$, and if $j > 2$, then let $C_j = C_2$ and $q_j = q_2.$ Then the conditions for Theorem \ref{big theorem} are satisfied.  Therefore, $T$ contains a local domain $A$ with $\widehat{A} \cong T$ and $A$ contains prime elements $p_i$ for every $i \in \mathbb{N}$ such that $C_i$ is exactly the set of maximal elements of the formal fiber of $A$ at $p_iA$. In fact, by the proof of Theorem \ref{big theorem}, it can be arranged so that $p_1 = x_1$, $p_2 = x_4$, and $p_j = x_4$ for $j > 2$.
\end{example}

%\flag{finish the example below!!!}

We use the next lemma to aid with our second example.

\begin{lemma}\label{unitprimeavoidance} (\cite{LoeppRotthaus}, Lemma 16)
Let $(T,M)$ be a complete local ring and let $C$ be a countable set of nonmaximal prime ideals of $T$.  Let $D$ be a countable set of elements of $T$.  Let $y \in M$ such that $y \not\in P$ for all $P \in C$.  Then there exists a unit $t$ of $T$ such that
$$yt \not\in \bigcup \{P + r \, | \,P \in C, \, r \in D\}.$$
\end{lemma}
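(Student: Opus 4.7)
The plan is to reduce the problem to a direct application of Proposition~\ref{stronger coset avoidance} by parameterizing units as elements of the form $1 + m$ with $m \in M$. Since $M$ is the unique maximal ideal of $T$, every such $1 + m$ is a unit, so finding a unit $t$ with $yt \notin P + r$ for all $(P,r) \in C \times D$ is equivalent to finding $m \in M$ such that $y + ym \notin P + r$, i.e., $ym \notin P + (r - y)$, for every such pair.

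First, I would reduce to the case that $C$ consists of pairwise incomparable primes. Since $T$ is Noetherian, every ascending chain of primes terminates, so each $P \in C$ lies below some maximal element $P^{*}$ of $C$. As $P + r \subseteq P^{*} + r$, any element avoiding the cosets of the maximal elements of $C$ automatically avoids the cosets of all of $C$, so I may replace $C$ by its (still countable) set of maximal elements, which are pairwise incomparable nonmaximal primes.

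Next, I would invoke Proposition~\ref{stronger coset avoidance} with the ideal $I = yM$, the set $C$ (now incomparable), and the countable set $D' = \{r - y : r \in D\}$. The hypothesis $yM \not\subseteq P$ holds for every $P \in C$: if $yM \subseteq P$ with $P$ prime, then either $y \in P$ or $M \subseteq P$, both of which are excluded (the former by hypothesis, the latter by nonmaximality of $P$). The cardinality condition $|D'| < |T|$ follows from countability of $D$ together with Proposition~\ref{Yu 2.10}, which gives $|T| \geq c$; that proposition applies because the existence of a nonmaximal prime in $C$ forces $\dim T \geq 1$. The conclusion supplies $w \in yM$ with $w \notin P + r'$ for all $P \in C$ and $r' \in D'$. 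Writing $w = ym$ with $m \in M$ (possible because every element of $yM$ has this form), I would set $t = 1 + m$.

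Then $t$ is a unit of $T$, and $yt = y + ym = y + w$, so for every $(P,r) \in C \times D$ the element $yt - r = w - (r - y)$ does not lie in $P$, which is the desired conclusion. The only real idea in the proof is the initial trick of writing $t = 1 + m$, which bakes the unit constraint into the parameterization and converts the multiplicative condition on $t$ into an additive ideal-avoidance condition on $m$; once this reformulation is in place, Proposition~\ref{stronger coset avoidance} does all the work and no Cauchy-sequence or limit argument in the $M$-adic topology is needed.
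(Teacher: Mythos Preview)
The paper does not prove this lemma; it merely cites it from \cite{LoeppRotthaus} and uses it in Example~\ref{Example2}. So there is no ``paper's own proof'' to compare against.

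Your argument is correct. The parameterization $t = 1 + m$ with $m \in M$ cleanly converts the unit condition into an additive coset-avoidance condition, and then Proposition~\ref{stronger coset avoidance} applied to $I = yM$ and $D' = \{r - y : r \in D\}$ finishes the job. The verification that $yM \not\subseteq P$ (using primeness of $P$, $y \notin P$, and nonmaximality of $P$) is exactly right, and the passage from $w \in yM$ to $w = ym$ with $m \in M$ is immediate since $yM = \{ym : m \in M\}$. The reduction to pairwise incomparable primes via maximal elements of $C$ is also fine, as is the appeal to Proposition~\ref{Yu 2.10} for the cardinality bound. One triviality you leave implicit: if $C = \emptyset$ the conclusion is vacuous and $t = 1$ works, so assuming $C \neq \emptyset$ (hence $\dim T \geq 1$) is harmless.
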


\begin{example}\label{Example2}
Let $T = \mathbb{Q}[[x_1, \ldots,x_K]]$ for $K \geq 2$. Then $T$ has infinitely many height one prime ideals and, since $T$ is a unique factorization domain, all height one prime ideals are principal. Let $\mathcal{P} = \{p_i\}_{i\in \mathbb{N}}$ be a countable collection of prime elements of $T$ such that $p_iT = p_jT$ if and only if $i = j$. Choose $C_i$ for $i \in \mathbb{N}$ so that each $C_i$ is a nonempty finite set of prime ideals of the form $p_kT$ where $p_k \in \mathcal{P}$ and such that if $i \neq j$ then $C_i \cap C_j = \emptyset$. For $i \in \mathbb{N}$, let $C_i = \{p'_1T, \ldots, p'_sT\}$ where $p'_k \in \mathcal{P}$ for $1 \leq k \leq s$. Define $q'_i = \prod_{k = 1}^{s} p'_k$. 
%Note that there is some choice for how to define $q_k$. That is, one choice will be an associate of another choice. We make a choice and then keep $q_k$ fixed throughout the construction. 
We now use Lemma \ref{unitprimeavoidance} to define the set $\{q_i\}_{i \in \mathbb{N}}$ so that condition $(iv)$ of Theorem \ref{big theorem} holds. Given $i \in \mathbb{N}$, $q_i$ will be an associate of $q_i'$ and so conditions $(i) - (iii)$ of Theorem \ref{big theorem} will be satisfied. 

%To show that $F_{\mathbb{Z}[q_1,q_2, \ldots,]} \cap Q \subseteq q_iT$ for all $i \in \mathbb{N}$ and for all $Q \in C_i$, it suffices to show that, for all $n \in \mathbb{N}$, $F_{\mathbb{Z}[q_1,q_2, \ldots,q_n]} \cap Q \subseteq q_iT$ for all $i \in \mathbb{N}$ and for all $Q \in C_i$. To show this, we induct on $n$. Let $f/g \in F_{\mathbb{Z}[q_1]} \cap Q$ for $Q \in C_i$.

First, let $R_0 = \mathbb{Q}$, and note that $R_0 \cap Q = (0)$ for all $Q \in \bigcup_{i \in \mathbb{N}}C_i$. For $j \geq 0$, we use $R_j$ to define $q_{j + 1}$, and then we use $q_{j + 1}$ to define a countable subring $R_{j + 1}$ of $T$. We proceed in this manner to define $q_i$ for every $i \in \mathbb{N}$ and $R_j$ for every $j \geq 0$. We ensure that if $i \in \mathbb{N}$ and $Q \in C_i$ then  then $R_j \cap Q = (0)$ if $i > j$ and $R_j \cap Q = q_iR_j$ if $i \leq j$. Assume that $j \geq 0$ and that $R_j$ and $q_i$ for $i \leq j$ have been defined to satisfy these conditions. We now define $q_{j+ 1}$ and $R_{j + 1}$. Let $X_{j+ 1} = \{Q \in \bigcup_{i \in \mathbb{N}}C_i \, | \, Q \not\in C_{j+ 1}\}$. Note that $q'_{j+ 1} \not\in Q$ for all $Q \in X_{j+ 1}$. For $Q \in X_{j+ 1}$, let $D_{(Q)}$ be a full set of coset representatives of the cosets $t + Q \in T/Q$ that are algebraic over $R_j/(R_j \cap Q)$. Define $D = \bigcup_{Q \in X_{j+ 1}}D_{(Q)}$ and note that $D$ is countable. Use Lemma \ref{unitprimeavoidance} with $C = X_{j+ 1}$ and $y = q'_{j+ 1}$ to find a unit $t_{j+ 1} \in T$ such that $q'_{j+ 1}t_{j+ 1} + Q$ is transcendental over $R_j/(R_j \cap Q)$ for all $Q \in X_{j+ 1}$.  Define $q_{j+ 1} = q'_{j+ 1}t_{j+ 1}$, and define $R_{j + 1} = R_j[q_{j+ 1}]$. To see that, for this choice of $q_{j+ 1}$ and $R_{j + 1}$ our desired properties hold, suppose $i \in \mathbb{N}$ and $Q \in C_i$. Let $f \in R_{j + 1} \cap Q$. Then $f = r_nq_{j+ 1}^n + \cdots + r_1q_{j+ 1} + r_0 \in Q$ for $r_k \in R_j$. First suppose $i \neq j + 1$. Then, since $q_{j+ 1} + Q$ is transcendental over $R_j/(R_j \cap Q)$ for all $Q \in X_{j+ 1}$, we have that $r_k \in Q \cap R_j$ for $1 \leq k \leq n$. If $i > j + 1$ then $Q \cap R_j = (0)$ and so $R_{j + 1} \cap Q = (0)$.  If $i < j + 1,$ then $Q \cap R_j = q_iR_j$ and so $f \in q_iR_{j + 1}.$ It follows that $R_{j + 1} \cap Q = q_iR_{j + 1}$. It remains to consider the case where $i = j + 1$. In this case, $q_{j + 1} \in Q$, and so $r_0 \in R_j \cap Q = (0)$. Thus, $f = q_{j + 1}(r_nq_{j + 1}^{n - 1} + \cdots + r_1) \in q_{j + 1}R_{j + 1}$, and it follows that $R_{j + 1} \cap Q = q_{j + 1}R_{j + 1}$.

We now show that, for this choice of the set $\mathfrak{q} = \{q_i\}_{i \in \mathbb{N}}$, condition $(iv)$ of Theorem \ref{big theorem} is satisfied. Let $i \in \mathbb{N}$ and let $Q \in C_i$. Suppose $x \in F_{\Pi[\mathfrak{q}]} \cap Q$. Then there is a $N \in \mathbb{N}$ such that $x = f/g$ for $f,g \in R_N$ with $g \neq 0$. If $i > N$, then $f = xg \in R_N \cap Q = (0)$ and so $0 = x \in q_iT$ and we have that $F_{\Pi[\mathfrak{q}]} \cap Q \subseteq q_iT.$ Now suppose $i \leq N$. If $f,g \in q_iT$ then $f,g \in R_N \cap Q = q_iR_N$, and so $f = q_if'$ and $g = q_ig'$ for $f',g' \in R_N$. Then we have $x = f'/g'$. If $f',g' \in q_iT$, we can repeat this. Note that this process must stop since if it does not, $g \in \bigcap_{\ell \in \mathbb{N}}(q_i)^{\ell}T = (0)$, a contradiction. Therefore, without loss of generality, we may assume that at least one of $f$ and $g$ is not in $q_iT$. Now, $f = xg \in R_N \cap Q = q_iR_N \subseteq q_iT = (t_i\prod_{k = 1}^sp'_k)T$. Since $f \in q_iT$, we have that $g \not\in q_iT$. Suppose $g \in p'_mT$ for some $m \in \{1,2, \ldots,s\}$. Then $g \in R_N \cap p'_mT = q_iR_N \subseteq q_iT$, a contradiction. It follows that $x \in p'_kT$ for all $k \in \{1,2, \ldots, s\}$, and so $x \in q_iT$. Hence, $F_{\Pi[\mathfrak{q}]} \cap Q \subseteq q_iT.$ 

By Theorem \ref{big theorem}, $T$ contains a local domain $A$ with $\widehat{A} \cong T$ and $A$ contains prime elements $p''_i$ for every $i \in \mathbb{N}$ such that $C_i$ is exactly the set of maximal elements of the formal fiber of $A$ at $p''_iA$. In fact, by the proof of Theorem \ref{big theorem}, it can be arranged so that $p''_i = q_i$, for all $i \in \mathbb{N}$.
\end{example}

\section{Countable Precompletions}

It can be shown using Remark \ref{remark} that the the precompletion constructed in Lemma \ref{Lemma 6.12} is necessarily uncountable. Therefore, we have established that it is possible to control the formal fibers of countably many height one prime ideals for an uncountable precompletion. It is interesting to ask whether we can do the same for a countable one. In this section, we prove a result analogous to Theorem \ref{big theorem} where we require that the domain $A$ be countable. 

Before we begin, consider the following illustrative example. 
 Suppose that $A$ is a countable local domain with dim$(A) = 3$ and let $p$ be a prime element of $A$. Let $T$ be the completion of $A$ with respect to its maximal ideal. 
 %Now $qT$ is only contained in finitely many height one prime ideals of $T$ and so the formal fiber of $A$ at $qA$ only contains finitely many height one prime ideals of $T$. 
 The completion of the dimension two domain $A' = A/pA$ is $T' = T/pT$. Note that $A'$ is countable and, by  Proposition \ref{stronger coset avoidance}, $T'$ has uncountably many prime ideals. A nonzero element of $A'$ is contained in only finitely many height one prime ideals of $T'$, and it follows that only countably many height one prime ideals of $T'$ contain nonzero elements of $A'$. Hence, the formal fiber of $A'$ at its zero ideal contains uncountably many height one prime ideals of $T'$. As a result, the formal fiber of $A$ at $pA$ contains uncountably many height two prime ideals of $T$. So, the formal fiber of $A$ at $pA$ cannot contain countably many maximal elements. This example shows that Theorem \ref{big theorem} does not hold if we simply replace the phrase ``Then there exists a local domain..." with the phrase ``Then there exists a countable local domain..." In the countable version, therefore, we weaken the condition that $C_i$ is exactly the set of maximal elements of the formal fiber of $A$ at $p_iA$ to the condition that the elements of $C_i$ are merely contained in the formal fiber of $A$ at $p_iA$.

We now state a lemma about  cardinalities of residue fields of local rings.

%% Cardinalities of Local ring
\begin{lemma} \label{cardinalities of local rings}
    (\cite{Barrett}, Lemma 2.12) Let $(A,M)$ be a local ring. If $A/M$ is finite, then $A/M^n$ is finite for all $n$. If $A/M$ if infinite, then $|A/M^n| = |A/M|$ for all $n$.
\end{lemma}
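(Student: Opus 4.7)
The plan is to induct on $n$, using the short exact sequence of $A$-modules
\[
0 \longrightarrow M^{n-1}/M^n \longrightarrow A/M^n \longrightarrow A/M^{n-1} \longrightarrow 0,
\]
which gives $|A/M^n| = |M^{n-1}/M^n|\cdot |A/M^{n-1}|$. The base case $n=1$ is trivial, and the inductive step reduces both statements to controlling $|M^{n-1}/M^n|$ in terms of $|A/M|$.

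The key observation is that since $A$ is local (hence Noetherian, by the paper's conventions), $M$ is finitely generated, so $M^{n-1}$ is finitely generated, and therefore $M^{n-1}/M^n$ is a finitely generated module over $A/M^{n-1}$ on which $M/M^{n-1}$ acts as zero. Thus $M^{n-1}/M^n$ is a finite-dimensional vector space over the field $A/M$; write $d = \dim_{A/M}(M^{n-1}/M^n)$, so that $|M^{n-1}/M^n| = |A/M|^d$.

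In the case that $A/M$ is finite, $|A/M|^d$ is finite, and by the inductive hypothesis $|A/M^{n-1}|$ is finite; the exact sequence then forces $|A/M^n|$ to be finite. In the case that $A/M$ is infinite, we have $|A/M|^d = |A/M|$ (finite powers of an infinite cardinal equal that cardinal), and combining this with the inductive hypothesis $|A/M^{n-1}| = |A/M|$ gives $|A/M^n| = |A/M|\cdot |A/M| = |A/M|$, completing the induction.

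The only subtle point is verifying that $M^{n-1}/M^n$ really is a finite-dimensional $A/M$-vector space, which is immediate from the Noetherian hypothesis; everything else is elementary cardinal arithmetic. I do not expect any genuine obstacle in carrying this out.
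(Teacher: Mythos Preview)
Your argument is correct; the induction via the short exact sequence and the observation that $M^{n-1}/M^n$ is a finite-dimensional $A/M$-vector space is the standard route. The paper itself does not prove this lemma at all---it is simply quoted from \cite{Barrett}---so there is no in-paper argument to compare against.

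One small remark: your assertion that $|A/M|^d = |A/M|$ for infinite $A/M$ tacitly assumes $d \geq 1$. When $d = 0$ (i.e., $M^{n-1} = M^n$) the quotient $M^{n-1}/M^n$ is trivial and $|A/M|^0 = 1 \neq |A/M|$, but the conclusion $|A/M^n| = |A/M^{n-1}| = |A/M|$ still follows directly from the exact sequence, so this does not affect the validity of the proof.
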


For our precompletion $A$ to be countable, it is necessary that $T/M$ is countable. We now prove a lemma analogous to Lemma \ref{Lemma 6.12} in the previous section, where we assume $T/M$ is countable. The proof of Lemma \ref{Lemma 6.12 countable} follows the proof of Lemma \ref{Lemma 6.12} with only minor adjustments.

\begin{lemma}\label{Lemma 6.12 countable}
Let $(T,M)$, $C_i$, $q_i$, and $\mathfrak{q}$ for $i \in \mathbb{N}$ be as in Definition \ref{Definition 6.1} with the added condition that $T/M$ is countable. Moreover, suppose that for every $i \in \mathbb{N}$ and for every $P \in \Ass (T/q_iT)$, we have that $P \subseteq Q$ for some $Q \in C_i$. Let $\Pi$ denote the prime subring of $T$. Suppose $P \cap \Pi[\mathfrak{q}]=(0)$ for every $P \in \Ass (T)$ and for all $i \in \mathbb{N}$, if $Q \in C_i$ then $F_{\Pi[\mathfrak{q}]}\cap Q \subseteq q_{i}T$. Then there exists a countable local domain $A \subseteq T$ containing $q_i$ for every $i \in \mathbb{N}$ such that the following conditions hold. 
\begin{enumerate}[(i)]
    \item $\widehat{A} \cong T$, and
    \item For every $i \in \mathbb{N}$, $q_iA$ is a prime ideal of $A$ and the formal fiber of $A$ at $q_iA$ contains the elements of $C_i$.
\end{enumerate}
\end{lemma}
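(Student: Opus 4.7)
The plan is to mimic the proof of Lemma \ref{Lemma 6.12} while keeping the entire construction countable. By Lemma \ref{cardinalities of local rings}, the added hypothesis that $T/M$ is countable forces $T/M^2$ to be countable as well. By Proposition \ref{completion proving machine}, to obtain $\widehat{A} \cong T$ it suffices to construct $A$ so that (a) the map $A \to T/M^2$ is onto and (b) $IT \cap A = I$ for every finitely generated ideal $I$ of $A$. Property (a) now only requires hitting countably many cosets rather than iterating over every ideal $J$ of $T$ as in the proof of Lemma \ref{Lemma 6.12}, and this is what will let $A$ remain countable.

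First I would construct a countable base $Ct\mathfrak{q}$-subring $R_0$, following the opening of the proof of Lemma \ref{Lemma 6.12} essentially verbatim. Start with $\Pi[\mathfrak{q}]_{(\Pi[\mathfrak{q}] \cap M)}$, apply Lemma \ref{lemma 6.5} to pass to $F_{R_0''} \cap T$, use Proposition \ref{stronger coset avoidance} together with the uncountability of $T$ (Proposition \ref{Yu 2.10}) to choose an element $x \in T$ so that Lemma \ref{adjoining transcendentals} produces an infinite $Ct\mathfrak{q}$-subring, apply Lemma \ref{lemma 6.5} once more, and then apply Lemma \ref{Lemma 6.11} with $J = T$ and $u = 0$. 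Each step preserves countability, so the resulting $R_0$ is a countable $Ct\mathfrak{q}$-subring satisfying $q_iT \cap R_0 = q_iR_0$ for every $i$ and $IT \cap R_0 = I$ for every finitely generated ideal $I$ of $R_0$. Observe also that $M^2 \not\subseteq Q$ for any $Q \in \bigcup_{i \in \mathbb{N}} C_i$, since each such $Q$ is a proper nonmaximal prime and any element of $M \setminus Q$ has square in $M^2 \setminus Q$.

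Next, enumerate a set of representatives $\{u_n\}_{n \in \mathbb{N}}$ of the countable quotient $T/M^2$, and recursively build an ascending chain $R_0 \subseteq R_1 \subseteq R_2 \subseteq \cdots$ of countable $Ct\mathfrak{q}$-subrings by letting $R_n$ be the output of Lemma \ref{Lemma 6.11} applied to $R_{n-1}$ with $J = M^2$ and $u = u_n$. By that lemma, the coset $u_n + M^2$ is in the image of $R_n \to T/M^2$, $q_iT \cap R_n = q_iR_n$ for every $i$, and $IT \cap R_n = I$ for every finitely generated ideal $I$ of $R_n$. Let $A = \bigcup_{n \in \mathbb{N}} R_n$. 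By Lemma \ref{unioning lemma}, $A$ is a countable quasi-local subring of $T$ with maximal ideal $A \cap M$ satisfying conditions (ii) and (iii) of Definition \ref{Definition 6.1}; in particular, $A$ is a domain containing every $q_i$. The map $A \to T/M^2$ is onto by construction, and given a finitely generated ideal $I = (a_1, \dots, a_k)A$ with $x \in IT \cap A$, choosing $N$ large enough that $a_1, \dots, a_k, x \in R_N$ yields $x \in (a_1, \dots, a_k)T \cap R_N = (a_1, \dots, a_k)R_N \subseteq I$. Thus $IT \cap A = I$ for every finitely generated ideal $I$ of $A$, and Proposition \ref{completion proving machine} gives $\widehat{A} \cong T$. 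Finally, Lemma \ref{Q equal q_i} yields $Q \cap A = q_iA$ for every $Q \in C_i$, so $q_iA$ is a prime ideal of $A$ and every $Q \in C_i$ sits in the formal fiber of $A$ at $q_iA$.

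The main subtlety, as the excerpt has already flagged, is that we no longer get that $C_i$ is exactly the set of maximal elements of the formal fiber at $q_iA$: because we iterate only over $T/M^2$ and not over every ideal $J \not\subseteq \bigcup_i C_i$, we have no way to preclude other primes of $T$ from lying in the formal fiber at $q_iA$. This is unavoidable for countable $A$ and is why the statement only asks that $C_i$ be contained in the formal fiber. The only other thing to verify is that Lemma \ref{Lemma 6.11} preserves countability at each step, which it does because it returns a ring of the same cardinality as its input.
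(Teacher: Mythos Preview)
Your proposal is correct and follows essentially the same approach as the paper: the paper's proof simply instructs one to ``follow the proof of Lemma~\ref{Lemma 6.12} with $\Omega = T/M^2$,'' invoking Lemma~\ref{cardinalities of local rings} for countability of $T/M^2$ and applying Lemma~\ref{Lemma 6.11} with $J = M^2$ at each step, exactly as you do. Your version is somewhat more explicit (you spell out the $R_0$ construction, the verification that $M^2 \not\subseteq Q$, and the $IT\cap A = I$ argument via a union-of-chain computation) and replaces the well-ordered $\Omega$ machinery with a plain $\mathbb{N}$-indexed recursion, which is cleaner here since $\Omega$ is countable anyway.
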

\begin{proof}
Follow the proof of Lemma \ref{Lemma 6.12} with $\Omega = T/M^2$. By assumption, $T/M$ is countable, so by Lemma \ref{cardinalities of local rings}, $\Omega$ is countable. Note that, since elements in each $C_i$ are nonmaximal, we have that $M^2 \not\subseteq Q$ for all $Q \in \bigcup_{i \in \mathbb{N}}C_i$. Now make the following minor adjustment to the proof of Lemma \ref{Lemma 6.12}.  When constructing the chain of $Ct\mathfrak{q}$-subrings of $T$, if 
$\gamma(\alpha) < \alpha$ then $\gamma(\alpha) = u + M^2$ for some $u + M^2 \in \Omega$. Define $R_{\alpha}$ to be the $Ct\mathfrak{q}$-subring of $T$ obtained from Lemma \ref{Lemma 6.11} so that $|R_{\alpha}| = |R_{\gamma(\alpha)}|$, $u + M^2$ is in the image of the map $R_{\alpha} \longrightarrow T/M^2$,  and for every finitely generated ideal $I$ of $R_{\alpha}$ we have $IT \cap R_{\alpha} = I$. Then by the proof of Lemma \ref{Lemma 6.12}, $A$ is a domain containing $q_i$ for every $i \in \mathbb{N}$, $\widehat{A} \cong T$, $q_iA$ is a prime ideal of $A$, and the formal fiber of $A$ at $q_iA$ contains the elements of $C_i$. Moreover, since $\Omega$ is countable, $A$ is countable.
\end{proof}

%\begin{theorem}
%Let $(T,M)$ be a complete local ring. Let $\Pi$ denote the prime subring, $\Phi$ a countable index set, and $C_i \subsetneq \Spec(T)$ for all $i \in \Phi$ such that the elements in $C:=\bigcup_{i \in \Phi} C_i$ are non-maximal and pairwise incomparable in $T$. Then, there exists a countable local domain $A$ and distinct elements $q_i \in A$ for all $i \in \Phi$ such that $\widehat{A} \cong T$ and $q_iA$ is a prime ideal $A$ whose formal fiber is semi-local with maximal ideals the elements of $C_i$ for all $i \in \Phi$ if and only if:
%    \begin{enumerate}[(i)]
%        \item There exist nonzero elements $q_i \in T$ such that $q_i \in \bigcap_{Q \in C_i}Q \setminus \bigcup_{Q' \in (C \setminus C_i)}Q'$ for all $i \in \Phi$,
%        \item $P \cap \Pi[q_1,\dots, q_i,\dots]=(0)$ for all $P \in \Ass(T)$,
%        \item $F_{\Pi[q_1,\dots, q_i,\dots]}\cap Q \subseteq q_iT$ for all $Q \in C_i$ and all $i \in \Phi$, and
%        \item $T/M$ is countable.
%    \end{enumerate}
%\end{theorem}

We are now ready to state and prove the analog of Theorem \ref{big theorem} where we require $A$ to be countable. 

%%%BIG THEOREM
\begin{theorem} \label{big theorem countable}
Let $(T,M)$ be a complete local ring and let $\Pi$ denote the prime subring of $T$. For each $i \in \mathbb{N}$, let $C_i$ be a nonempty countable set of nonmaximal pairwise incomparable prime ideals of $T$ and suppose that, if $i \neq j$, then either $C_i = C_j$ or no element of $C_i$ is contained in an element of $C_j$. Then, there exists a countable local domain $A \subseteq T$ with $\widehat{A} \cong T$ and, for all $i \in \mathbb{N}$ there is a nonzero prime element $p_i$ of $A$ such that all elements of $C_i$ are in the formal fiber of $A$ at $p_iA$ if and only if there exists a set of nonzero elements $\mathfrak{q} = \{q_i\}_{i = 1}^{\infty}$ of $T$ satisfying the following conditions
    \begin{enumerate}[(i)]
        \item For $i \in \mathbb{N}$ we have $q_i \in \bigcap_{Q \in C_i}Q$ and, if $q_jT \neq q_iT$ and $Q' \in C_j$, then $q_i \not\in Q'$,
        \item $P \cap \Pi[\mathfrak{q}]=(0)$ for all $P \in \Ass(T)$,
        \item If $i \in \mathbb{N}$ and $Q \in C_i$ or $Q \in \Ass(T/q_iT)$, then $F_{\Pi[\mathfrak{q}]}\cap Q \subseteq q_iT$, 
        \item $T/M$ is countable, and
        \item If $q_iT \neq q_jT$ and $P_i \in \Ass(T/q_iT)$, $P_j \in \Ass(T/q_jT)$, then $P_i \not\subseteq P_j$.
    \end{enumerate}
\end{theorem}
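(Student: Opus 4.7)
The plan is to prove the two implications separately, with necessity $(\Rightarrow)$ mirroring the corresponding argument of Theorem \ref{big theorem} and sufficiency $(\Leftarrow)$ reducing to Lemma \ref{Lemma 6.12 countable} after a regrouping that absorbs associated primes of $T/q_iT$ into enlarged versions of the $C_i$'s.

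For $(\Rightarrow)$, set $q_i := p_i$. Faithful flatness of $A \subseteq T$ gives conditions (i), (ii), and the $C_i$-part of (iii) essentially as in Theorem \ref{big theorem}: zerodivisors of $T$ in the domain $A$ vanish (giving (ii)); and writing $x = h/g \in F_{\Pi[\mathfrak{q}]} \cap Q$, the identity $gT \cap A = gA$ forces $x \in A \cap Q = q_iA$ for $Q \in C_i$. To extend (iii) to $Q \in \Ass(T/q_iT)$, observe that $A \cap Q$ consists of zerodivisors on $T/q_iT$; since $A/q_iA \to T/q_iT$ is faithfully flat and $A/q_iA$ is a domain, any such element lies in $q_iA$. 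Condition (iv) is immediate from $A/(A \cap M) \cong T/M$ and $A$ countable. Conditions (i) and (v) both follow by the same mechanism: if an inclusion $q_iA \subseteq q_jA$ can be derived, then since both are nonzero height-one primes of the Noetherian domain $A$ they must be equal, so $q_iT = q_jT$ by faithful flatness, contradicting the hypothesis; for (i) the inclusion arises from $q_i \in Q' \cap A = q_jA$, and for (v) from the extended (iii) applied to $q_i \in P_j \cap A \subseteq q_jA$.

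For $(\Leftarrow)$, the obstruction is that Lemma \ref{Lemma 6.12 countable} requires every $P \in \Ass(T/q_iT)$ to be contained in some element of $C_i$, which is not assumed here. To repair this, define the equivalence relation $i \sim j \iff q_iT = q_jT$ on $\mathbb{N}$, enumerate the equivalence classes as $\{[i_k]\}_{k \in \mathbb{N}}$, set $\tilde{q}_k := q_{i_k}$, and let $\tilde{C}_k$ be the set of maximal elements of $\bigcup_{j \sim i_k} C_j \cup \Ass(T/\tilde{q}_k T)$. The plan is to show that the data $(T, \tilde{C}_k, \tilde{q}_k)$ satisfies Definition \ref{Definition 6.1} and the additional hypotheses of Lemma \ref{Lemma 6.12 countable}, then apply the lemma. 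The crucial step is the cross-incomparability of the $\tilde{C}_k$'s: for $k \neq k'$ (so $\tilde{q}_k T \neq \tilde{q}_{k'} T$), the verification splits into four cases based on whether each compared element is of $C$-type or $\Ass$-type. The $C$-vs-$C$ case follows from the original hypothesis on the $C_i$'s together with the observation (via condition (i)) that $q_lT \neq q_{l'}T$ forces $C_l \neq C_{l'}$; the $\Ass$-vs-$\Ass$ case is condition (v); and each mixed case reduces to one of these by passing to a minimal prime of $\tilde{q}_k T$ contained in the $C$-element (automatically an associated prime), or by applying condition (i) directly when the containment forces $\tilde{q}_k \in Q' \in C_l$. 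The enlarged condition (iii) then gives $F_{\Pi[\mathfrak{q}]} \cap Q \subseteq \tilde{q}_k T$ for all $Q \in \tilde{C}_k$, and (ii) is inherited. Applying Lemma \ref{Lemma 6.12 countable} yields a countable local domain $A$ with $\widehat{A} \cong T$ such that each $\tilde{q}_k A$ is prime and $\tilde{C}_k$ lies in the formal fiber of $\tilde{q}_k A$. Setting $p_i := \tilde{q}_k$ when $i \sim i_k$ completes the argument: every $Q \in C_i$ contains $\tilde{q}_k$ (since $q_iT = \tilde{q}_k T \subseteq Q$) and is contained in some maximal element $Q^{\max} \in \tilde{C}_k$, so $\tilde{q}_kA \subseteq Q \cap A \subseteq Q^{\max} \cap A = \tilde{q}_kA$, forcing $Q \cap A = p_iA$.

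The main technical obstacle is the verification of the two mixed cross-incomparability cases, where the interplay of conditions (i) and (v) with the minimal-prime passage is what replaces the single strong hypothesis originally assumed by Lemma \ref{Lemma 6.12 countable}.
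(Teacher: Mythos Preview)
Your argument is correct and follows essentially the same route as the paper's proof: for $(\Leftarrow)$ you enlarge each $C_i$ by merging all $C_j$ with $q_jT=q_iT$ and adjoining the relevant associated primes of $T/q_iT$, then invoke Lemma \ref{Lemma 6.12 countable}; for $(\Rightarrow)$ you set $q_i=p_i$ and argue via faithful flatness exactly as in Theorem \ref{big theorem}. The only cosmetic difference is that you first collapse the index set to equivalence classes and take maximal elements of the combined set, whereas the paper keeps all indices and defines $C''_i=C'_i\cup\{P\in X_i : P\not\subseteq Q \text{ for all }Q\in C'_i\}$; your version is arguably cleaner and you supply the explicit four-case verification of cross-incomparability that the paper leaves to the reader.
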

\begin{proof}
    Suppose there is a set of nonzero elements $\mathfrak{q} = \{q_i\}_{i = 1}^{\infty}$ of $T$ such that conditions $(i)-(v)$ hold. Since $P \cap \Pi[\mathfrak{q}] = (0)$ for all $P \in \Ass(T)$, $q_i$ is regular in $T$ for all $i \in \mathbb{N}$. For every $i \in \mathbb{N}$, let $X_i$ be the set of maximal elements of $\Ass(T/q_iT)$. Define $$C'_i = C_i \cup \{Q \in C_j \, | \, q_jT = q_iT \}$$ and define $$C''_i = C'_i \cup \{P \in X_i \, | \, P \not\subseteq Q \mbox{ for all } Q \in C'_i\}.$$ Then $T$, $C''_i$, $q_i$, and $\mathfrak{q}$ for $i \in \mathbb{N}$ satisfies the conditions in Definition \ref{Definition 6.1}. By definition of $C''_i$, we have that if $P \in \Ass(T/q_iT)$, then $P \subseteq Q$ for some $Q \in C''_i.$ By Lemma \ref{Lemma 6.12 countable}, the desired domain $A$ exists with $p_i = q_i$ for every $i \in \mathbb{N}$.

    Now suppose there exists a countable local domain $A \subseteq T$ with $\widehat{A} \cong T$ and, for all $i \in \mathbb{N}$ there is a nonzero prime element $p_i$ of $A$ such that all elements of $C_i$ are in the formal fiber of $A$ at $p_iA$. Let $q_i = p_i$ for every $i \in \mathbb{N}$. We claim that $\mathfrak{q} = \{q_i\}_{i = 1}^{\infty}$ is the desired set of elements of $T$. Let $i \in \mathbb{N}$ and suppose $Q \in C_i$. Then $Q$ is in the formal fiber of $A$ at $q_iA$ and so $q_i \in Q$. It follows that $q_i \in \bigcap_{Q \in C_i}Q$. Now suppose $q_jT \neq q_iT$ and let $Q' \in C_j$. If $q_i \in Q'$ then $q_i \in Q' \cap A = q_jA$. Since $q_i$ and $q_j$ are both prime elements of $A$, we have $q_iA = q_jA$ and so $q_iT = q_jT$, a contradiction.  Hence, condition $(i)$ of the theorem holds. The arguments that conditions $(ii)$ and $(iii)$ hold follow from the arguments in the last three paragraphs of the proof of Theorem \ref{big theorem}. Since $\widehat{A} = T$, we have that $|T/M| = |A/(A \cap M)| \leq |A|$. So, $T/M$ is countable and condition $(iv)$ is satisfied. Now suppose $q_iT \neq q_jT$ with $P_i \in \Ass(T/q_iT)$, $P_j \in \Ass(T/q_jT)$ and $P_i \subseteq P_j$.  The completion of the domain $A' = A/q_iA$ is $T' = T/q_iT$. All associated prime ideals of $T'$ must be in the formal fiber of $A'$ at $(0)$. It follows that $A \cap P_i = q_iA$.  Similarly, $A \cap P_j = q_jA$. Therefore, $q_iA \subseteq q_jA$. Since $q_i$ and $q_j$ are nonzero prime elements of $A$, we have $q_iA = q_jA$. Therefore $q_iT = q_jT$, a contradiction. It follows that condition $(v)$ holds.
\end{proof}

Note that the conditions of Theorem \ref{big theorem countable} are satisfied for both Example \ref{Example1} and Example \ref{Example2}, and so, for those examples, there exists a countable local domain $A \subseteq T$ with $\widehat{A} \cong T$ and $A$ contains prime elements $p_i$ for every $i \in \mathbb{N}$ such that all elements of $C_i$ are in the formal fiber of $A$ at $p_iA$.

%\flag{put in something about the other example?}

\section{Excellent and Quasi-excellent Precompletions}
In this section we prove analogous results from Section \ref{uncountable section} where we require the domain $A$ to be quasi-excellent, and where we require the domain $A$ to be excellent. We begin with definitions and several important results about quasi-excellent rings and excellent rings. For the remainder of this paper, if $A$ is a ring and $P$ is a prime ideal of $A$, we use $k(P)$ to denote the field $A_P/PA_P$.

Recall that a Noetherian ring $A$ is said to be quasi-excellent if it satisfies the following two conditions:

\begin{enumerate}
    \item for all $P \in \Spec{(A)}$, the ring $\widehat{A} \otimes_A L$ is regular for every finite field extension $L$ of $k(P)$;
    \item Reg$(B) \subset \Spec(B)$ is open for every finitely generated $A$-algebra $B$.
\end{enumerate}

\noindent Recall, also, that a quasi-excellent ring is said to be excellent if it is universally catenary.

The next lemma gives sufficient conditions for a precompletion of a complete local ring $T$ to be quasi-excellent in the case that $T$ contains the rationals.

\begin{lemma}[\cite{Baily}, Lemma 2.6]\label{Unknown 2.6}
     Let $(T, M)$ be a complete local ring containing the rationals. Given a local subring $(A, A \cap M)$ of $T$ such that $\widehat{A}=T$, the ring $A$ is quasi-excellent if and only if, for every $P \in \Spec(A)$ and every $Q \in \Spec(T)$ satisfying $Q \cap A = P$, the ring $(T/PT)_Q$ is a regular local ring.
\end{lemma}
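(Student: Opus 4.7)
The plan is to translate each side of the stated equivalence into a statement about the formal fibers of $A$, and then to invoke the standard characterization of quasi-excellence for local Noetherian rings. The central computation is the natural isomorphism $T\otimes_A k(P)\cong (T/PT)_{A\setminus P}$: primes of this ring correspond bijectively to primes $Q$ of $T$ satisfying $Q\cap A=P$ (since $PT\subseteq Q$ forces $P\subseteq Q\cap A$, and disjointness from $A\setminus P$ forces $Q\cap A\subseteq P$), and localizing $T\otimes_A k(P)$ at such a prime yields $(T/PT)_Q$. Hence regularity of the Noetherian ring $T\otimes_A k(P)$ is equivalent to regularity of $(T/PT)_Q$ for every $Q$ lying over $P$.

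For the forward direction, assume $A$ is quasi-excellent. By the defining condition of quasi-excellence, $\widehat{A}\otimes_A L$ is regular for every $P\in\Spec(A)$ and every finite field extension $L$ of $k(P)$; taking $L=k(P)$ gives that $T\otimes_A k(P)$ is regular, and the equivalence above immediately yields that $(T/PT)_Q$ is a regular local ring for every $Q\in\Spec(T)$ with $Q\cap A=P$.

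For the backward direction, suppose $(T/PT)_Q$ is regular whenever $Q\cap A=P$. Then $T\otimes_A k(P)$ is regular for every $P\in\Spec(A)$. Because $T$ (and hence $A$) contains $\Q$, every field extension $L/k(P)$ is separable, so the base change $T\otimes_A L\cong (T\otimes_A k(P))\otimes_{k(P)}L$ remains regular by preservation of regularity under separable field extensions. This establishes geometric regularity of the formal fibers of $A$, i.e., $A$ is a G-ring. To upgrade this to full quasi-excellence, one still needs the $J\text{-}2$ condition, namely openness of the regular locus in every finitely generated $A$-algebra; this is provided by the standard theorem (Nishimura; see Matsumura's \emph{Commutative Ring Theory}) that a local Noetherian G-ring is automatically quasi-excellent.

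The principal obstacle is precisely the $J\text{-}2$ half of the backward direction: pointwise regularity of the formal fibers does not by itself yield openness of the regular locus in arbitrary finitely generated $A$-algebras, so the argument is not self-contained and must rely on the cited local G-ring theorem. The characteristic zero hypothesis, by contrast, enters in a single, clean way: it lets separability of field extensions promote regularity of formal fibers to geometric regularity, which is exactly what is needed to invoke the G-ring machinery.
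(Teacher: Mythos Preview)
The paper does not supply its own proof of this lemma; it is quoted from \cite{Baily} and used as a black box. There is therefore nothing in the paper to compare your argument against.

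Your sketch is the standard route and is mathematically sound. The identification $T\otimes_A k(P)\cong (T/PT)_{\overline{A\setminus P}}$, the bijection between its primes and the $Q\in\Spec(T)$ with $Q\cap A=P$, and the use of characteristic zero to promote regular formal fibers to geometrically regular ones are all correct. The only external input, as you yourself flag, is the implication ``local Noetherian G-ring $\Rightarrow$ J-2''; this is indeed a theorem (see, e.g., EGA~IV$_2$, Proposition~7.8.3, which shows that a semi-local G-ring is automatically J-2, hence quasi-excellent). The attribution to Nishimura is not the usual one---the result goes back to Grothendieck---but the mathematical content you invoke is correct.
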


\noindent Note that, using the language of formal fibers, Lemma \ref{Unknown 2.6} says that $A$ is quasi-excellent if and only if, for every prime ideal $P$ of $A$, the ring $(T/PT)_Q$ is a regular local ring for all elements $Q$ in the formal fiber of $A$ at $P$.

The following two theorems follow from  Theorem 31.6 and Theorem 31.7 in \cite{Matsumura}.

\begin{theorem}\label{Yu 2.7}
    Let $A$ be a local ring such that its completion is equidimensional. Then $A$ is universally catenary.
\end{theorem}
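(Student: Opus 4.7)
The plan is to realize Theorem \ref{Yu 2.7} as a direct corollary of Ratliff's theorem, in the form packaged in \cite{Matsumura} as Theorems 31.6 and 31.7. The first of these provides a characterization of universal catenarity for Noetherian local rings in terms of the completion, while the second supplies the translation between equidimensionality of $\widehat{A}$ and the equidimensionality condition on the quotients $\widehat{A}/\mathfrak{p}$ at minimal primes that is actually required by the criterion.

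First I would recall that every complete local ring is catenary: by Cohen's structure theorem, $\widehat{A}$ is a homomorphic image of a regular local ring, and regular local rings are universally catenary. Combined with the hypothesis that $\widehat{A}$ is equidimensional, this means that every saturated chain of primes from a minimal prime of $\widehat{A}$ to the maximal ideal has the same length, namely $\dim \widehat{A}$. Consequently, for every minimal prime $\mathfrak{p}$ of $\widehat{A}$, the quotient $\widehat{A}/\mathfrak{p}$ is equidimensional of dimension $\dim \widehat{A}$. This is precisely the quasi-unmixed property of $\widehat{A}$ that Theorem 31.7 isolates as the content of equidimensionality in the presence of catenarity.

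Next, I would invoke Theorem 31.6 of \cite{Matsumura}: a Noetherian local ring is universally catenary if and only if its completion is quasi-unmixed in the above sense. Applying this to $A$ with the quasi-unmixedness established in the previous paragraph yields that $A$ is universally catenary, as desired.

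The only real obstacle is bookkeeping: making sure that the precise statements of Theorems 31.6 and 31.7 are applied in the correct direction, and that no extra hypothesis (such as $\widehat{A}$ being reduced, or $A$ being equicharacteristic) sneaks in on either side. Beyond that, the argument is purely citational; no new constructions or ring-theoretic tricks are required, so the ``proof'' for the paper can reasonably be compressed to a pointer to \cite{Matsumura}, Theorems 31.6 and 31.7.
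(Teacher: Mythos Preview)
Your proposal is correct and matches the paper exactly: the paper gives no proof for Theorem \ref{Yu 2.7} beyond noting, just before the statement, that it follows from Theorems 31.6 and 31.7 of \cite{Matsumura}. Your final paragraph is precisely what the paper does---a bare citation to \cite{Matsumura}---so there is nothing to add.
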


\begin{theorem}\label{converse}
Suppose $A$ is a local domain that is universally catenary.  Then the completion of $A$ is equidimensional.
\end{theorem}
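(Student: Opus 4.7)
The plan is to invoke Ratliff's theorem on formal equidimensionality, which appears as Theorem 31.7 in \cite{Matsumura}. That theorem characterizes when the completion of a Noetherian local ring is equidimensional: a Noetherian local ring $R$ is formally equidimensional (meaning $\dim \widehat{R}/P = \dim \widehat{R}$ for every minimal prime $P$ of $\widehat{R}$) if and only if $R$ is both equidimensional and universally catenary.

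To apply this to our setting, I would first verify that the two hypotheses of Ratliff's theorem hold for $A$. Universal catenarity is given by assumption. For equidimensionality of $A$ itself, which requires $\dim A/\mathfrak{p} = \dim A$ for every minimal prime $\mathfrak{p}$ of $A$, observe that $A$ is a domain, so its unique minimal prime is $(0)$, and $\dim A/(0) = \dim A$ holds trivially. Both hypotheses are therefore satisfied.

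With both hypotheses verified, the conclusion of Ratliff's theorem yields directly that $\widehat{A}$ is equidimensional, which is exactly the statement to be proved. Since the argument reduces to a citation combined with the trivial fact that a domain is automatically equidimensional, there is no serious obstacle. The only care required is to check that the notion of equidimensionality used in \cite{Matsumura} agrees with the notion intended here, namely that every minimal prime $P$ of $\widehat{A}$ satisfies $\dim \widehat{A}/P = \dim \widehat{A} = \dim A$; once this translation is made, the result is immediate.
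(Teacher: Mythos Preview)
Your proposal is correct and matches the paper's approach exactly: the paper states that this theorem follows from Theorem 31.7 in \cite{Matsumura}, which is precisely Ratliff's theorem as you cite. Your added remark that a domain is trivially equidimensional makes explicit the only thing one needs to check before invoking the citation.
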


We are now ready to prove the main theorems of this section. In particular, we show the quasi-excellent and excellent analogs to Theorem \ref{big theorem}.

\begin{theorem} \label{glue quasi-excellent}
    Let $T$ be a complete local ring containing the rationals, and let $\Pi \cong \mathbb{Z}$ denote the prime subring of $T$. For each $i \in \mathbb{N}$, let $C_i$ be a nonempty countable set of nonmaximal pairwise incomparable prime ideals of $T$ and suppose that, if $i \neq j$, then either $C_i = C_j$ or no element of $C_i$ is contained in an element of $C_j$. Then, there exists a quasi-excellent local domain $A \subseteq T$ with $\widehat{A} \cong T$ and, for all $i \in \mathbb{N}$, there is a nonzero prime element $p_i$ of $A$, such that $C_i$ is exactly the set of maximal elements of the formal fiber of $A$ at $p_iA$ if and only if there exists a set of nonzero elements $\mathfrak{q} = \{q_i\}_{i = 1}^{\infty}$ of $T$ satisfying the following conditions 
    \begin{enumerate}[(i)]
        
        \item For $i \in \mathbb{N}$ we have $q_i \in \bigcap_{Q \in C_i}Q$ and, if $C_j \neq C_i$ and $Q' \in C_j$, then $q_i \not\in Q'$,
        \item $P \cap \Pi[\mathfrak{q}]=(0)$ for all $P \in \Ass(T)$,
        \item If $i \in \mathbb{N}$ and $P' \in \Ass(T/q_iT)$, then $P' \subseteq Q$ for some $Q \in C_i$,
        \item If $i \in \mathbb{N}$ and $Q \in C_i$, then $F_{\Pi[\mathfrak{q}]}\cap Q \subseteq q_iT$, 
        \item If $Q \in \bigcup_{i \in \mathbb{N}}C_i$ and $q_i \in Q$ then $T_Q/q_iT_Q$ is a regular local ring, and
        \item If $P$ is a prime ideal of $T$ such that there is an $i \in \mathbb{N}$ with $P \subseteq Q$ for some $Q \in C_i$ and $q_i \not\in P$, then $T_P$ is a regular local ring. 
        \end{enumerate}
\end{theorem}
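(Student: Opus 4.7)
The plan is to deduce both directions from Theorem~\ref{big theorem} together with the quasi-excellence criterion Lemma~\ref{Unknown 2.6}; no new construction is needed beyond what Theorem~\ref{big theorem} already provides.

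For ($\Rightarrow$), I would set $q_i = p_i$ and get conditions (i)--(iv) from the necessity half of Theorem~\ref{big theorem}. For (v), observe that if $q_i \in Q \in C_j$, then (i) forces $C_j = C_i$, so $Q \cap A = q_iA$; quasi-excellence via Lemma~\ref{Unknown 2.6} then gives $(T/q_iT)_Q$ regular. For (vi), if $P \subseteq Q \in C_i$ and $q_i \notin P$, then $P \cap A \subseteq q_iA$ with $q_i \notin P \cap A$; since $q_iA$ is height one in the domain $A$, this forces $P \cap A = (0)$, and Lemma~\ref{Unknown 2.6} applied to $((0),P)$ gives $T_P$ regular.

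For ($\Leftarrow$), I would invoke Theorem~\ref{big theorem} to obtain $A \subseteq T$ with $\widehat{A}\cong T$ and prime elements $p_i = q_i$ having $C_i$ as the maximal elements of the formal fiber of $A$ at $p_iA$. To verify quasi-excellence via Lemma~\ref{Unknown 2.6}, I would split into cases for $P' \in \Spec(A)$ with $Q \in \Spec(T)$ contracting to $P'$. When $P'$ is nonzero and not $q_iA$, Remark~\ref{remark} gives that $P'T$ is the unique such $Q$ and $A/P' \cong T/P'T$, so $(T/P'T)_{P'T}$ is the fraction field of the domain $T/P'T$, hence regular. When $P' = q_iA$, the prime $Q$ lies in the formal fiber of $A$ at $q_iA$, so $Q \subseteq Q^*$ for some $Q^* \in C_i$; condition (v) makes $(T/q_iT)_{Q^*}$ regular, and $(T/q_iT)_Q$ is then a localization of this regular local ring at a prime, hence regular.

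The main obstacle is the case $P' = (0)$. I would argue that any $Q \in \Spec(T)$ with $Q \cap A = (0)$ must lie inside some $Q^* \in \bigcup_i C_i$, since otherwise Remark~\ref{remark}'s bijection would give $Q \cap A \in Y$, contradicting $Q \cap A = (0)$. Once $Q \subseteq Q^* \in C_i$, we get $Q \cap A \subseteq q_iA$; if $q_i$ were in $Q$, then $q_iA$ would be contained in $Q \cap A = (0)$, impossible, so $q_i \notin Q$, and condition (vi) supplies $T_Q$ regular. Pinning down this classification of the primes of $T$ lying over $(0)$ in $A$ is what makes conditions (v) and (vi) exactly sufficient, and it is the crux of the argument.
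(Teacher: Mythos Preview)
Your proposal is correct and follows essentially the same approach as the paper. The only cosmetic difference is that for the sufficiency direction the paper cites Lemma~\ref{Lemma 6.12} directly (using its conclusions (iii) and (iv) to handle the cases $P'\neq 0, q_iA$ and $P'=(0)$), whereas you cite Theorem~\ref{big theorem} together with Remark~\ref{remark}; since Remark~\ref{remark} is stated for the $A$ produced by Lemma~\ref{Lemma 6.12}, you should make explicit that you are taking that specific $A$ rather than an arbitrary one satisfying the conclusion of Theorem~\ref{big theorem}.
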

\begin{proof}
    First, suppose $T$ is the completion of a quasi-excellent local domain $A \subseteq T$ with $\widehat{A} \cong T$ and, for all $i \in \mathbb{N}$, there is a nonzero prime element $p_i$ of $A$, such that $C_i$ is exactly the set of maximal elements of the formal fiber of $A$ at $p_iA$. Let $q_i = p_i$ for every $i \in \mathbb{N}$.  We claim that $\mathfrak{q} = \{q_i\}_{i = 1}^{\infty}$ is the desired set of elements of $T$. Conditions $(i)$, $(ii)$, $(iii)$, and $(iv)$ follow from the proof of Theorem \ref{big theorem}. 
  %  
%    Since $A$ is quasi-excellent, $T \otimes_A k(q_iA)$ is a regular ring for all $i \in \mathbb{N}$. Fix $i \in \mathbb{N}$. Then if $Q \in C_i$, $T_Q/q_iT_Q$ is isomorphic to $T \otimes_A k(q_iA)$ localized at one of its maximal ideals. It follows that $T_Q/q_iT_Q$ is a regular local ring
 %   
    If $Q \in \bigcup_{i \in \mathbb{N}}C_i$ and $q_i \in Q$ then $Q$ is in the formal fiber of $A$ at $q_iA$ and so by Lemma \ref{Unknown 2.6}, the ring $(T/q_iT)_Q$ is a regular local ring.  It follows that condition $(v)$ holds. To show that condition $(vi)$ holds, suppose $P$ is a prime ideal of $T$ such that there is an $i \in \mathbb{N}$ with with $P \subseteq Q$ for some $Q \in C_i$ and $q_i \not\in P$. Then we have $A \cap P \subseteq A \cap Q = q_iA$ and it follows that $A \cap P = (0)$.  Hence $P$ is in the formal fiber of $A$ at $(0)$. By Lemma \ref{Unknown 2.6} the ring $(T/(0)T)_P \cong T_P$ is a regular local ring.
    
    %Thus $T_P$ is isomorphic to $T \otimes_A k((0))$ localized at one of its prime ideals. Since $A$ is quasi-excellent, $T \otimes_A k((0))$ is a regular ring, and it follows that $T_P$ is a regular local ring.
  
    Now suppose there exists a set of nonzero elements $\mathfrak{q} = \{q_i\}_{i = 1}^{\infty}$ of $T$ such that conditions $(i)-(vi)$ hold. By Lemma \ref{Lemma 6.12}, there exists a local domain $A \subseteq T$ such that $\widehat{A} \cong T$, and, for every $i \in \mathbb{N}$, $q_i \in A$ and $q_iA$ is a prime ideal of $A$ with the set of maximal elements of the formal fiber of $A$ at $q_iA$ exactly $C_i$. Moreover, if $J$ is an ideal of $T$ satisfying that $J \nsubseteq Q$ for all $Q \in \bigcup_{i \in \mathbb{N}}C_i$, then the map $A \rightarrow T/J$ is onto and $J \cap A \nsubseteq Q$ for all $Q \in \bigcup_{i \in \mathbb{N}}C_i$. Further, if $P'$ is a nonzero prime ideal of $A$ with $P' \neq q_iA$ for all $i \in \mathbb{N}$, then $T \otimes_A k(P') \cong k(P')$, where $k(P')=A_P'/P'A_P'$.
    
    We now show using Lemma \ref{Unknown 2.6} that $A$ is quasi-excellent. First assume that $P'$ is a nonzero prime ideal of $A$ with $P' \neq q_iA$ for all $i \in \mathbb{N}$. Then if $Q$ is in the formal fiber of $A$ at $P'$, the ring $(T/P'T)_{\overline{A - P'}} \cong T \otimes_A k(P')$ is isomorphic to $k(P')$, a field. Since $(T/P'T)_Q$ is a localization of the ring $(T/P'T)_{\overline{A - P'}}$, we have that $(T/P'T)_Q$ is also a field and so it is a regular local ring.
    %
   % Since $T$ contains the rationals, so does $A$. It follows that if $P' \in \Spec(A)$ then the characteristic of the field $k(P') = A_P'/PA_P'$ is zero. Hence, the only purely inseparable field extension of $k(P')$ is itself. Thus, by Remark \ref{quasi-excellent definition}, it is left to show that for all $P' \in \Spec(A)$, $T \otimes_A k(P')$ is regular. If $P'$ is a nonzero prime ideal of $A$ with $P' \neq q_iA$ for all $i \in \mathbb{N}$, then $T \otimes_A k(P') \cong k(P')$, and so in this case, $T \otimes_A k(P')$ is regular. 
   % 
    Now suppose $P' = q_iA$ for some $i \in \mathbb{N}$. Let $Q'$ be in the formal fiber of $A$ at $q_iA$. Then $Q' \subseteq Q$ for some $Q \in C_i$. Since condition $(v)$ holds, we have $(T/q_iT)_Q \cong (T_Q/q_iT_Q)$ is a regular local ring. It follows that $(T/q_iT)_{Q'}$ is a regular local ring.
    %
    %Since the set of maximal elements of the formal fiber of $A$ at $q_iA$ is exactly $C_i$ the ring $T \otimes_A k((q_iA))$ localized at one of its maximal ideals is isomorphic to $T_Q/q_iT_Q$ for some $Q \in C_i$. Since $T_Q/q_iT_Q$ is a regular local ring, it follows that $T \otimes_A k((q_iA))$ is a regular ring. 
    %
    Finally, we consider the case where $P' = (0)$. If the prime ideal $J$ of $T$ is in the formal fiber of $A$ at $(0)$, then $J \cap A = (0)$. It follows by condition $(iii)$ of Lemma \ref{Lemma 6.12} that there is some $i \in \mathbb{N}$ such that $J \subseteq Q$ for some $Q \in C_i$. Since $J \cap A = (0)$, we have $q_i \not\in J$. By condition $(vi)$, $T_J$ is a regular local ring, and so we have that $(T/(0)T)_J \cong T_J$ is a regular local ring. By Lemma \ref{Unknown 2.6}, $A$ is quasi-excellent. 
    %It follows that $T \otimes_A k((0))$ localized at any of its prime ideals is a regular local ring and so $T \otimes_A k((0))$ is a regular ring.  Therefore, $A$ is quasi-excellent.
\end{proof}

\begin{theorem} \label{glue excellent}
    Let $T$ be a complete local ring containing the rationals, and let $\Pi \cong \mathbb{Z}$ denote the prime subring of $T$. For each $i \in \mathbb{N}$, let $C_i$ be a nonempty countable set of nonmaximal pairwise incomparable prime ideals of $T$ and suppose that, if $i \neq j$, then either $C_i = C_j$ or no element of $C_i$ is contained in an element of $C_j$. Then, there exists an excellent local domain $A \subseteq T$ with $\widehat{A} \cong T$ and, for all $i \in \mathbb{N}$, there is a nonzero prime element $p_i$ of $A$, such that $C_i$ is exactly the set of maximal elements of the formal fiber of $A$ at $p_iA$ if and only if $T$ is equidimensional and there exists a set of nonzero elements $\mathfrak{q} = \{q_i\}_{i = 1}^{\infty}$ of $T$ satisfying the following conditions 
    \begin{enumerate}[(i)]
        
        \item For $i \in \mathbb{N}$ we have $q_i \in \bigcap_{Q \in C_i}Q$ and, if $C_j \neq C_i$ and $Q' \in C_j$, then $q_i \not\in Q'$,
        \item $P \cap \Pi[\mathfrak{q}]=(0)$ for all $P \in \Ass(T)$,
        \item If $i \in \mathbb{N}$ and $P' \in \Ass(T/q_iT)$, then $P' \subseteq Q$ for some $Q \in C_i$,
        \item If $i \in \mathbb{N}$ and $Q \in C_i$, then $F_{\Pi[\mathfrak{q}]}\cap Q \subseteq q_iT$, 
        \item If $Q \in \bigcup_{i \in \mathbb{N}}C_i$ and $q_i \in Q$ then $T_Q/q_iT_Q$ is a regular local ring, and
        \item If $P$ is a prime ideal of $T$ such that there is an $i \in \mathbb{N}$ with $P \subseteq Q$ for some $Q \in C_i$ and $q_i \not\in P$, then $T_P$ is a regular local ring. 
        \end{enumerate}
\end{theorem}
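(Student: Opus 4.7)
The plan is to deduce Theorem \ref{glue excellent} almost immediately from the quasi-excellent version (Theorem \ref{glue quasi-excellent}) together with the two equidimensionality theorems (Theorem \ref{Yu 2.7} and Theorem \ref{converse}). The only feature distinguishing excellent from quasi-excellent is universal catenarity, and for local domains this is controlled entirely by whether the completion is equidimensional.

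For the forward direction, suppose $A \subseteq T$ is an excellent local domain with $\widehat{A} \cong T$ and nonzero prime elements $p_i$ such that $C_i$ is exactly the set of maximal elements of the formal fiber of $A$ at $p_iA$. Since every excellent ring is quasi-excellent, Theorem \ref{glue quasi-excellent} immediately yields a set $\mathfrak{q} = \{q_i\}_{i=1}^{\infty}$ (namely $q_i = p_i$) satisfying conditions (i)--(vi). To obtain the additional equidimensionality of $T$, note that $A$, being excellent, is universally catenary. Since $A$ is a local domain, Theorem \ref{converse} then gives that $\widehat{A} = T$ is equidimensional.

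For the reverse direction, assume $T$ is equidimensional and $\mathfrak{q} = \{q_i\}_{i=1}^{\infty}$ satisfies (i)--(vi). Apply Theorem \ref{glue quasi-excellent} to produce a quasi-excellent local domain $A \subseteq T$ with $\widehat{A} \cong T$ such that, for each $i \in \mathbb{N}$, $q_i$ is a nonzero prime element of $A$ and $C_i$ is exactly the set of maximal elements of the formal fiber of $A$ at $q_iA$. Since $\widehat{A} = T$ is equidimensional, Theorem \ref{Yu 2.7} tells us that $A$ is universally catenary. A universally catenary quasi-excellent ring is excellent, so $A$ is the desired excellent local domain, with $p_i = q_i$ for each $i$.

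There is no substantive obstacle beyond invoking the right results in the right order; the bulk of the technical work has already been done in Section 2 and recycled in Theorem \ref{glue quasi-excellent}. The only thing to be careful about is ensuring that equidimensionality is stated as a hypothesis on $T$ (rather than derived), since without it one cannot promote the quasi-excellent $A$ produced by Theorem \ref{glue quasi-excellent} to an excellent one. The biconditional is therefore tight: equidimensionality of $T$ is exactly the extra condition needed on top of the quasi-excellent hypotheses, mirroring the equivalence given by Theorems \ref{Yu 2.7} and \ref{converse}.
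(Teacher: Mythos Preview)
Your proposal is correct and follows essentially the same approach as the paper: both directions reduce immediately to Theorem \ref{glue quasi-excellent}, with Theorem \ref{converse} supplying equidimensionality of $T$ in the forward direction and Theorem \ref{Yu 2.7} supplying universal catenarity of $A$ in the reverse direction.
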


%\begin{theorem} \label{glue excellent}
%    Let $(T,M)$ be a reduced complete local ring containing $\Q$ and let $\Phi$, $C_i$, $C$, and $q_i$ be as in Theorem \ref{big theorem}. Then, there exists an excellent local domain $A$ and distinct elements $q_i \in A$ for all $i \in \Phi$ such that $\widehat{A} \cong T$ and $q_iA$ is a prime ideal $A$ whose formal fiber is semi-local with maximal ideals the elements of $C_i$ for all $i \in \Phi$ if and only if:
%    \begin{enumerate}[(i)]
%        \item $T$ is equidimensional,
%        \item $T \otimes_A k(0)$, and $T \otimes_A k(q_iA)$ for all $i \in \Phi$ are regular,
%        \item There exist nonzero elements $q_i \in T$ such that $q_i \in \bigcap_{Q \in C_i}Q \setminus \bigcup_{Q' \in (C \setminus C_i)}Q'$ for all $i \in \Phi$,
%        \item For each $i \in \Phi$, if $P' \in \Ass(T/q_iT)$, then $P' \subseteq Q$ for some $Q \in C_i$, and
%        \item $F_{\Pi[\mathfrak{q}]}\cap Q \subseteq q_iT$ for all $Q \in C_i$ and all $i \in \Phi$.
%    \end{enumerate}
%\end{theorem}

\begin{proof}
    Suppose there exists an excellent local domain $A \subseteq T$ with $\widehat{A} \cong T$ and, for all $i \in \mathbb{N}$, there is a nonzero prime element $p_i$ of $A$, such that $C_i$ is exactly the set of maximal elements of the formal fiber of $A$ at $p_iA$. Since $A$ is excellent, it is quasi-excellent, and so by Theorem \ref{glue quasi-excellent} there exists a set of nonzero elements $\mathfrak{q} = \{q_i\}_{i = 1}^{\infty}$ of $T$ satisfying conditions $(i) - (vi)$. Since $A$ is universally catenary, we have by Theorem \ref{converse} that $T$ is equidimensional.
    
%    $T$ is the completion of a local excellent domain $A$ such that $q_i \in A$, and $q_iA$ is a prime ideal in $A$ whose formal fiber is semi-local with maximal ideals the elements of $C_i$ for all $i \in \Phi$. Then condition (i) follows from Theorem 9 in \cite{Susan}. By Theorem \ref{big theorem}, conditions (iii), (iv), and (v) must also hold. To see that the second condition holds, note that we have assumed $A$ is excellent. Therefore, $T \otimes_A k(P)$ must be regular for all $P \in \Spec(A)$

    Now, suppose $T$ is equidimensional and there exists a set of nonzero elements $\mathfrak{q} = \{q_i\}_{i = 1}^{\infty}$ of $T$ satisfying conditions $(i)-(vi)$. By Theorem \ref{glue quasi-excellent}, there exists a quasi-excellent local domain $A \subseteq T$ with $\widehat{A} \cong T$ and, for all $i \in \mathbb{N}$, there is a nonzero prime element $p_i$ of $A$, such that $C_i$ is exactly the set of maximal elements of the formal fiber of $A$ at $p_iA$. By Theorem \ref{Yu 2.7}, $A$ is universally catenary, and so $A$ is excellent.
    %
    %Consider the local quasi-excellent domain $A$ constructed in the proof of Theorem \ref{glue quasi-excellent}. Since $A$ is a local domain and we assumed that $\widehat{A} \cong T$ is equidimensional, it follows that $A$ is excellent by Theorem \ref{Yu 2.7}.
\end{proof}

Note that the conditions of Theorem \ref{glue excellent} are satisfied for both Example \ref{Example1} and Example \ref{Example2}, and so for those examples, $T$ contains an excellent local domain $A$ with $\widehat{A} \cong T$ and $A$ contains prime elements $p_i$ for every $i \in \mathbb{N}$ such that $C_i$ is exactly the set of maximal elements of the formal fiber of $A$ at $p_iA$.

%\flag{put in something about the other example?}

\section{Countable Excellent and Quasi-excellent Precompletions}

In this section we prove analogous results to Theorems \ref{glue quasi-excellent} and \ref{glue excellent}, but where the domain $A$ is required to be countable. 
 Remark \ref{remark} can be used to show that the precompletions constructed in the proofs of Theorems \ref{glue quasi-excellent} and \ref{glue excellent} are uncountable. In order to make our quasi-excellent and excellent precompletions countable, then, we need a different approach. The ideas in this section are inspired by results presented in \cite{Teresa}. 
 %Therefore, we must reprove many of the lemmas from \cite{Teresa}, but for $Ct\mathfrak{q}$-subrings. So, we now include some results from \cite{Heitman}, \cite{Teresa}, and \cite{Rotthaus} that are essential to the proofs in this section.
%
%% Completion proving machine 2
%
% The following section will not only be divided into proofs regarding excellent and quasi-excellent domains, but also proofs where the precompletion is uncountable and countable. Because we will be constructing countable precompletions, we will need the countable version of the Prime Avoidance Theorem.
%
% In order to prove our theorems, we must first reprove many of the results from \cite{Teresa}, but for $Ct\mathfrak{q}$-subrings. For this reason we now include a definition and some relevant lemmas from \cite{Teresa} that are relevant to our paper.
%
Given a complete local ring $T$, our strategy for constructing a countable precompletion of $T$ is to start with a countable $Ct\mathfrak{q}$-subring of $T$ given by Theorem \ref{big theorem countable}. We then inductively build a countable ascending chain of countable $Ct\mathfrak{q}$-subrings of $T$. Each $Ct\mathfrak{q}$-subring in the chain will have completion $T$. In addition, we adjoin generators of carefully chosen prime ideals of $T$ to each of our subrings. The union of this chain will be a countable quasi-excellent $Ct\mathfrak{q}$-subring of $T$ whose completion is $T$.
%The authors of \cite{Teresa} call these subrings $BR_0$-subrings of $T$.

Before we begin our construction, we state two preliminary results. We note that, if $R$ is a Noetherian ring, we use Sing$(R)$ to denote the set $\{P \in \Spec(R) \, | \, R_P \mbox{ is not a regular local ring}\}$, and if $I$ is an ideal of $R$, we use $V(I)$ to denote the set of prime ideals of $R$ that contain $I$.

\begin{lemma}[\cite{Rotthaus}, Corollary 1.6]\label{Rott 1.6}
    If $R$ is excellent, then Sing$(R)$ is closed in the Zariski topology, i.e., Sing$(R)=V(I)$ for some ideal $I$ of $R$.
\end{lemma}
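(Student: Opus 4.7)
The plan is to deduce this essentially directly from the definition of quasi-excellent that the paper has already stated, noting that excellent rings are by definition quasi-excellent. Recall from Section 4 that condition (2) in the definition of quasi-excellent says that $\mathrm{Reg}(B) \subseteq \Spec(B)$ is open for every finitely generated $R$-algebra $B$. The first step will be to specialize this condition to the case $B = R$, which is a finitely generated $R$-algebra (generated by the empty set, or equivalently by $1$). This immediately gives that $\mathrm{Reg}(R)$ is an open subset of $\Spec(R)$ in the Zariski topology.

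The second step is to observe that $\mathrm{Sing}(R)$ is by definition the complement of $\mathrm{Reg}(R)$ in $\Spec(R)$: a prime $P$ lies in $\mathrm{Sing}(R)$ if and only if $R_P$ is not regular, i.e., if and only if $P \notin \mathrm{Reg}(R)$. Therefore $\mathrm{Sing}(R)$ is a closed subset of $\Spec(R)$. Finally, every closed subset of $\Spec(R)$ has the form $V(I)$ for some ideal $I$ of $R$ (this is just the definition of the Zariski topology on $\Spec(R)$), so we obtain $\mathrm{Sing}(R) = V(I)$ for some ideal $I$, as desired.

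There is no real obstacle here: the content of the lemma is essentially a restatement of part of the definition of quasi-excellence, packaged in the language of the Zariski topology. The only thing to verify carefully is that taking $B = R$ is legitimate in condition (2), which it is since $R$ is trivially a finitely generated $R$-algebra via the identity map.
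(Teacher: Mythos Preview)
Your argument is correct. The paper does not give its own proof of this lemma; it simply cites it as \cite{Rotthaus}, Corollary 1.6, and moves on. Your deduction from condition (2) of the definition of quasi-excellent stated at the beginning of Section 4 is valid and is indeed the standard way to see this fact: take $B=R$ in that condition, so $\mathrm{Reg}(R)$ is open, hence its complement $\mathrm{Sing}(R)$ is closed and therefore of the form $V(I)$.
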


\begin{lemma}[\cite{Teresa}, Theorem 3.4] \label{Yu 3.4}
    Let $(T,M)$ be a complete local reduced ring and let $(R, R\cap M)$ be a countable local domain with $R\subseteq T$ and $\widehat{R} = T$. Then 
    \begin{align*}
        \Sigma = \bigcup_{P\in \Spec(R)} \set{Q\in \Spec{T}\given Q\in \Min (I) \text{ for $I$ where $Sing(T/PT) = V(I/PT)$}}
    \end{align*}
    is a countable set. Furthermore, for any prime ideal $Q$ in this set, $Q \not\subseteq P$ for all $P \in \Ass(T)$.
\end{lemma}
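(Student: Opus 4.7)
The plan is to prove the two assertions separately. First I will argue that $\Sigma$ is countable by bounding the index set and each term of the union. Because $R$ is countable and Noetherian (being a precompletion of $T$, it is local in the sense of the paper), every ideal of $R$ is finitely generated and hence determined by a finite subset of the countable set $R$; consequently $R$ has only countably many ideals, so $\Spec(R)$ is countable. Next, for a fixed $P \in \Spec(R)$, the ring $T/PT$ is a quotient of a complete local ring and is therefore excellent, so Lemma~\ref{Rott 1.6} gives an ideal $J$ of $T/PT$ with $\Sing(T/PT) = V(J)$; lifting to $T$, we get an ideal $I$ with $PT \subseteq I$ and $\Sing(T/PT) = V(I/PT)$. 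Since $T$ is Noetherian, $\Min(I)$ is a finite set. Thus $\Sigma$ is a countable union of finite sets, and is therefore countable.

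For the second assertion, fix $Q \in \Sigma$, so $Q \in \Min(I)$ for some $P \in \Spec(R)$ and some ideal $I \supseteq PT$ with $V(I/PT) = \Sing(T/PT)$. Suppose for contradiction that $Q \subseteq P'$ for some $P' \in \Ass(T)$. Since $T$ is reduced, $\Ass(T) = \Min(T)$, so $P'$ is a minimal prime of $T$. Then $T_{P'}$ is a local Artinian reduced ring, hence a field, so in particular $P'T_{P'} = 0$. Because $PT \subseteq Q \subseteq P'$, the ideal $P'/PT$ is a prime of $T/PT$, and its localization is
\[
(T/PT)_{P'/PT} \;\cong\; T_{P'}/PT_{P'},
\]
which is the quotient of the field $T_{P'}$ by the ideal $PT_{P'} \subseteq P'T_{P'} = 0$, and hence is itself a field.

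A field is a regular local ring, so $P'/PT \notin \Sing(T/PT) = V(I/PT)$, which forces $I \not\subseteq P'$. This contradicts the inclusions $I \subseteq Q \subseteq P'$, completing the argument. The main step that requires care is the reduction to a field via localization at a minimal prime: one must verify that $PT$ is killed in $T_{P'}$, which uses both the reducedness of $T$ (so that $T_{P'}$ is a field) and the fact that $PT \subseteq P'$ (which comes from the chain $PT \subseteq I \subseteq Q \subseteq P'$). Everything else is essentially bookkeeping with Noetherianity, excellence of complete local rings, and Lemma~\ref{Rott 1.6}.
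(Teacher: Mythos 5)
The paper states this lemma as a citation to \cite{Teresa}, Theorem 3.4, and does not give a proof of its own, so there is nothing internal to compare against. Judged on its own merits, your argument is correct and complete.

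Your first half (countability of $\Sigma$) is standard and sound: $\Spec(R)$ is countable because $R$ is countable Noetherian, $T/PT$ is a complete local ring and therefore excellent, Lemma~\ref{Rott 1.6} gives the defining ideal $I$, and $\Min(I)$ is finite. Your second half is a clean localization argument: from $PT \subseteq I \subseteq Q \subseteq P'$ with $P' \in \Ass(T) = \Min(T)$ (reducedness of $T$), you get that $T_{P'}$ is a reduced Artinian local ring, hence a field with maximal ideal $P'T_{P'} = 0$; therefore $PT_{P'} = 0$ and $(T/PT)_{P'/PT} \cong T_{P'}$ is a field, hence regular. This forces $P'/PT \notin \Sing(T/PT) = V(I/PT)$, contradicting $I \subseteq P'$. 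Every step checks out, including the identification $\Ass(T) = \Min(T)$ for reduced Noetherian rings and the isomorphism of the localized quotient. This is exactly the kind of self-contained argument one would expect to find in \cite{Teresa}; you lose nothing by not having seen the original.
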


%\subsection{The Construction}\label{quasi+excellent section}

%We now prove the countable analogs for Theorem \ref{glue quasi-excellent} and Theorem \ref{glue excellent}.

We are ready to begin our construction. We use the next lemma to adjoin generators of specific prime ideals of $T$ to a $Ct\mathfrak{q}$-subring of $T$ to obtain another $Ct\mathfrak{q}$-subring of $T$. The result is crucial in showing that our final ring is quasi-excellent.

\begin{lemma} \label{adjoin bad qs}
    Let $(T,M)$, $C_i$, $q_i$, and $\mathfrak{q}$ for $i \in \mathbb{N}$ be as in Definition \ref{Definition 6.1}. Moreover, suppose that for every $i \in \mathbb{N}$ and for every $P \in \Ass (T/q_iT)$, we have that $P \subseteq Q$ for some $Q \in C_i$.
    %Let $(R_0, R_0 \cap M)$ be a local subring of $T$ such that $\widehat{R_0} \cong T$. 
    Suppose $(R, R \cap M)$ is a $Ct\mathfrak{q}$-subring of $T$ such that, for all $i \in \mathbb{N}$, $q_iT \cap R = q_iR$. Let $J \in \Spec(T)$ such that $J \nsubseteq P$ for all $P \in \bigcup_{i \in \mathbb{N}}C_i$. Then there exists a $Ct\mathfrak{q}$-subring of $T$, $(R',R' \cap M)$, such that $R \subseteq R'$, $|R'| = |R|$, and $R'$ contains a generating set for $J$.
\end{lemma}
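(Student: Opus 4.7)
\textbf{Proof proposal for Lemma \ref{adjoin bad qs}.} The plan is to write $J$ using a chosen finite generating set $a_1,\dots,a_n$ (possible since $T$ is Noetherian) and then adjoin, one at a time, slight perturbations $\tilde a_k$ of the $a_k$'s to $R$, chosen so that the perturbations preserve the $Ct\mathfrak{q}$-subring properties and so that the $\tilde a_k$'s still generate $J$ as an ideal of $T$. The construction produces a chain $R = R_0 \subseteq R_1 \subseteq \cdots \subseteq R_n = R'$ of $Ct\mathfrak{q}$-subrings of $T$, each of cardinality $|R|$, with $q_iT \cap R_k = q_iR_k$ for all $i \in \mathbb{N}$, and with $\tilde a_j \in R_k$ for every $j \le k$.

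The crucial choice is to take $\tilde a_k = a_k + y_k$ with $y_k \in JM$ (not merely in $J$); this is what makes Nakayama's lemma available at the end. First note that $JM \not\subseteq Q$ for any $Q \in \bigcup_{i \in \mathbb{N}} C_i$: since $Q$ is prime, $JM \subseteq Q$ would force $J \subseteq Q$ or $M \subseteq Q$, and both are ruled out (the first by hypothesis, the second because elements of $C_i$ are nonmaximal). At step $k$, for each $Q \in \bigcup_{i \in \mathbb{N}} C_i$ let $D_{(Q)}$ be a full set of coset representatives of those $t + Q \in T/Q$ making $(a_k + t) + Q$ algebraic over $R_{k-1}/(R_{k-1} \cap Q)$. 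Setting $D = \bigcup_{Q} D_{(Q)}$, we have $|D| < |T|$ since $\bigcup_{i \in \mathbb{N}} C_i$ is countable and $|R_{k-1}| < |T|$. Applying Proposition \ref{stronger coset avoidance} with $I = JM$ and $C = \bigcup_{i \in \mathbb{N}} C_i$ produces $y_k \in JM$ outside $\bigcup\{r + Q \mid r \in D,\, Q \in C\}$, so that $\tilde a_k + Q$ is transcendental over $R_{k-1}/(R_{k-1} \cap Q)$ for every such $Q$.

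Lemma \ref{adjoining transcendentals} then shows that $R_{k-1}[\tilde a_k]_{(R_{k-1}[\tilde a_k] \cap M)}$ is a $Ct\mathfrak{q}$-subring of $T$, and an application of Lemma \ref{lemma 6.5} yields $R_k$, still a $Ct\mathfrak{q}$-subring with $|R_k| = |R_{k-1}|$ and $q_iT \cap R_k = q_iR_k$ for all $i$. Iterating $n$ times produces $R' := R_n$, which contains $\tilde a_1, \ldots, \tilde a_n$ and satisfies all the requested properties except, perhaps, the last.

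It remains to argue that $(\tilde a_1, \ldots, \tilde a_n)T = J$. Each $\tilde a_k = a_k + y_k$ lies in $J$, so $(\tilde a_1, \ldots, \tilde a_n)T \subseteq J$. Conversely, $a_k = \tilde a_k - y_k$ with $y_k \in JM$, so $a_k \in (\tilde a_1, \ldots, \tilde a_n)T + JM$, whence $J \subseteq (\tilde a_1, \ldots, \tilde a_n)T + JM$. Since $J$ is finitely generated and $M$ sits in the Jacobson radical of the local ring $T$, Nakayama's lemma gives $J = (\tilde a_1, \ldots, \tilde a_n)T$, as desired. The main obstacle is the Nakayama step: a naive application of Lemma \ref{Lemma 6.11}, which only ensures $\tilde a_k \equiv a_k \pmod{J}$, produces perturbations $y_k$ that are merely in $J$ and gives no handle on invertibility of the resulting change of generators; strengthening to $y_k \in JM$ via Proposition \ref{stronger coset avoidance} is what makes the argument go through.
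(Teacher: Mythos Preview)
Your proof is correct and follows essentially the same route as the paper: perturb a finite generating set of $J$ by elements of $MJ$ chosen via Proposition \ref{stronger coset avoidance} so that each perturbed generator is transcendental modulo every $Q \in \bigcup_i C_i$, adjoin them one at a time using Lemma \ref{adjoining transcendentals}, and recover $J$ from the perturbed generators by Nakayama. Your version differs only cosmetically---you select $y_k \in JM$ directly, whereas the paper fixes one $y \in J \setminus \bigcup Q$ and then chooses multipliers $\alpha_k \in M$---and is in fact slightly more careful than the paper in interleaving Lemma \ref{lemma 6.5} at each step to maintain $q_iT \cap R_k = q_iR_k$, which the next application of Lemma \ref{adjoining transcendentals} requires.
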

\begin{proof}
    Let $ J = (x_1,\dots,x_n)$. We inductively define a chain of $Ct\mathfrak{q}$-subrings of $T$, $R=R_1 \subseteq R_2 \subseteq \dots \subseteq R_{n+1}$ such that $R_{n+1}$ contains a generating set for $J$. To construct $R_{i+1}$ from $R_i$, we show that there exists an element $\Tilde{x}_1$ of $T$ so that $R_{i+1} = R_i[\Tilde{x}_i]_{(R_i[\Tilde{x}_i]\cap M)}$ is a $Ct\mathfrak{q}$-subring of $T$ and such that we can replace $x_i$ with $\Tilde{x}_i$ in the generating set of $J$. By Proposition \ref{stronger coset avoidance}, there exists $y \in J$ such that $y \notin P$ for all $P \in \bigcup_{i \in \mathbb{N}}C_i$.

    We construct $R_2$ by adjoining a carefully chosen element of $T$ to  $R_1=R$. We find $\alpha_1 \in M$ so that $\Tilde{x}_1=x_1+\alpha_1y$ satisfies $\Tilde{x}_1+P \in T/P$ is transcendental over $R_1/(R_1 \cap P) = R/(R \cap P) \cong R$ for all $P \in \bigcup_{i \in \mathbb{N}}C_i$. To do this, first let $P \in \bigcup_{i \in \mathbb{N}}C_i$ and let $D_{(P)}$ be a full set of coset representatives of the cosets $t+P \in T/P$ that make $x_1+ty+P$ algebraic over $R_1/(R_1 \cap P)$. Let $D_1=\bigcup_{P \in \bigcup_{i \in \mathbb{N}}C_i}D_{(P)}$ and note that $|D_1| \leq |R|$. It follows that $|D_1| < |T|$. Applying Proposition \ref{stronger coset avoidance} with $I = M$, we have that 
    \begin{align*}
        M \nsubseteq \bigcup\{r+P\given r \in D_1, P \in \bigcup_{i \in \mathbb{N}}C_i\},
    \end{align*}
    so there exists $\alpha_1 \in M$ such that $x_1 + \alpha_1y + P$ is transcendental over $R_1/(R_1 \cap P)$ for every $P \in \bigcup_{i \in \mathbb{N}}C_i$. Let $\Tilde{x}_1 = x_1 + \alpha_1y$. By Lemma \ref{adjoining transcendentals}, we have that $R_2 = R_1[\Tilde{x}_1]_{(R_1[\Tilde{x}_1]\cap M)}$ is a $Ct\mathfrak{q}$-subring of $T$.

    We now show that $J = (\Tilde{x}_1, x_2,\dots,x_n)$. Since $\Tilde{x}_1-x_1 \in MJ$, it follows that $(\Tilde{x}_1, x_2, \dots, x_n)+MJ=J$, and thus, by Nakayama's Lemma, $(\Tilde{x}_1, x_2, \dots, x_n)=J$.

    To construct $R_3$, use the above argument with $R_1$ replaced by $R_2$ to find $\alpha_2 \in M$ such that $\Tilde{x}_2 = x_2 + \alpha_2y$ satisfies 
   % let $D_2 = \bigcup_{P \in \Ass(T) \cup C}D_{(P)}$, where $D_{(P)}$ is a full coset representatives of the elements $t+P$ of $T/P$ that make $x_2+ty+P \in T/P$     algebraic over $R_2/(R_2 \cap P)$ for every $P \in \Ass(T) \cup C$. Let $\Tilde{x_2} = x_2 + \alpha_2y$. Then 
    $R_3 = R_2[\Tilde{x}_2]_{(R_2[\Tilde{x}_2] \cap M)}$ is a $Ct\mathfrak{q}$-subring of $T$. We have that $J = (\Tilde{x}_1, \Tilde{x}_2, x_3, \dots, x_n)$ by an argument similar to the way we argued that $J = (\Tilde{x}_1, x_2, \dots, x_n)$.

    Repeat the above process for each $i = 4,\dots,n+1$ to obtain a chain of $Ct\mathfrak{q}$-subrings of $T$, $R_1 \subseteq \dots \subseteq R_{n+1}$ and $J = (\Tilde{x}_1, \Tilde{x}_2, \dots, \Tilde{x}_n)$. By construction, $\Tilde{x}_i \in R_{i+1}$, and so $R_{n+1}$ contains a generating set for $J$. Note that $|R_{n + 1}| = |R|$. Thus, $R' = R_{n+1}$ is our desired $Ct\mathfrak{q}$-subring of $T$.
\end{proof}

We are now ready to state and prove the main theorems for this section.

\begin{theorem}\label{glue countable quasi}
Let $(T,M)$ be a complete local ring containing the rationals and let $\Pi \cong \mathbb{Z}$ denote the prime subring of $T$. For each $i \in \mathbb{N}$, let $C_i$ be a nonempty countable set of nonmaximal pairwise incomparable prime ideals of $T$ and suppose that, if $i \neq j$, then either $C_i = C_j$ or no element of $C_i$ is contained in an element of $C_j$. Then, there exists a countable quasi-excellent local domain $A \subseteq T$ with $\widehat{A} \cong T$ and, for all $i \in \mathbb{N}$ there is a nonzero prime element $p_i$ of $A$ such that all elements of $C_i$ are in the formal fiber of $A$ at $p_iA$ if and only if there exists a set of nonzero elements $\mathfrak{q} = \{q_i\}_{i = 1}^{\infty}$ of $T$ satisfying the following conditions
    \begin{enumerate}[(i)]
        \item For $i \in \mathbb{N}$ we have $q_i \in \bigcap_{Q \in C_i}Q$ and, if $q_jT \neq q_iT$ and $Q' \in C_j$, then $q_i \not\in Q'$,
        \item $P \cap \Pi[\mathfrak{q}]=(0)$ for all $P \in \Ass(T)$,
        \item If $i \in \mathbb{N}$ and $Q \in C_i$ or $Q \in \Ass(T/q_iT)$, then $F_{\Pi[\mathfrak{q}]}\cap Q \subseteq q_iT$, 
        \item $T/M$ is countable,
        \item If $q_iT \neq q_jT$ and $P_i \in \Ass(T/q_iT)$, $P_j \in \Ass(T/q_jT)$, then $P_i \not\subseteq P_j$, and 
        \item Suppose $\mathcal{C} = \{Q \in \Spec(T) \, | \, Q \in \bigcup_{i \in \mathbb{N}}C_i \mbox{ or } Q \in \Ass(T/q_iT) \mbox{ for some }i \in \mathbb{N} \}$ and $J \in \Spec(T)$ satisfies $J \subseteq Q$ for some $Q \in \mathcal{C}$. If $q_i \in J$ for some $i \in \mathbb{N}$ then $(T/q_iT)_J$ is a regular local ring, while if $q_i \not\in J$ for all $i \in \mathbb{N}$, then $T_J$ is a regular local ring.
        %Q \in \bigcup_{i \in \mathbb{N}}C_i$ and $q_i \in Q$ then $T_Q/q_iT_Q$ is a regular local ring, and
     %   \item If $J$ is a prime ideal of $T$ such that there is an $i \in \mathbb{N}$ with $J \subseteq Q$ for some $Q \in C_i$ and $q_i \not\in J$, or $J \subseteq J'$ for some $J' \in \Ass(T/q_iT)$ and $q_i \not\in J$ then $T_J$ is a regular local ring. 
    \end{enumerate}
\end{theorem}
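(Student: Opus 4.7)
The plan is to prove each direction separately, with the reverse direction requiring the bulk of the work.

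For the forward direction ($\Rightarrow$), suppose such a countable quasi-excellent domain $A$ exists with prime elements $p_i$. Setting $q_i = p_i$, conditions (i)--(v) follow as in the proof of Theorem \ref{big theorem countable}. For condition (vi), let $J \in \Spec(T)$ with $J \subseteq Q$ for some $Q \in \mathcal{C}$. The arguments of Theorem \ref{big theorem countable} yield $Q \cap A = q_iA$, whether $Q \in C_i$ or $Q \in \Ass(T/q_iT)$. If $q_i \in J$, then $q_iA \subseteq J \cap A \subseteq Q \cap A = q_iA$ places $J$ in the formal fiber of $A$ at $q_iA$, and Lemma \ref{Unknown 2.6} then gives that $(T/q_iT)_J$ is regular. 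Otherwise $J \cap A = (0)$ and $J$ lies in the formal fiber at $(0)$; Lemma \ref{Unknown 2.6} gives that $T_J$ is regular.

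For the reverse direction ($\Leftarrow$), the plan is to fuse the countable construction of Lemma \ref{Lemma 6.12 countable} with the quasi-excellence strategy of Section 4. As in the proof of Theorem \ref{big theorem countable}, first enlarge each $C_i$ to a set $C''_i$ so that the hypotheses of Definition \ref{Definition 6.1} hold and every $P \in \Ass(T/q_iT)$ lies in some element of $\bigcup_i C''_i$. We will construct $A$ as the union of an ascending chain $R_0 \subseteq R_1 \subseteq \cdots$ of countable $Ct\mathfrak{q}$-subrings of $T$, each with completion $T$. Take $R_0$ from Lemma \ref{Lemma 6.12 countable}. Inductively, given $R_n$ with $\widehat{R_n} \cong T$, apply Lemma \ref{Yu 3.4} to $R_n$ to obtain the countable set $\Sigma_n \subseteq \Spec(T)$ of primes arising from the singular loci of $T/PT$ for $P \in \Spec(R_n)$. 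For each $J \in \Sigma_n$ with $J \nsubseteq Q'$ for all $Q' \in \bigcup_i C''_i$, use Lemma \ref{adjoin bad qs} to adjoin a generating set for $J$. Then reapply the construction from Lemma \ref{Lemma 6.12 countable} to pass to a countable $Ct\mathfrak{q}$-subring $R_{n+1}$ with $\widehat{R_{n+1}} \cong T$ containing the adjoined generators. Since each operation preserves countability, $A = \bigcup_n R_n$ is countable, $\widehat{A} \cong T$, and each $q_iA$ is prime with $C_i$ in its formal fiber.

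The main obstacle is verifying quasi-excellence of $A$ via Lemma \ref{Unknown 2.6}. Given $P \in \Spec(A)$ and $Q \in \Spec(T)$ with $Q \cap A = P$, one must show $(T/PT)_Q$ is regular. If not, $Q$ contains some $J \in \Min(I)$ where $\mbox{Sing}(T/PT) = V(I/PT)$. Since $P \cap R_n \in \Spec(R_n)$ for some (in fact every sufficiently large) $n$, a careful application of Lemma \ref{Yu 3.4} places $J \in \Sigma_n$. Two subcases arise. In the case $J \subseteq Q'$ for some $Q' \in C''_i$, condition (vi) directly gives that $(T/q_iT)_J$ is regular (if $q_i \in J$) or $T_J$ is regular (if $q_i \notin J$), and localizing to $Q$ completes the argument. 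The delicate subcase is $J \nsubseteq Q'$ for all $Q' \in \bigcup_i C''_i$: here generators of $J$ were adjoined at stage $n+1$ via Lemma \ref{adjoin bad qs}, which produces elements $\tilde{x}_k = x_k + \alpha_k y \in J \cap A$ engineered to be transcendental modulo each $Q' \in \bigcup_i C''_i$. One must then exploit this transcendence, together with further iteration through the later stages, to produce an element of $J \cap A$ lying outside $P$, contradicting $J \cap A \subseteq Q \cap A = P$. The heart of the proof lies in organizing this transcendence-and-avoidance argument so that it persists in the limit ring $A$.
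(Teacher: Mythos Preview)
Your construction of $A$ and the forward direction match the paper, but your endgame for quasi-excellence in the ``delicate subcase'' ($J \not\subseteq Q'$ for all $Q' \in \bigcup_i C''_i$) is headed the wrong way. You want to produce an element of $J \cap A$ lying outside $P$, but no such element exists: since $PT \subseteq I \subseteq J \subseteq Q$ and $Q \cap A = P$ (with $PT \cap A = P$ from the closing-up property of $A$), one has $J \cap A = P$ exactly. The paper exploits this equality rather than seeking to violate it. Because the adjoined generators $\tilde{x}_1, \ldots, \tilde{x}_n$ of $J$ lie in $A$, they lie in $J \cap A = P$, which forces $J = (\tilde{x}_1, \ldots, \tilde{x}_n)T \subseteq PT$; combined with $PT \subseteq J$ this gives $J = PT$, and then $(T/PT)_J = (T/J)_J$ is a field --- contradicting that $J$ lies in $\text{Sing}(T/PT)$. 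The transcendence built into Lemma \ref{adjoin bad qs} plays no role at this stage; its only purpose is to keep the intermediate rings $Ct\mathfrak{q}$-subrings during the construction.

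Two smaller gaps: Lemma \ref{Yu 3.4} requires $T$ to be reduced, which the paper deduces from condition (vi) by showing $T_P$ is regular for each $P \in \Ass(T)$; you should include this step. And in your first subcase, condition (vi) yields regularity of $(T/q_iT)_J$ or $T_J$, but you need $(T/PT)_J$ regular; the bridge is that $P = J \cap A \subseteq Q' \cap A = q_iA$ forces $P \in \{(0), q_iA\}$, hence $PT \in \{(0), q_iT\}$. The phrase ``localizing to $Q$ completes the argument'' is spurious --- the contradiction is already obtained at $J$.
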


\begin{proof}
Suppose there exists a countable quasi-excellent local domain $A \subseteq T$ with $\widehat{A} \cong T$ and, for all $i \in \mathbb{N}$ there is a nonzero prime element $p_i$ of $A$ such that all elements of $C_i$ are in the formal fiber of $A$ at $p_iA$. Let $q_i = p_i$ for every $i \in \mathbb{N}$. By Theorem \ref{big theorem countable}, the first five conditions hold. Now let $J$ be a prime ideal of $T$ and suppose $J \subseteq Q$ for some $Q \in \mathcal{C}$. Then $A \cap J \subseteq A \cap Q = q_iA$ for some $i \in \mathbb{N}$. If $q_i \in J$ then $A \cap J  = q_iA$ and so $J$ is in the formal fiber of $A$ at $q_iA$. By Lemma \ref{Unknown 2.6}, the ring $(T/q_iT)_J$ is a regular local ring.  Now suppose $q_i \not\in J$ for all $i \in \mathbb{N}$.  Then we have $A \cap J =(0)$. Hence $J$ is in the formal fiber of $A$ at $(0)$. By Lemma \ref{Unknown 2.6} the ring $(T/(0)T)_J \cong T_J$ is a regular local ring.

%    Suppose there exists a countable quasi-excellent local domain $S$ such that $\widehat{S} = T$, and for all $i \in \Phi$, we have that $q_i \in S$ and $q_iS$ is a prime ideal in $S$ whose formal fiber contains the elements of $C_i$. 
%    Then, by Theorem \ref{our countable quasi}, $T$ must satisfy condition (i) and (ii). Theorem \ref{big theorem countable} gives us conditions (iii) and (iv). We note that $S$ is a quasi-excellent local domain, meaning that for all $P \in \Spec(S)$, $T \otimes_S k(P)$ is regular. This gives us condition (v).
    
%    Now suppose $T$ satisfies conditions (i)-(v). If $\dim(T)=0$, then $\bigcap_{P \in \Spec(T)}P = \sqrt{(0)} = (0)$ since the nilradical of a reduced ring is the $(0)$ ideal. But, there is only one prime ideal in a local ring of dimension 0: the maximal ideal. Thus, $M = (0)$ and $T$ is a field; then, $T/M = T/(0) = T$ is countable. In addition, $T$ itself is excellent, since it is a complete local ring. Thus, $T$ is a countable excellent local domain, and is its own completion.

%    Assume $\dim(T) \geq 1$. By Theorem \ref{big theorem countable}, $T$ is the completion of a countable local domain $R_0$ such that $q_i \in R_0$, and for all $i \in \Phi$, $q_i \in R_0$ and $q_iS$ is a prime ideal in $R_0$ whose formal fiber contains the elements of $C_i$.

Now suppose conditions $(i) - (vi)$ hold. By Theorem \ref{big theorem countable}, there is a countable local domain $R_0 \subseteq T$ with $\widehat{R}_0 \cong T$ and, for all $i \in \mathbb{N}$, there is a nonzero prime element $p_i$ of $A$ such that all elements of $C_i$ are in the formal fiber of $A$ at $p_iA$. In fact, by the proof of Theorem \ref{big theorem countable}, we can choose $p_i = q_i$ for every $i \in \mathbb{N}$. 

 For every $i \in \mathbb{N}$, let $X_i$ be the set of maximal elements of $\Ass(T/q_iT)$. Define $$C'_i = C_i \cup \{Q \in C_j \, | \, q_jT = q_iT \}$$ and define $$C''_i = C'_i \cup \{P \in X_i \, | \, P \not\subseteq Q \mbox{ for all } Q \in C'_i\}.$$ Then $T$, $C''_i$, $q_i$, and $\mathfrak{q}$ for $i \in \mathbb{N}$ satisfies the conditions in Definition \ref{Definition 6.1}. By the definition of $C''_i$, we have that if $P \in \Ass(T/q_iT)$, then $P \subseteq Q$ for some $Q \in C''_i.$ Suppose $i \in \mathbb{N}$ and $Q \in C''_i$. If $x \in F_{R_0} \cap Q$ then $r = xs$ for some $r,s \in R_0$. So $r \in sT \cap R_0 = sR_0$. Since $s$ is not a zerodivisor in $T$ we have that $x \in R_0$. Thus $x \in R_0 \cap Q = q_iR_0 \subseteq q_iT$. It follows that $R_0$ is a $Ct\mathfrak{q}$-subring of $T$. We note that in this case, and for the rest of the proof, when we say that a ring is a $Ct\mathfrak{q}$-subring of $T$, we mean with respect to the $C''_i$'s.

 We now show that $T$ is reduced. Let $P \in \Ass(T)$.  Then there is an $i \in \mathbb{N}$ such that $P \subseteq Q$ for some $Q \in C''_i$. (To see this, recall the second paragraph of the proof of Lemma \ref{adjoining transcendentals}). Since $q_i$ is not a zerodivisor, $q_i \not\in P$. By condition $(vi)$, $T_P$ is a regular local ring. As a consequence, $T$ satisfies Serre's $(R_0)$ condition. In addition, if $P \in \Ass(T)$ then $PT_P \in \Ass(T_P)$. If ht$P > 0$, then $T_P$ is a regular local ring of dimension at least one and depth zero, a contradiction. Thus, $T$ satisfies Serre's $(S_1)$ condition and it follows that $T$ is reduced.
    
    We define a countable ascending chain of countable $Ct\mathfrak{q}$-subrings of $T$ recursively, starting with $R_0$. For each $Ct\mathfrak{q}$-subring in the chain we ensure its completion is $T$. Let $j \geq 1$ and assume we have defined $R_j$ so that it is a countable $Ct\mathfrak{q}$-subring of $T$ with $\widehat{R_j} \cong T$. We now define $R_{j + 1}$. 
    
    %$S_i$, we ensure that it contains $R_0$ and that it satisfies the criteria of Lemma \ref{bad qs lemma}; i.e. that $(S_i, S_i \cap M)$ is a countable local domain with $S_i \subseteq T$ and $\widehat{S_i} = T$. Suppose that $(S_i, S_i \cap M)$ is a countable local domain satisfying such properties. We construct $S_{i+1}$ to satisfy these as well.

    By Lemma \ref{Yu 3.4}, the set
    \begin{align*}
    \Sigma = 
        \bigcup_{P \in \Spec(R_j)}\{J \,| \, J \in \Min(I) \ \text{for} \ I \ \text{where Sing}(T/PT)=V(I/PT)\}
    \end{align*}
    is countable. Define $$\Omega = \{J \in \Sigma \, | \, J \not\subseteq Q \mbox{ for all } Q \in \bigcup_{i \in \mathbb{N}}C''_i\},$$ and note that $\Omega$ is countable. Enumerate $\Omega$, $(J_k)_{k \in \mathbb{N}}$. We recursively define a countable ascending chain of countable $Ct\mathfrak{q}$-subrings of $T$, $S_0 \subseteq S_1 \subseteq \cdots$. Let $S_0 = R_j$. For $k \in \mathbb{N}$, we ensure that $S_{k + 1}$ contains a generating set for all ideals in the set $\{J_1, J_2, \ldots , J_k\}$. Assume that $k \geq 0$ and $S_k$ has been defined so that it is a countable $Ct\mathfrak{q}$-subring of $T$, $q_iT \cap S_k = q_iS_k$ for all $i \in \mathbb{N}$, and $S_k$ contains a generating set for all ideals in the set $\{J_1, \ldots ,J_{k - 1}\}$. Let $S'_{k + 1}$ be the countable $Ct\mathfrak{q}$-subring of $T$ obtained from Lemma \ref{adjoin bad qs} so that $S_k \subseteq S'_{k + 1}$ and $S'_{k + 1}$ contains a generating set for $J_k$. Define $S_{k + 1}$ to be the  countable $Ct\mathfrak{q}$-subring of $T$ obtained from Lemma \ref{lemma 6.5} so that $S'_{k + 1} \subseteq S_{k + 1}$, and $q_iT \cap S_{k + 1} = q_iS_{k + 1}$ for all $i \in \mathbb{N}$. Define $R'_{j + 1} = \bigcup_{k = 1}^{\infty}S_k$. By Lemma \ref{unioning lemma}, $R'_{j + 1}$ is a countable $Ct\mathfrak{q}$-subring of $T$, and by construction, $R'_{j + 1}$ contains a generating set for every element of $\Omega$. For every $i \in \mathbb{N}$ and for every $k \in \mathbb{N}$ we have that $q_iT \cap S_k = q_iS_k$, and it follows that $q_iT \cap R'_{j + 1} = q_iR'_{j + 1}$ for every $i \in \mathbb{N}$. Now use Lemma \ref{Lemma 6.11} to obtain a countable $Ct\mathfrak{q}$-subring of $T$, $R_{j + 1}$ such that $R'_{j + 1} \subseteq R_{j + 1}$ and $IT \cap R_{j + 1} = IR_{j + 1}$ for every finitely generated ideal $I$ of $R_{j + 1}$. Since $R_{j + 1}$ contains $R_0$ and the map $R_0 \longrightarrow T/M^2$ is onto, we have the the map $R_{j + 1} \longrightarrow T/M^2$ is onto.  By Proposition \ref{completion proving machine}, $\widehat{R_{j + 1}} \cong T$.

    Define $A = \bigcup_{j=0}^{\infty}R_j$. Then $A$ is a countable $Ct\mathfrak{q}$-subring of $T$. Since the completion of $R_j$ is $T$ for all $j \in \mathbb{N}$, we have that $IT \cap R_j = I$ for every finitely generated ideal $I$ of $R_j$. It follows (see the proof of Lemma \ref{Lemma 6.12}) that $IT \cap A = I$ for every finitely generated ideal $I$ of $A$. In addtion, we have that the map $A \longrightarrow T/M^2$ is onto.  By Proposition \ref{completion proving machine}, $A$ is Noetherian and $\widehat{A} \cong T$. Since $A$ is a $Ct\mathfrak{q}$-subring of $T$, it is a domain and, if $i \in \mathbb{N}$ and $Q \in C''_i$ then by Lemma \ref{Q equal q_i}, $Q \cap A = q_iA$ and so all elements of $C''_i$ are in the formal fiber of $A$ at $q_iA$.  It follows that all elements of $C_i$ are in the formal fiber of $A$ at $q_iA$.
    
%    We show that $(S, S \cap M)$ is a countable quasi-excellent local domain such that $R_0 \subseteq S \subseteq T$ and $\widehat{S}=T$. First, since each $S_i$ is a $BR_0Ct\mathfrak{q}$-subring of $T$, we have that $S$ is a $BR_0Ct\mathfrak{q}$-subring of $T$ as well, so $(S, S \cap M)$ is a countable quasi-local domain with $S \subseteq T$. Note that for every $i$, since $\widehat{S_i}=T$, if $I$ is a finitely generated ideal of $S_i$, then $IT \cap S_i = IS_i$. We show that this holds for $S$ as well.

%    Suppose that $I$ is a finitely generated ideal of $S$. We clearly have that $IS \subseteq IT \cap S$, so we will show that $IT \cap S \subseteq IS$. Since $I$ is finitely generated, $I = (a_1,\dots,a_m)S$ for $a_1 \in S$. Let $c \in IT \cap S$. Choose $\ell$ such that $a_i, c \in S_\ell$. Then,
%    \begin{align*}
%        c \in (a_1,\dots,a_m)T \cap S_\ell = (a_1,\dots,a_m)S_\ell \subseteq IS.
%    \end{align*}
%    Thus, $IT \cap S \subseteq IS$. Since $R_0 \subseteq S$, by Corollary \ref{Yu 2.5}, $S$ is Noetherian and has completion $T$.

It remains to show that $A$ is quasi-excellent. To do this, we use Lemma \ref{Unknown 2.6}. Let $Q' \in \Spec(T)$ and let $P = Q' \cap A$. Suppose that $(T/PT)_{Q'}$ is not a regular local ring. Then $Q'/PT \in \mbox{Sing}(T/PT) = V(I/PT)$ for some ideal $I$ of $T$. Thus there is a prime ideal $J$ of $T$ that is minimal over $I$ and $J \subseteq Q'$. Then $J/PT \in V(I/PT)$ and so $(T/PT)_J$ is not a regular local ring. Note that $A \cap J = P$. Suppose that $J \subseteq Q$ for some $Q \in \bigcup_{i \in \mathbb{N}}C''_i$. Then $P = A \cap J \subseteq A \cap Q$ and so $P = (0)$ or $P = q_iA$ for some $i \in \mathbb{N}$. If $P = (0)$, then $q_i \not\in J$ for all $i \in \mathbb{N}$ and so by condition $(vi)$, $T_J \cong (T/PT)_J$ is a regular local ring, a contradiction. If $P = q_iA$ for some $i \in \mathbb{N}$, then by condition $(vi)$, $(T/q_iT)_J \cong (T/PT)_J$ is a regular local ring, also a contradiction.  It follows that $J \not\subseteq Q$ for all $Q \in \bigcup_{i \in \mathbb{N}}C''_i$. 

Let $P = (a_1,\ldots, a_m),$ and choose $j \in \mathbb{N}$ so that $a_k \in R_j$ for all $1\leq k \leq m$. Let $P_j = (a_1, \ldots,a_m)R_j$. Then $T/PT = T/P_jT$ and so by construction, $R_{j + 1}$ contains a generating set for $J$. It follows that $A$ contains a generating set for $J$. Therefore, $(T/PT)_J \cong (T/(A \cap J)T)_J \cong (T/JT)_J$, which is a field. This contradicts that $(T/PT)_J$ is not a regular local ring. By Theorem \ref{Unknown 2.6}, it follows that $A$ is quasi-excellent.
\end{proof}

\begin{theorem}\label{glue countable excellent}
Let $(T,M)$ be a complete local ring containing the rationals and let $\Pi \cong \mathbb{Z}$ denote the prime subring of $T$. For each $i \in \mathbb{N}$, let $C_i$ be a nonempty countable set of nonmaximal pairwise incomparable prime ideals of $T$ and suppose that, if $i \neq j$, then either $C_i = C_j$ or no element of $C_i$ is contained in an element of $C_j$. Then, there exists a countable excellent local domain $A \subseteq T$ with $\widehat{A} \cong T$ and, for all $i \in \mathbb{N}$ there is a nonzero prime element $p_i$ of $A$ such that all elements of $C_i$ are in the formal fiber of $A$ at $p_iA$ if and only if $T$ is equidimensional and there exists a set of nonzero elements $\mathfrak{q} = \{q_i\}_{i = 1}^{\infty}$ of $T$ satisfying the following conditions
    \begin{enumerate}[(i)]
        \item For $i \in \mathbb{N}$ we have $q_i \in \bigcap_{Q \in C_i}Q$ and, if $q_jT \neq q_iT$ and $Q' \in C_j$, then $q_i \not\in Q'$,
        \item $P \cap \Pi[\mathfrak{q}]=(0)$ for all $P \in \Ass(T)$,
        \item If $i \in \mathbb{N}$ and $Q \in C_i$ or $Q \in \Ass(T/q_iT)$, then $F_{\Pi[\mathfrak{q}]}\cap Q \subseteq q_iT$, 
        \item $T/M$ is countable,
        \item If $q_iT \neq q_jT$ and $P_i \in \Ass(T/q_iT)$, $P_j \in \Ass(T/q_jT)$, then $P_i \not\subseteq P_j$, and
        \item Suppose $\mathcal{C} = \{Q \in \Spec(T) \, | \, Q \in \bigcup_{i \in \mathbb{N}}C_i \mbox{ or } Q \in \Ass(T/q_iT) \mbox{ for some }i \in \mathbb{N} \}$ and $J \in \Spec(T)$ satisfies $J \subseteq Q$ for some $Q \in \mathcal{C}$. If $q_i \in J$ for some $i \in \mathbb{N}$ then $(T/q_iT)_J$ is a regular local ring, while if $q_i \not\in J$ for all $i \in \mathbb{N}$, then $T_J$ is a regular local ring.
    \end{enumerate}
\end{theorem}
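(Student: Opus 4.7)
The plan is to deduce Theorem \ref{glue countable excellent} from Theorem \ref{glue countable quasi} in essentially the same way that Theorem \ref{glue excellent} was deduced from Theorem \ref{glue quasi-excellent}. The key observation is that ``excellent'' equals ``quasi-excellent plus universally catenary,'' and for a local domain $A$, being universally catenary is equivalent to $\widehat{A}$ being equidimensional (Theorems \ref{Yu 2.7} and \ref{converse}). So the equidimensionality of $T$ is precisely the extra ingredient needed beyond the quasi-excellent case.

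For the forward direction, suppose $A \subseteq T$ is a countable excellent local domain with $\widehat{A} \cong T$ and prime elements $p_i \in A$ such that every element of $C_i$ lies in the formal fiber of $A$ at $p_iA$. Since $A$ is excellent, it is in particular quasi-excellent, so Theorem \ref{glue countable quasi} immediately gives the existence of $\mathfrak{q} = \{q_i\}_{i=1}^{\infty}$ (with $q_i = p_i$) satisfying conditions (i)--(vi). Since $A$ is universally catenary, Theorem \ref{converse} forces $T = \widehat{A}$ to be equidimensional.

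For the converse, assume $T$ is equidimensional and that $\mathfrak{q} = \{q_i\}_{i=1}^{\infty}$ satisfies conditions (i)--(vi). Apply Theorem \ref{glue countable quasi} to obtain a countable quasi-excellent local domain $A \subseteq T$ with $\widehat{A} \cong T$ and, for each $i \in \mathbb{N}$, a nonzero prime element $p_i$ of $A$ (namely $p_i = q_i$) such that every element of $C_i$ lies in the formal fiber of $A$ at $p_iA$. Since $\widehat{A} \cong T$ is equidimensional, Theorem \ref{Yu 2.7} gives that $A$ is universally catenary. A universally catenary quasi-excellent ring is excellent, so $A$ is the desired countable excellent precompletion.

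There is essentially no obstacle here; the whole result is a straightforward packaging of Theorem \ref{glue countable quasi} together with the equivalence between equidimensionality of the completion of a local domain and universal catenarity of the domain. The substantive work was already done in proving Theorem \ref{glue countable quasi}, and the only care needed is in verifying that the set of conditions (i)--(vi) in Theorem \ref{glue countable excellent} matches verbatim with that of Theorem \ref{glue countable quasi}, so that no additional conditions need to be re-derived when invoking it.
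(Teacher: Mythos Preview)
Your proof is correct and follows essentially the same approach as the paper: both directions reduce immediately to Theorem \ref{glue countable quasi}, with Theorems \ref{Yu 2.7} and \ref{converse} supplying the equivalence between equidimensionality of $T$ and universal catenarity of $A$.
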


\begin{proof}
Suppose there exists a countable excellent local domain $A \subseteq T$ with $\widehat{A} \cong T$ and, for all $i \in \mathbb{N}$ there is a nonzero prime element $p_i$ of $A$ such that all elements of $C_i$ are in the formal fiber of $A$ at $p_iA$. By Theorem \ref{glue countable quasi}, there is a set of nonzero elements $\mathfrak{q} = \{q_i\}_{i = 1}^{\infty}$ of $T$ that satisfy the six conditions given in the theorem. Since $A$ is universally catenary, $T$ is equidimensional by Theorem \ref{converse}.

Now suppose $T$ is equidimensional and there is a set of nonzero elements $\mathfrak{q} = \{q_i\}_{i = 1}^{\infty}$ of $T$ that satisfy the six conditions given in the theorem. By Theorem \ref{glue countable quasi}, there exists a countable quasi-excellent local domain $A \subseteq T$ with $\widehat{A} \cong T$ and, for all $i \in \mathbb{N}$ there is a nonzero prime element $p_i$ of $A$ such that all elements of $C_i$ are in the formal fiber of $A$ at $p_iA$. By Theorem \ref{Yu 2.7}, since $T$ is equidimensional, $A$ is universally catenary, and so $A$ is excellent.
\end{proof}

%\section{Acknowledgements}
%We thank Williams College SMALL REU, NSF Grant DMS2241623, and NSF Grant DMS1947438 for their generous funding and support throughout our research.

Note that the conditions of Theorem \ref{glue countable excellent} are satisfied for both Example \ref{Example1} and Example \ref{Example2} and so for those examples, there exists a countable excellent local domain $A \subseteq T$ with $\widehat{A} \cong T$ and $A$ contains prime elements $p_i$ for every $i \in \mathbb{N}$ such that all elements of $C_i$ are in the formal fiber of $A$ at $p_iA$.

\section{Acknowledgements}

We thank Williams College and the National Science Foundation, via NSF Grant DMS2241623, and NSF Grant DMS1947438 for their generous funding of our research. 

\newpage
% \printbibliography

\bibliographystyle{plain}
\bibliography{Bibliography}
\end{document}